\newtheorem{Thm}{Theorem}[section]
\newtheorem{Lem}[Thm]{Lemma}
\newtheorem{Prop}[Thm]{Proposition}
\newtheorem{Def}[Thm]{Definition}
\newtheorem{Rem}[Thm]{Remark}
\makeatletter \@addtoreset{equation}{section} \makeatother
\newcommand{\R}{\mathbb{R}}
\newcommand{\B}{\mathbb{B}}
\newcommand{\E}{\mathbb{E}}
\newcommand{\M}{\mathbb{M}}
\newcommand{\N}{\mathbb{N}}
\begin{document}
\title{Existence of multiple solutions for quasi-linear degenerate elliptic equations}
\author{Yawei Wei\footnote{Yawei Wei, School of Mathematical Sciences and LPMC, Nankai University, Tianjin 300071, China; e-mail: weiyawei@nankai.edu.cn}}

\pagestyle{headings}
% \sloppy
\date{}
\maketitle

\begin{abstract}
The present paper is concerned a class of quasi-linear elliptic degenerate equations. The degenerate operator comes from the analysis of manifolds with corner singularity. Variational methods are applied to verify the existence of infinity many solutions for the problems.
\footnote{Acknowledgements: This article has been supported by the NSFC (National Science Foundation of China) under the grands 11771218, 11371282, and supported by the Fundamental Research Funds for the Central Universities.}
\end{abstract}

%\tableofcontents
%\addcontentsline{toc}{section}{References}
%\newpage
%\section*{Introduction}
\addcontentsline{toc}{section}{Introduction}
      %\markboth{INTRODUCTION}{INTRODUCTION}
%
%
\section{Introduction}

In this paper, the following quasi-linear degenerate elliptic equations is concerned
\begin{equation}\label{1-1}
\bigg\{\begin{array}{ll}
-(x_1 x_2)^{-p} \textup{div}_{\M}(|\nabla_{\M}u|^{p-2}\nabla_{\M}u)=\lambda|u|^{q-2}u,&
\textup{in}\,\, \textup{int}\mathbb{M}\\
u=0& \textup{on}\,\,\partial \mathbb{M}.\end{array}
\end{equation}
where $\lambda >0$, $2<p<N$ and $p\leq q <p^*=\frac{Np}{N-p}$. A local model of stretched manifold with corner singularity is denoted by $\M:=(0,\delta)\times (0,\delta)\times X$, with the fixed small positive $\delta$ and dimension $N=n+2,$ and $\partial \M:= \{0\} \times \{0\}\times X$ denotes the boundary of $\M$, where $X$ is a bounded open set in the unit sphere of $\R^{N-2}$
with $x^\prime:=(x_1^\prime,...,x_n^\prime)\in X$,  $\nabla_\M:=(x_1\partial_{x_1},x_1x_2\partial_{x_2},\partial_{x_1^\prime},...,\partial_{x^\prime_n},)$, and $\mbox{div}_{\M}:=\nabla_{\M}\cdot$.

The non-trivial solutions $u \in \mathcal{H}^{1,(\frac{N-1}{p},\frac{N}{p})}_{p,0}(\M)$ (see the Definition \ref{defM}) verifies \eqref{1-1}
in the weak sense, i.e., for any $\varphi \in C_0^\infty(\mbox{int}\M)$, it holds that
\begin{equation}\label{weakdef}
\int_\M x_1|\nabla_\M u|^{p-2} \nabla_\M u\cdot \nabla_\M\varphi \frac{dx_1}{x_1}\frac{dx_2}{x_1x_2}dx^\prime
  =\lambda \int_\M x_1 (x_1x_2)^p|u|^{q-2} u \varphi\frac{dx_1}{x_1}\frac{dx_2}{x_1x_2}dx^\prime
\end{equation}

In the following calculus, for simplicity, denote $d\sigma:= \frac{dx_1}{x_1}\frac{dx_2}{x_1x_2}dx^\prime$. The weak solutions for \eqref{1-1} are the
critical points of the energy functional
\[
  J(u)=\frac{1}{p}\int_\M x_1|\nabla_\M u|^p d\sigma -\frac{\lambda}{q}\int_\M x_1(x_1x_2)^p |u|^q d\sigma.
\]

The present paper holds the following results.
\begin{Thm}\label{T2}
For $2<p<N$, $p<q <p^*$, and $\lambda>0$ the Dirichlet problem \eqref{1-1} processes infinitely many non-trivial weak solutions in the sense of \eqref{weakdef}.
\end{Thm}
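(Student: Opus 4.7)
The plan is to apply the Fountain Theorem of Bartsch (equivalently, the symmetric version of the Mountain Pass theorem) to the energy functional $J$ on the weighted Sobolev space $E := \mathcal{H}^{1,(\frac{N-1}{p},\frac{N}{p})}_{p,0}(\M)$. What must be verified are: $J \in C^{1}(E,\R)$ with $J$ even and $J(0)=0$; the Palais--Smale condition $(PS)_c$ at every level $c$; and the linking inequalities associated to a decomposition $E = Y_k \oplus Z_k$ of $E$. First I would check that $J$ is well defined, of class $C^1$, and even on $E$. Evenness is immediate from the $|\cdot|^p$ and $|\cdot|^q$ structure, and the $C^1$ regularity rests on the continuous embedding of $E$ into the weighted Lebesgue space $L^q(\M;\,x_1(x_1x_2)^p\,d\sigma)$ for $p \le q < p^{\ast}$, which I take as available from the functional framework for $\M$.

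Second, I would verify $(PS)_c$. If $\{u_n\} \subset E$ satisfies $J(u_n)\to c$ and $J'(u_n)\to 0$, then
\[
J(u_n) - \frac{1}{q}\langle J'(u_n),u_n\rangle = \Big(\frac{1}{p}-\frac{1}{q}\Big)\int_{\M} x_1|\nabla_\M u_n|^p\,d\sigma,
\]
and $p<q$ forces $\|u_n\|_E$ bounded. Passing to a subsequence, $u_n \rightharpoonup u$ in $E$. The \emph{compact} embedding $E \hookrightarrow L^q(\M;\,x_1(x_1x_2)^p\,d\sigma)$ then yields strong convergence of the nonlinear term $\int_{\M} x_1(x_1x_2)^p|u_n|^{q-2}u_n\varphi\,d\sigma$, and a standard $(S_+)$-type argument for the degenerate $p$-operator promotes weak convergence to norm convergence in $E$.

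Third, I would set up the linking geometry. Fix a Schauder basis $\{e_j\}_{j\ge 1}$ of $E$, set $Y_k := \mathrm{span}\{e_1,\dots,e_k\}$ and let $Z_k$ be the closure in $E$ of $\mathrm{span}\{e_j : j \ge k\}$. On the finite-dimensional $Y_k$ all norms are equivalent, so since $q>p$, $J(u) \to -\infty$ as $\|u\|_E \to \infty$ on $Y_k$; for $\rho_k$ large enough,
\[
a_k := \max_{u \in Y_k,\ \|u\|_E = \rho_k} J(u) \le 0.
\]
For the complementary direction, set $\beta_k := \sup\{\|u\|_{L^q(\M;\,x_1(x_1x_2)^p d\sigma)} : u \in Z_k,\ \|u\|_E = 1\}$. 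Compactness of the embedding implies $\beta_k \downarrow 0$ as $k \to \infty$, and choosing $r_k$ of order $\beta_k^{-q/(q-p)}$ gives
\[
b_k := \inf_{u \in Z_k,\ \|u\|_E = r_k} J(u) \longrightarrow +\infty.
\]
The Fountain Theorem then produces an unbounded sequence of critical values $c_k \to \infty$; since $J(0)=0$, the associated critical points are non-trivial and yield infinitely many weak solutions of \eqref{1-1} in the sense of \eqref{weakdef}.

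The main obstacle is the compact weighted embedding $E \hookrightarrow L^q(\M;\,x_1(x_1x_2)^p\,d\sigma)$ on the corner-singular manifold: the degenerate weights near the edges $x_1=0$ and $x_2=0$, together with the Dirichlet condition on $\partial\M$, must yield enough control to run a Rellich--Kondrachov-type argument adapted to the corner geometry. Once this compactness is established, together with the $(S_+)$ property of the degenerate $p$-operator $-(x_1x_2)^{-p}\mathrm{div}_\M(|\nabla_\M u|^{p-2}\nabla_\M u)$, the remaining steps are the routine variational template sketched above.
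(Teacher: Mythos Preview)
Your proposal is correct and the analytic core is the same as the paper's: the compact embedding $\mathcal{H}^{1,(\frac{N-1}{p},\frac{N}{p})}_{p,0}(\M)\hookrightarrow L_q^{(\frac{N-p-1}{q},\frac{N-p}{q})}(\M)$ (which the paper establishes as its Lemma~3.1 via isomorphisms $S_{l,(\gamma_1,\gamma_2)}$ to classical Sobolev spaces) and the resulting Palais--Smale condition. The difference lies in the abstract critical-point machinery. The paper invokes the Ambrosetti--Rabinowitz symmetric mountain pass framework (its Lemmas~2.8--2.9), checking conditions $(I_1)$--$(I_5)$ and defining critical values $b_m$ and $c_m$ via genus and the classes $\Gamma$, $\Gamma_m$; the divergence $c_m\to\infty$ is then proved separately as Theorem~1.2 using an auxiliary Nehari-type manifold. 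Your route through Bartsch's Fountain Theorem replaces the genus conditions by the linking inequalities on $Y_k$, $Z_k$, and yields the unbounded sequence $c_k\to\infty$ in a single stroke, effectively merging the paper's Theorems~1.1 and~1.2. Two minor points: the Fountain Theorem is usually stated for separable reflexive $E$ with a biorthogonal system rather than a genuine Schauder basis, so you should phrase the construction of $Z_k$ accordingly; and your $(S_+)$ step, while standard, is where the paper spends most of its effort (its Lemma~3.4 combines a Br\'ezis--Lieb lemma with an Egorov-type argument), so you should indicate why the degenerate operator $u\mapsto -(x_1x_2)^{-p}\mathrm{div}_\M(|\nabla_\M u|^{p-2}\nabla_\M u)$ indeed has this property in the weighted setting.
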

\begin{Thm}\label{geqC1}
If $\{c_m\}_{m\in \N}$ is the critical value sequence obtained in Theorem \ref{T2}, then we have
$c_m \to \infty \quad \textup{as} \quad m \to \infty.$
\end{Thm}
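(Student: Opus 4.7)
The conclusion $c_m \to \infty$ is essentially built into the scheme that produces the $c_m$ in Theorem \ref{T2}. The natural setting for an even functional $J$ with $J(0)=0$ is the Fountain Theorem of Bartsch, so I assume Theorem \ref{T2} is proved that way (a Symmetric Mountain Pass argument with Krasnoselskii genus would work analogously). Write $E := \mathcal{H}^{1,(\frac{N-1}{p},\frac{N}{p})}_{p,0}(\M)$ with norm $\|u\|_E^p := \int_\M x_1 |\nabla_\M u|^p \, d\sigma$, and fix a Schauder decomposition $E = \overline{\bigoplus_{j \ge 1} X_j}$ with $\dim X_j < \infty$. Setting $Y_k := \bigoplus_{j \le k} X_j$ and $Z_k := \overline{\bigoplus_{j \ge k} X_j}$, the Fountain construction gives
\[
c_m = \inf_{\gamma \in \Gamma_m} \max_{u \in B_m} J(\gamma(u)), \qquad B_m := \{u \in Y_m : \|u\|_E \le R_m\},
\]
and guarantees $c_m \ge \inf\{J(u) : u \in Z_m,\ \|u\|_E = r_m\}$ for suitable radii $r_m$.

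The key quantity is
\[
\beta_k := \sup_{u \in Z_k,\, \|u\|_E = 1} \Bigl( \int_\M x_1 (x_1 x_2)^p |u|^q \, d\sigma \Bigr)^{1/q},
\]
and the heart of the argument is the claim $\beta_k \to 0$. This is a standard consequence of compactness of the weighted embedding $E \hookrightarrow L^q(\M;\, x_1(x_1 x_2)^p \, d\sigma)$: if $u_k \in Z_k$ is a norm-$1$ sequence that almost achieves the supremum, a weakly convergent subsequence $u_k \rightharpoonup u_\ast$ must satisfy $u_\ast = 0$, since every coordinate of the basis eventually annihilates the tail, and compactness then upgrades the weak convergence to strong convergence in the weighted $L^q$, collapsing $\beta_k$. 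For $u \in Z_k$ with $\|u\|_E = r$ the functional obeys
\[
J(u) \ge \frac{r^p}{p} - \frac{\lambda}{q}\, \beta_k^q\, r^q,
\]
and the optimal radius $r_k := (\lambda \beta_k^q)^{1/(p-q)}$ (well defined since $q > p$, and diverging as $\beta_k \to 0$) yields
\[
c_m \;\ge\; \inf_{u \in Z_m,\,\|u\|_E = r_m} J(u) \;\ge\; \Bigl( \frac{1}{p} - \frac{1}{q} \Bigr)\, (\lambda \beta_m^q)^{p/(p-q)} \;\longrightarrow\; \infty.
\]

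The real obstacle is not the variational bookkeeping but the compactness of the weighted embedding $E \hookrightarrow L^q(\M;\, x_1(x_1 x_2)^p\, d\sigma)$ for $p < q < p^*$: the weights $x_1$ and $x_1 x_2$ degenerate on $\partial \M = \{0\} \times \{0\} \times X$, so direct Rellich--Kondrachov is unavailable and one must work through the Mellin / cone-Sobolev framework that defines $E$. This compactness is, however, required anyway to verify the Palais--Smale condition for Theorem \ref{T2} and so is in hand at this stage of the paper; the estimates above then close the argument.
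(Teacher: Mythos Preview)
Your core mechanism is correct and coincides with the paper's: on the tail complements the best constant of the compact embedding $E \hookrightarrow L^q(\M;\,x_1(x_1x_2)^p\,d\sigma)$ tends to zero, and this forces the functional, evaluated on a well-chosen sphere in the tail, to diverge. Your $\beta_m \to 0$ is exactly the paper's $d_m \to \infty$, where $d_m := \inf\{\|u\|_E : u \in M \cap E_m^c\}$ with $M$ the Nehari manifold $\{u\neq 0:\|u\|_E^p=\lambda\int_\M x_1(x_1x_2)^p|u|^q\,d\sigma\}$; one line of algebra gives $d_m \ge (\lambda\beta_m^q)^{1/(p-q)}$, which is precisely your optimal radius $r_m$, and both proofs establish this divergence by the same ``weak limit in the tail is zero, compactness upgrades to strong $L^q$ convergence'' argument.

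The packaging, however, is genuinely different. The paper does \emph{not} use the Fountain Theorem: the $c_m$ are those of Ambrosetti--Rabinowitz (Lemma~\ref{AR2}),
\[
c_m \;=\; \sup_{h\in\Gamma}\,\inf_{u\in\partial B\cap E_{m-1}^c} J(h(u)),
\]
so to bound $c_m$ from below one must \emph{exhibit} an admissible $h\in\Gamma$, not invoke a built-in linking inequality. The paper takes $h_m(u)=R^{-1}d_m\,u$ on $E_m^c$, uses the Nehari calculus (Lemma~\ref{betau}) to check $h_m(B\cap E_m^c)\subset A_0=\{J\ge 0\}$, extends to $\widetilde{h_m}\in\Gamma$ by sending the finite-dimensional piece $E_m$ near the origin, and then computes $J(\widetilde{h_m}(u))\ge \tfrac{1}{2p}(R^{-1}d_m)^p\to\infty$ for $u\in\partial B\cap E_m^c$. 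Your Fountain route is cleaner and bypasses this construction entirely, but it presupposes a different definition of $c_m$ than the one actually in force; to fit the paper as written you would still have to produce the scaling map and verify it lies in $\Gamma$, which is exactly the extra work the paper does.
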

The problem \eqref{1-1} with $p<q<p^*$ holding different homogeneity of the right hand side preserves a curve of solution. In fact, if $u\neq 0$, is a solution of the problem \eqref{1-1} with $\lambda=1$, then for any $\alpha>0$, $\alpha u$ verifies the problem \eqref{1-1} with $\lambda=\alpha^{p-q}$. But for the case
of $p=q$, if $(u,\lambda)$ is a solution of \eqref{1-1}, then for all $\alpha\in \R$, $(\alpha u,\lambda)$ is a solution too. Hence, we need different methods to solve the problem in the two cases. We call the problem \eqref{1-1} with $p=q$ the typical Dirichlet eigenvalue problem, which holds the following results.
\begin{Thm}\label{T1}
For $2<p<N$ and $q = p$, the Dirichlet eigenvalue problem \eqref{1-1} processes a sequence of infinitely many non-trivial weak solutions $(u_k,\lambda_k)\in \mathcal{H}^{1,(\frac{N-1}{p},\frac{N}{p})}_{p,0}(\M)\times \R_+$
in the sense of \eqref{weakdef}.
\end{Thm}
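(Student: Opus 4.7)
The plan is to reformulate Theorem~\ref{T1} as a constrained critical-point problem and apply Ljusternik--Schnirelmann theory with the Krasnoselskii genus, which is the standard route to infinitely many eigenpairs for a $p$-homogeneous operator. Set $E := \mathcal{H}^{1,(\frac{N-1}{p},\frac{N}{p})}_{p,0}(\M)$ and introduce the even $C^1$ functionals
\[
\Phi(u) := \tfrac{1}{p}\int_\M x_1|\nabla_\M u|^p\,d\sigma, \qquad \Psi(u) := \tfrac{1}{p}\int_\M x_1(x_1x_2)^p|u|^p\,d\sigma,
\]
both $p$-homogeneous on $E$. The weak formulation \eqref{weakdef} with $q=p$ is exactly $\Phi'(u) = \lambda\,\Psi'(u)$, so eigenpairs $(u,\lambda)$ correspond to critical points of $\Phi$ restricted to the symmetric $C^1$ Banach manifold $\mathcal{S} := \{u\in E : \Psi(u)=1\}$, with $\lambda$ arising as the Lagrange multiplier.

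For each $k\in\N$ I would set
\[
\lambda_k := \inf_{A\in\Gamma_k}\,\sup_{u\in A}\,p\,\Phi(u),
\]
where $\Gamma_k := \{A\subset \mathcal{S} : A \text{ closed, symmetric},\ \gamma(A)\geq k\}$ and $\gamma$ denotes the Krasnoselskii genus. The classical Ljusternik--Schnirelmann argument (the odd deformation lemma applied to the even functional $\Phi$ on $\mathcal{S}$) then produces critical values $0<\lambda_1\leq\lambda_2\leq\cdots$, each realised by a non-trivial eigenfunction $u_k\in E$. Strict positivity $\lambda_1>0$ follows from a Poincar\'e-type estimate bounding $\Psi$ by $\Phi$ on $E$, so that the pairs $(u_k,\lambda_k)$ sit in $E\times\R_+$ as required.

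The main technical obstacle is the Palais--Smale condition for $\Phi|_{\mathcal{S}}$, whose heart is a Rellich-type compact embedding
\[
E \;\hookrightarrow\hookrightarrow\; L^p\bigl(\M;\,x_1(x_1x_2)^p\,d\sigma\bigr),
\]
which in the corner-manifold setting I would establish (or import from the proof of Theorem~\ref{T2}) through a cut-off and partition-of-unity argument: near the singular set $\{x_1=0\}\cup\{x_2=0\}$ a Hardy-type bound absorbs the weight, while away from it classical Rellich compactness applies. Granted this compactness, any bounded $(\mathrm{PS})_c$ sequence $\{u_n\}\subset\mathcal{S}$ admits a subsequence with $u_n\rightharpoonup u$ in $E$ and $u_n\to u$ strongly in the weighted $L^p$; the $(S_+)$-property of the $p$-Laplacian-type operator $\Phi'$ then upgrades weak to strong convergence in $E$, closing the verification. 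Finally, the unboundedness $\lambda_k\to\infty$ is a standard consequence of the genus-based min-max together with the same compact embedding, in the usual Ljusternik--Schnirelmann manner.
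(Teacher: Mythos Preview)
Your proposal is correct and follows the same Ljusternik--Schnirelmann/genus strategy as the paper, with one harmless but notable dual twist: you constrain to the weighted $L^p$-sphere $\mathcal{S}=\{\Psi=1\}$ and minimise the Dirichlet energy $\Phi$, whereas the paper constrains to the Dirichlet sphere $M=\{\Phi=\alpha\}$ and maximises the weighted mass $B=\Psi$, setting $\beta_k=\sup_{A\in\mathcal{A}_k}\min_{u\in A}B(u)$ and then $\lambda_k=\alpha/\beta_k$. By $p$-homogeneity the two formulations are equivalent and yield the same eigenvalue sequence; the compact embedding you need is exactly the paper's Lemma~\ref{compact} with $q=p$ and weights $(\tfrac{N-p-1}{p},\tfrac{N-p}{p})$, and your appeal to the $(S_+)$ property is what the paper carries out by hand in Lemma~\ref{PScond}. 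The only practical difference is that the paper, working on the norm sphere, must build the deformation flow explicitly (Lemmas~\ref{rus}--\ref{deform}) via a duality map and tangent projection, while your choice of constraint lets you invoke a more off-the-shelf deformation lemma since $\Phi$ is coercive on $\mathcal{S}$; conversely, the paper's constraint manifold is automatically bounded, which slightly streamlines the Palais--Smale verification.
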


\begin{Thm}\label{C1}
The eigenvalues $\lambda_k$ of \eqref{1-1} in Theorem \ref{T1} turns to infinity as $k \to \infty$.
\end{Thm}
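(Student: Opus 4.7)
The plan is to argue by contradiction, using the min-max characterization of the $\lambda_k$ produced by the Ljusternik--Schnirelman construction behind Theorem~\ref{T1}, together with the compact embedding $\mathcal{H}:=\mathcal{H}^{1,(\frac{N-1}{p},\frac{N}{p})}_{p,0}(\M)\hookrightarrow L^p(\M;x_1(x_1x_2)^p d\sigma)$. This compactness is precisely the Palais--Smale ingredient driving the eigenvalue construction in Theorem~\ref{T1}, so it can be taken for granted here.

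First I would record the variational formula. The eigenvalues $\lambda_k$ arise as constrained critical values of $u\mapsto \int_\M x_1|\nabla_\M u|^p d\sigma$ on the sphere $\mathcal{S}=\{u\in\mathcal{H}: \int_\M x_1(x_1x_2)^p|u|^p d\sigma=1\}$, and the $k$-th eigenvalue admits the Krasnoselskii-genus min-max representation
\[
\lambda_k=\inf_{A\in\Sigma_k}\sup_{u\in A}\int_\M x_1|\nabla_\M u|^p d\sigma,\qquad \Sigma_k:=\{A\subset\mathcal{S}\text{ closed, symmetric, }\gamma(A)\ge k\}.
\]
Suppose, toward a contradiction, that $\lambda_k\le M$ for every $k$. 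For each $k$ I would select $A_k\in\Sigma_k$ with $\sup_{A_k}\int_\M x_1|\nabla_\M u|^p d\sigma\le M+1$; then every such $A_k$ is contained in the single set
\[
B:=\Big\{u\in\mathcal{S}:\int_\M x_1|\nabla_\M u|^p d\sigma\le M+1\Big\},
\]
which is norm-bounded in $\mathcal{H}$, independently of $k$.

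The remainder is topological. The compact embedding makes the closure $K$ of $B$ in the weighted $L^p$-topology compact and symmetric, with $0\notin K$ because $B\subset\mathcal{S}$. A standard property of the Krasnoselskii genus then gives $\gamma(K)<\infty$; since the inclusion $A_k\hookrightarrow K$ is odd and continuous (continuity being that of the embedding), one obtains $\gamma(A_k)\le\gamma(K)<\infty$ uniformly in $k$, contradicting $\gamma(A_k)\ge k\to\infty$. Hence $\lambda_k\to\infty$. The delicate point I expect is the topological transfer between the Sobolev-type norm in which the genus of $A_k$ is computed and the weaker weighted $L^p$-norm in which $K$ is compact; once this is justified, the argument is purely topological and uses no further information about the degeneracy of the operator or the corner geometry of $\M$.
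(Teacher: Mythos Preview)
Your argument is correct and essentially self-contained once one observes that the ``delicate point'' you flag is in fact harmless: any odd map $h:K\to\R^m\setminus\{0\}$ that is continuous for the weighted $L^p$-topology is automatically continuous for the stronger $\mathcal{H}$-topology on $A_k$, so the genus inequality $\gamma(A_k)\le\gamma(K)$ follows directly from the monotonicity property (Proposition~\ref{genusp}(1)). Your min--max formula for $\lambda_k$ agrees with the paper's, since the radial projection between the Sobolev sphere $M$ of \eqref{M} and your $L^p$-sphere $\mathcal{S}$ is an odd homeomorphism, turning $\beta_k=\alpha/\lambda_k$ into your $\inf$--$\sup$.

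The paper proceeds differently. Rather than invoking the finite genus of a compact symmetric set away from the origin, it introduces an increasing chain of finite-dimensional subspaces $E_k$ with dense union and the auxiliary quantities $\tilde\beta_k=\sup_{A\in\mathcal{A}_k}\inf_{u\in A\cap E_{k-1}^c}B(u)\ge\beta_k$. Assuming $\tilde\beta_k\not\to0$, one extracts $u_k\in M\cap E_{k-1}^c$ with $B(u_k)>\gamma>0$; boundedness in $\mathcal{H}$ plus the compact embedding force $u_k\to v$ in the weighted $L^p$-space, while $u_k\in E_{k-1}^c$ and density of $\bigcup E_k$ force the weak limit $v=0$, contradicting $B(v)>\gamma$. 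Both proofs hinge on the same compact embedding (Lemma~\ref{compact}); yours is a cleaner purely topological shortcut, while the paper's argument mirrors the $E_m^c$-machinery already set up for Theorem~\ref{geqC1} and avoids the change-of-topology step altogether.
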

The classical  $p$-Laplacian have been widely studied, such as
\cite{Car}, \cite{Dra}, \cite{Garc} and references therein.
The quasi-linear degenerate operator in \eqref{1-1} comes from the analysis of domain with corner singularities. This academic field has been discussed from various perspectives such as V.Maz'ya \cite{Kozlov}, P.Grisvard \cite{Grisvard}, M.Dauge \cite{Dauge} and R. Melrose \cite{Melr1}. This paper is based on the framework by B.-W. Schulze \cite{Schu20}, and organized as follows. The preliminaries are given in section 2, including definitions and properties of weighted Sobolev spaces, etc. In section 3, some abstract variational methods is applied to verify the problem \eqref{1-1} in the case of $p<q<p^*$. The idea of Lusternik-Schnirelman theory is employed to prove the case of $p=q$ of \eqref{1-1} in section 4.

\section{Preliminaries}

Let $X$ be a bounded open subset in the unit sphere of $\R^n$. Define an infinite cone in $\R^{n+1}$ as a quotient space
$X^\Delta=(\overline{\R}_+\times X)/(\{0\}\times X)$, and the stretched cone is defined as
$X^\wedge=\R_+\times X.$
Set $x_1\in \R_+$, $x^\prime=(x_1^\prime, ..., x_n^\prime)\in X$.
It is sufficient to consider the case of $x_1$ near to $0$, which gives us a finite cone $E=([0,\delta)\times X)/(\{0\}\times X)$ with a small fixed $\delta$. The finite stretched cone to $E$ is $\E=(0,\delta)\times X$,
with the boundary $\partial \E=\{0\}\times X$.
Then an infinite corner can be defined as
$E^\Delta=( \overline{\R}_+\times E)/(\{0\}\times E)$,
and the stretched corner is
$E^\wedge=\R_+\times \E$.
Let $(x_1,x_2,x^\prime)\in E^\wedge$, we focus on the case of $x_2$ small enough, then the finite corner is
$M=([0,\delta)\times E)/(\{0\}\times E)$ and
$\M=(0,\delta)\times \E=(0,\delta)\times (0,\delta)\times X$ denotes a finite stretched corner with the boundary
$\partial \M=\{0\} \times \partial\E=\{0\}\times \{0\}\times X$.

\begin{Def}\label{WLP}
 Let $(x_1,x_2,x^\prime)\in \R_+\times X^\wedge$, with the weight datas $\gamma_1\in \R$, $\gamma_2\in \R$
and $1\leq p<+\infty$.  Then $L_p^{\gamma_1,\gamma_2}(\R_+\times X^\wedge)$
denotes the space of all $u(x)\in \mathcal{D}^\prime(\R_+\times X^\wedge)$ such that
\[\|u\|_{L_p^{\gamma_1,\gamma_2}(\R_+\times X^\wedge)}=\big(\int_{\R_+\times X^\wedge} |x_1^{\frac{N}{p}-\gamma_1}x_2^{\frac{N}{p}-\gamma_2} u(x)|^p d\sigma\big)^{1/p}<+\infty.\](Here and after we denote $d\sigma:= \frac{dx_1}{x_1}\frac{dx_2}{x_1x_2}dx^\prime$ for simplicity.)
The weighted Sobolev spaces are defined as follows
\[
\mathcal{H}^{m,(\gamma_1,\gamma_2)}_p(\R_+\times X^\wedge):=\{u\in\mathcal{D}^\prime(\R_+\times X^\wedge):
 (x_1\partial_{x_1})^l (x_1x_2\partial_{x_2})^j \partial_{x^\prime}^\beta u \in L_p^{\gamma_1,\gamma_2}(\R_+\times X^\wedge)\},
\]
for arbitrary $j ,l \in \N$, $\beta\in \N^{N-2}$, and $j+l+|\beta|\leq m$.
\end{Def}

\begin{Def}\label{defM}
Let $W^{m,p}_{loc}(\textup{int}\M)$ denote the classical local Sobolev space (here $\textup{int}\M$ is interior of $\M$).
For $1\leq p <\infty$, $m\in \N$ and the weighted data $\gamma_1 \in\R$, $\gamma_2 \in\R$
then $\mathcal{H}_{p}^{m,(\gamma_1,\gamma_2)}(\mathbb{M})$
denotes the subspace of all $u\in
W^{m,p}_{\textup{loc}}(\textup{int}\mathbb{M})$, such that
\[
\mathcal{H}_{p}^{m,(\gamma_1,\gamma_2)}(\mathbb{M})=\{u\in
W^{m,p}_{\textup{loc}}(\textup{int}\mathbb{M})~|~ (\omega\sigma)
u\in\mathcal{H}_{p}^{m,(\gamma_1,\gamma_2)}(\R_+\times X^\wedge)\}
\]
for any cut-off functions $\omega = \omega(x_1,x^\prime)$ and $\sigma = \sigma(x_2,x^\prime)$,
supported by a collar neighborhoods of $(0,1)\times \partial\M$ and $(0,1)\times\partial\M$ respectively.
Moreover, define $L_p^{\gamma_1,\gamma_2}(\mathbb{M}):=\mathcal{H}_{p}^{0,(\gamma_1,\gamma_2)}(\mathbb{M}).$
\end{Def}
\begin{Rem}
Although the definitions of weighted Sobolev spaces on manifolds with corner singularity are complex (see more in \cite{SchuWei}), Definitions \ref{WLP} and \ref{defM} fit the present problem \eqref{1-1}. Here since this paper concentrates on $\M=(0,\delta)\times (0,\delta)\times X$ with small enough positive $\delta$, it sufficient to consider the case in the support of $\omega$ and $\sigma$ in the definition \ref{defM}.  Moreover, let $\mathcal{H}^{m,(\gamma_1,\gamma_2)}_{p,0}(\M)$ denote the closure of $C_0^\infty$ in $\mathcal{H}^{m,(\gamma_1,\gamma_2)}_p(\M)$.

\end{Rem}

\begin{Prop}\label{p-ineq}
Let $1\leq p< \infty$ and $\gamma_1,\gamma_2\in \R$. If
$u(x)\in \mathcal{H}^{1,(\gamma_1,\gamma_2)}_{p,0}(\mathbb{M})$, then
\begin{equation}\label{p}
\| u(x)\|_{L_p^{\gamma_1,\gamma_2}(\mathbb{M})}\leq c \|\nabla_\M
u(t,x)\|_{L_p^{\gamma_1,\gamma_2}(\mathbb{M})},
\end{equation}
where the constant $c$ depends only on $\mathbb{M}$ and $p$.
\end{Prop}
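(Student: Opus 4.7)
The plan is to reduce \eqref{p} to a pointwise (in $(x_1,x_2)$) application of the classical Poincar\'e inequality on the bounded cross-section $X$. The crucial observation is that the weight $x_1^{N/p-\gamma_1}x_2^{N/p-\gamma_2}$ depends only on the radial variables $(x_1,x_2)$ and therefore commutes with any integration in $x'$.

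Since $\mathcal{H}^{1,(\gamma_1,\gamma_2)}_{p,0}(\mathbb{M})$ is by definition the closure of $C_0^\infty(\textup{int}\,\mathbb{M})$ in the weighted Sobolev norm, it suffices to establish \eqref{p} for $u\in C_0^\infty(\textup{int}\,\mathbb{M})$ and pass to the limit. For every fixed $(x_1,x_2)\in(0,\delta)^2$ the slice $u(x_1,x_2,\cdot)$ lies in $C_0^\infty(X)$, so the standard $p$-Poincar\'e inequality on the bounded open set $X$ provides a constant $C_X$, depending only on $X$ and $p$, with
\begin{equation*}
\int_X |u(x_1,x_2,x')|^p\,dx' \le C_X \int_X |\nabla_{x'}u(x_1,x_2,x')|^p\,dx'.
\end{equation*}
Since the last $n$ components of $\nabla_{\mathbb{M}}$ are exactly $\partial_{x'_1},\dots,\partial_{x'_n}$, one has $|\nabla_{x'}u|\le|\nabla_{\mathbb{M}}u|$ pointwise. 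Multiplying the display above by the $x'$-independent factor $x_1^{N-p\gamma_1}x_2^{N-p\gamma_2}$ and integrating against $\tfrac{dx_1}{x_1}\tfrac{dx_2}{x_1x_2}$ over $(0,\delta)^2$ then yields \eqref{p} with $c=C_X^{1/p}$.

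The only conceptual trap I would flag is the temptation to try to generate the Poincar\'e constant from the radial fields $x_1\partial_{x_1}$ or $x_1x_2\partial_{x_2}$ via a Hardy-type estimate in $x_1$ or $x_2$. At the weight exponents $\gamma_1=(N-1)/p$, $\gamma_2=N/p$ relevant to the main theorems, the change of variables $t_i=-\log x_i$ transforms the induced one-dimensional weighted measures into plain Lebesgue measure on a half-line, where no scale-invariant Poincar\'e inequality is available. Thus the tangential direction $x'$, together with the zero trace on $\partial X$ built into the closure of $C_0^\infty$ and the boundedness of $X$, is the decisive --- and essentially only --- ingredient the argument really exploits.
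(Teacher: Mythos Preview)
Your argument is correct. The slicing reduction to the standard $p$-Poincar\'e inequality on the bounded cross-section $X$ works exactly as you describe: since every $u\in C_0^\infty(\textup{int}\,\mathbb{M})$ has compactly supported slices $u(x_1,x_2,\cdot)\in C_0^\infty(X)$, and since the weight $x_1^{N-p\gamma_1}x_2^{N-p\gamma_2}$ is constant in $x'$, the inner Poincar\'e estimate pulls through the outer weighted integration verbatim. The resulting constant $C_X^{1/p}$ depends only on $X$ and $p$, i.e.\ on $\mathbb{M}$ and $p$, matching the statement.

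As for comparison: the paper does not supply its own proof here but simply refers to Proposition~3.2 of \cite{CLW4}, so a line-by-line comparison is not possible. Your proof has the merit of being fully self-contained and of making transparent \emph{which} direction carries the Poincar\'e constant. Your closing remark is also well taken: at the weights $\gamma_1=(N-1)/p$, $\gamma_2=N/p$ used in the main theorems, the substitution $x_i\mapsto -\log x_i$ removes the weight in the radial variables and leaves a half-line with translation-invariant measure, where no uniform Poincar\'e bound from $x_1\partial_{x_1}$ or $x_1x_2\partial_{x_2}$ alone can hold; the compact-support condition in the $x'$-direction is genuinely what drives \eqref{p}.
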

\begin{proof}
Follow the same process of Proposition 3.2 in \cite{CLW4}.
\end{proof}
\begin{Rem}
The proposition \ref{p-ineq} implies that the norm $\|u\|_{\mathcal{H}^{1,(\gamma_1,\gamma_2)}_{p,0}(\M)}$ is equivalent to the norm $\|\nabla_{\M} u\|_{L^{\gamma_1,\gamma_2}_p(\M)}$.
\end{Rem}

Next we introduce
some concepts in variational methods in the following. Let $E$ be Banach space.
\begin{Def}\label{ps}
The functional $I$ satisfies the $($PS$){}_c$ condition, if for any
sequence $\{u_k\}\subset E$ with the properties:
$$I(u_k)\to c\quad \textup{and}\quad \parallel I^\prime(u_k)\parallel_{E^\prime}\to
0,$$ there exists a subsequence which is convergent, where
$I^\prime(\cdot)$ is the Fr\'echet differentiation of $I$ and
$E^\prime$ is the dual space of $E$. If it holds for any $c\in
\R$, we say that $I$ satisfies $($PS$)$ condition.
\end{Def}

\begin{Def}\label{genus}
Define the class in $E$
\[
  \Sigma(E)=\{ A\subset E~|~ A\,\,\,is\,\,\, closed,\,\,\, and\,\,\, A=-A \}.
\]For $A\in \Sigma(E)$, define the genus of $A$, denoted by $\gamma(A)$, as
\begin{displaymath}
\gamma(A)=\left\{ \begin{array}{l}
0, \quad\textup{if}\,\,\, A=\emptyset\\
\infty, \quad \textup{if}\,\,\,\{m\in \N_+;\exists \,h\in C(A,\R^m\setminus \{0\}), h(-x)=-h(x)\}=\emptyset\\
\inf\{m\in \N_+;\exists \, h\in C(A,\R^m\setminus \{0\}), h(-x)=-h(x)\}
\end{array}\right.
\end{displaymath}
\end{Def}

\begin{Prop}\label{genusp} Let $A, B\in \Sigma(E)$, the genus $\gamma$ possesses the following properties.
\begin{itemize}
\item[(1)] If $\psi\in C(A,B)$ is odd, then $\gamma(A)\leq \gamma(B)$.
\item[(2)] If $\psi\in C(A,B)$ is an odd homeomorphism, then $\gamma(A)=\gamma(B)=\gamma(\psi(A))$.
\item[(3)] If $A \subset B$, then $\gamma(A)\leq \gamma(B)$.
\item[(4)] If $\gamma(B)< \infty$, $\gamma(\overline{A-B}) \geq \gamma(A)-\gamma(B)$.
\item[(5)] $\gamma(A\cup B)\leq \gamma(A)+\gamma(B)$.
\item[(6)] If $S^{n-1}$ is the sphere in $\R^n$, then $\gamma(S^{n-1})=n$.
\item[(7)] If $A$ is compact, then $\gamma(A)<\infty$.
\item[(8)] If $A$ is compact, there exists $\delta>0$ such that for $N_{\delta}(A)=\{x\in X: d(x,A)<\delta\}$ we have
$\gamma(A)=\gamma(N_{\delta}(A))$.
\end{itemize}
\end{Prop}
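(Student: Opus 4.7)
The eight items are the classical properties of the Krasnoselski genus, and the plan is to derive them essentially from the definition together with Tietze's extension theorem and the Borsuk--Ulam theorem. The key is to exploit the fact that genus is defined as an infimum over odd continuous maps into punctured Euclidean spaces, so everything reduces to producing or composing such maps.

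I would begin with (1)--(3), the easy functorial items. For (1), assume $\gamma(B)=m<\infty$ and pick an odd $h\in C(B,\R^m\setminus\{0\})$; then $h\circ\psi\in C(A,\R^m\setminus\{0\})$ is still odd, so $\gamma(A)\leq m$. (2) is immediate by applying (1) to $\psi$ and $\psi^{-1}$. (3) is the special case of (1) with $\psi$ the inclusion.

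For (5) I would use Tietze's theorem: given $h_A\colon A\to\R^m\setminus\{0\}$ and $h_B\colon B\to\R^n\setminus\{0\}$ realising $\gamma(A)$ and $\gamma(B)$, extend each coordinate of $h_A,h_B$ continuously to all of $E$ and then replace the extension $\tilde h$ by its odd part $\tfrac12(\tilde h(x)-\tilde h(-x))$ to restore the symmetry. The pair $(\tilde h_A,\tilde h_B)\colon A\cup B\to\R^{m+n}\setminus\{0\}$ is odd and nowhere zero on $A\cup B$, so $\gamma(A\cup B)\leq m+n$. Item (4) follows by applying (5) to $A\subset\overline{A-B}\cup B$ and rearranging, once one checks that $\overline{A-B}$ is closed and symmetric.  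For (7), cover the compact symmetric set $A$ by finitely many pairs $\{\pm U_i\}$ of small balls on which a fixed odd continuous map into some $\R^m\setminus\{0\}$ can be constructed by a partition-of-unity argument; finiteness of the cover gives $\gamma(A)<\infty$.  For (8), given an odd $h\colon A\to\R^m\setminus\{0\}$ realising $\gamma(A)$, use Tietze plus odd symmetrisation to extend $h$ to $E$; by continuity and compactness of $A$ the extension remains nonzero on some symmetric $\delta$-neighborhood, giving $\gamma(N_\delta(A))\leq\gamma(A)$, while the reverse inequality is (3).

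The genuinely non-trivial item is (6). The upper bound $\gamma(S^{n-1})\leq n$ is given by the inclusion $S^{n-1}\hookrightarrow\R^n\setminus\{0\}$, which is odd and continuous. The lower bound $\gamma(S^{n-1})\geq n$ is exactly the Borsuk--Ulam theorem: no odd continuous map $S^{n-1}\to\R^{m}\setminus\{0\}$ exists for $m<n$, since such a map would, after normalising, give an odd map $S^{n-1}\to S^{m-1}$, contradicting Borsuk--Ulam. This is the main obstacle in the sense that it is the one step that is not purely formal extension/composition; I would simply cite Borsuk--Ulam rather than reprove it. Taken together, (1)--(8) follow, and since the proofs are standard (see, e.g., Rabinowitz), one may either give the short arguments above or refer to the literature.
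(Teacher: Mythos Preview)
Your proposal is correct and in fact goes well beyond what the paper itself does: the paper's entire proof of this proposition is a one-line reference, ``The proof can be found in section 3 of \cite{Rab1}.'' Your sketches of (1)--(8) are the standard arguments and are accurate, including the identification of Borsuk--Ulam as the only non-formal ingredient, and you end by pointing to the same source (Rabinowitz). So there is no discrepancy in approach---you simply supply the details that the paper chose to outsource.
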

\begin{proof}
The proof can be found in section 3 of \cite{Rab1}.
\end{proof}
The abstract theory in \cite{Ambr1} will be employed to investigate the existence of solutions for Dirichlet problem \eqref{1-1}. We recall it in the
 following. Let $E$ be an infinite dimensional Banach space over $\R$. Let the functional $I\in C^1(E,\R)$ and $B_r=\{u\in E|\, \|u\|_E\leq r\}$. For convenience,
 set $B:=B_1$. Assume $I$ satisfies $I(0)=0$ and the following five properties,
\begin{itemize}
\item[\textup{($I_1$)}] the functional $I$ satisfies that $I(u)=I(-u)$ for all $u\in E$;
\item[\textup{($I_2$)}] the functional $I$ verifies the Palais-Smale condition;
\item[\textup{($I_3$)}] there exists a $\rho >0$ such that $I>0$ in $B_\rho\setminus \{0\}$ and $I\geq \alpha>0$ on $\partial B_\rho$;
\item[\textup{($I_4$)}] there exists $v\in E$ such that $\|v\|_E >\rho$ and $I(v)<\alpha$;
\item[\textup{($I_5$)}] for any finite dimensional subspaces $E_m \subset E$, it holds $E_m\cap A_0$ is bounded, where $A_0=\{u\in E|\, 0\leq I(u) <+\infty\}$.
\end{itemize}
Let
$\Gamma:=\{h\in C(E,E)|\,\, h(0)=0;\,\, h \,\,\textup{is odd homeomorphism};\,\,  h(B)\subset A_0\}$ and
$\Gamma_m=\{K\subset E| \,\,K \mbox{ compact};\,\, K=-K;\,\, \gamma(K\cap h(\partial B))\geq m,\,\, \forall h\in \Gamma\}$

\begin{Lem}\label{AR1}
Suppose $I$ satisfies $(I_1)$-$(I_5)$. For each $m\in N$, Let
\begin{equation}\label{eq:bm}
 b_m=\inf_{K\in \Gamma_m}\max_{u\in K} I(u).
\end{equation}
Then $0<\alpha\leq b_m\leq b_{m+1}$ and $b_m$ is a critical value of $I$. Moreover, if $b_{m+1}=\cdots=b_{m+r}=b$, then $\gamma(K_b)\geq r$,
where $K_b=\{u\in E| I^\prime (u)=0, I(u)=b\}$.
\end{Lem}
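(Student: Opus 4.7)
The plan is to follow the classical symmetric (Ljusternik--Schnirelman / Ambrosetti--Rabinowitz) minimax scheme, combining the genus properties of Proposition \ref{genusp} with an equivariant deformation lemma produced from $(I_1)$ and $(I_2)$.

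First I would verify that $\Gamma_m$ is non-empty, that $b_m\le b_{m+1}<\infty$, and that $b_m\ge\alpha$. Fixing an $m$-dimensional subspace $E_m\subset E$, $(I_5)$ provides $R>0$ with $E_m\cap A_0\subset B_R$, and I set $K:=\overline B_R\cap E_m$, which is compact and symmetric. For any $h\in\Gamma$ the function $u\mapsto\|h^{-1}(u)\|$ is continuous, even, vanishes at the origin, and is $\ge 1$ on $\partial B_R\cap E_m$ (there $I<0$, so $u\notin A_0\supset h(B)$). Thus $\{u\in K:\|h^{-1}(u)\|<1\}$ is an open, bounded, symmetric neighborhood of $0$ in $E_m\simeq\R^m$, and property (6) of Proposition \ref{genusp} applied to its boundary yields $\gamma(K\cap h(\partial B))\ge m$, so $K\in\Gamma_m$ and $\max_K I<\infty$. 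Monotonicity $b_m\le b_{m+1}$ is immediate from $\Gamma_{m+1}\subset\Gamma_m$. For the lower bound, the dilation $h_0(u):=\rho u$ is an odd homeomorphism with $h_0(B)=B_\rho\subset A_0$ by $(I_3)$, so $h_0\in\Gamma$; every $K\in\Gamma_m$ meets $\partial B_\rho=h_0(\partial B)$ in a set of genus $\ge m\ge 1$ which is therefore non-empty, and $I\ge\alpha$ on $\partial B_\rho$.

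Next I would show each $b_m$ is a critical value by contradiction. Supposing $K_{b_m}=\emptyset$, the equivariant deformation lemma --- which rests on $(I_1)$ and $(I_2)$ --- yields $\epsilon>0$ and an odd homeomorphism $\eta$ of $E$ satisfying $I\circ\eta\le I$ and $\eta(\{I\le b_m+\epsilon\})\subset\{I\le b_m-\epsilon\}$. I would pick $K\in\Gamma_m$ with $\max_K I<b_m+\epsilon$ and show $\eta(K)\in\Gamma_m$, which forces $b_m\le b_m-\epsilon$, absurd. The crucial point is that $\eta^{-1}\circ h\in\Gamma$ for every $h\in\Gamma$: it is an odd homeomorphism, and $I\circ\eta\le I$ forces $\eta^{-1}(A_0)\subset A_0$, so $(\eta^{-1}\circ h)(B)\subset\eta^{-1}(A_0)\subset A_0$. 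Applying the odd homeomorphism $\eta$ and property (2) of the genus then gives
\[\gamma(\eta(K)\cap h(\partial B))=\gamma(K\cap(\eta^{-1}\circ h)(\partial B))\ge m.\]

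For the multiplicity claim, suppose $b:=b_{m+1}=\cdots=b_{m+r}$ but $\gamma(K_b)\le r-1$. The Palais-Smale condition makes $K_b$ compact, and by property (8) there is a symmetric open neighborhood $N=N_\delta(K_b)$ with $\gamma(\overline N)\le r-1$. The cut-off deformation lemma then produces $\eta$ with $I\circ\eta\le I$ and $\eta(\{I\le b+\epsilon\}\setminus N)\subset\{I\le b-\epsilon\}$. Choosing $K\in\Gamma_{m+r}$ with $\max_K I<b+\epsilon$, property (4) of the genus yields $\gamma(\overline{(K\cap h(\partial B))\setminus N})\ge(m+r)-(r-1)=m+1$ for each $h\in\Gamma$, and since this closed set sits inside the closed set $\overline{K\setminus N}\cap h(\partial B)$, property (3) gives $\overline{K\setminus N}\in\Gamma_{m+1}$. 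Repeating the deformation step of the previous paragraph produces $\eta(\overline{K\setminus N})\in\Gamma_{m+1}$ with $\max I\le b-\epsilon<b_{m+1}$, the desired contradiction. The main obstacle in this outline will be establishing the equivariant deformation lemma with an odd $I$-decreasing pseudo-flow (and its cut-off version avoiding $N$); this is by now classical and follows from $(I_1)$ together with the local compactness of PS sequences supplied by $(I_2)$.
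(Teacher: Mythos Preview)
Your proposal is correct and is precisely the classical Ambrosetti--Rabinowitz argument; the paper itself does not give a proof but simply refers to Theorem 2.8 in \cite{Ambr1}, whose content your outline reproduces faithfully. The only minor imprecision is invoking property (6) of Proposition~\ref{genusp} for the boundary of a general bounded symmetric open neighborhood of $0$ in $E_m$ (rather than the round sphere), but this is the standard Borsuk--Ulam consequence and is exactly how the cited reference proceeds.
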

\begin{proof}
See Theorem 2.8 in \cite{Ambr1}.
\end{proof}
Let $\{E_m\}_{m\in N}$ be a sequence of subspaces of $E$, such that $\dim(E_m)=m$; $E_m \subset E_{m+1}$; $\mathcal{L}(\cup_{m\in \N} E_m)$ denotes the linear manifold generated by $\cup_{m\in \N} E_m$ which is dense in $E$. By $E_m^c$ we denote the algebraically and topologically complementary of $E_m$.
\begin{Lem}\label{AR2}
Let $I$ satisfies $(I_1)$-$(I_5)$. For each $m\in N$, let
\begin{equation}\label{eq:cm}
c_m=\sup_{h\in \Gamma}\inf_{u\in \partial B\cap E_{m-1}^c} I(h(u)).
\end{equation}
Then $0<\alpha\leq c_m\leq b_m\leq \infty$, $c_m\leq c_{m+1}$, and $c_m$ is a critical value of $I$.
\end{Lem}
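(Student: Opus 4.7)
The plan is to verify the four conclusions in turn: positivity $c_m\geq\alpha$, the sandwich $c_m\leq b_m$, monotonicity $c_m\leq c_{m+1}$, and the fact that $c_m$ is a critical value. The philosophy is that $b_m$ and $c_m$ are dual $\inf$-$\max$ and $\sup$-$\inf$ quantities linked by the same genus-based intersection mechanism, and the standard deformation machinery supplies the critical-value conclusion.

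To produce the lower bound I would simply test the definition against the scaling $h_{0}(u):=\rho u$. This is an odd homeomorphism of $E$ fixing $0$, and $(I_{3})$ makes $h_{0}(B)=B_{\rho}\subset A_{0}$ while simultaneously giving $I\circ h_{0}\geq\alpha$ on $\partial B$. Hence $h_{0}\in\Gamma$ and the $\inf$ in \eqref{eq:cm} against this particular $h_0$ is already $\geq\alpha$. Monotonicity is then immediate: $E_{m-1}\subset E_{m}$ forces $E_{m}^{c}\subset E_{m-1}^{c}$, so passing from $c_m$ to $c_{m+1}$ shrinks the set over which the $\inf$ is taken, which can only raise the $\inf$ and then the $\sup$.

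The central step is $c_{m}\leq b_{m}$. Fix $K\in\Gamma_{m}$ and $h\in\Gamma$, and set $A:=h^{-1}(K)\cap\partial B$. Since $h$ is an odd homeomorphism, it restricts to an odd homeomorphism $A\to K\cap h(\partial B)$, so property (2) of Proposition \ref{genusp} yields $\gamma(A)=\gamma(K\cap h(\partial B))\geq m$. Writing $E=E_{m-1}\oplus E_{m-1}^{c}$ with continuous linear projection $P\colon E\to E_{m-1}$: were $A\cap E_{m-1}^{c}$ empty, $P|_{A}$ would be a continuous odd map $A\to E_{m-1}\setminus\{0\}\cong\R^{m-1}\setminus\{0\}$, forcing $\gamma(A)\leq m-1$, a contradiction. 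Hence there exists $u_{0}\in\partial B\cap E_{m-1}^{c}$ with $h(u_{0})\in K$, so $\inf_{\partial B\cap E_{m-1}^{c}} I\circ h\leq\max_{K} I$; taking $\sup_{h}$ and $\inf_{K}$ gives $c_{m}\leq b_{m}$.

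For criticality I would argue by contradiction using an equivariant deformation lemma. If $c_{m}$ were regular, $(I_{1})$--$(I_{2})$ supply $\epsilon>0$ and an odd homeomorphism $\eta\colon E\to E$ with $\eta(0)=0$, $I\circ\eta\leq I$ everywhere, and $\eta(\{I\leq c_{m}+\epsilon\})\subset\{I\leq c_{m}-\epsilon\}$. Choose $h\in\Gamma$ with $\inf_{\partial B\cap E_{m-1}^{c}} I\circ h>c_{m}-\epsilon$ and set $\tilde h:=\eta^{-1}\circ h$. Since $I\circ\eta\leq I$ implies $\eta^{-1}(A_{0})\subset A_{0}$, we have $\tilde h\in\Gamma$. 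The contrapositive of the deformation property then forces $I(\tilde h(u))>c_{m}+\epsilon$ on $\partial B\cap E_{m-1}^{c}$, yielding $c_{m}\geq c_{m}+\epsilon$ --- absurd. The subtle point I expect to be the main obstacle is precisely this verification that $\tilde h\in\Gamma$: one must invoke a version of the deformation lemma strong enough to furnish an odd $\eta$ fixing the origin and globally non-increasing $I$, so that pulling back by $\eta^{-1}$ preserves the set $A_{0}$ and the admissible class $\Gamma$.
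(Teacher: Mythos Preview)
The paper does not actually prove this lemma: its entire proof reads ``See Theorem 2.13 in \cite{Ambr1}.'' Your proposal correctly supplies the standard Ambrosetti--Rabinowitz argument that the citation points to: the lower bound via the scaling map $h_0=\rho\cdot\mathrm{id}$, the genus--projection argument for $c_m\leq b_m$, monotonicity from $E_m^c\subset E_{m-1}^c$, and criticality via the equivariant deformation lemma. The one point you flag as delicate---that the deformation $\eta$ must be an odd homeomorphism fixing $0$ with $I\circ\eta\leq I$, so that $\eta^{-1}$ preserves $A_0$ and hence $\tilde h=\eta^{-1}\circ h\in\Gamma$---is exactly the technical content of Lemmas~1.3 and~2.4 in \cite{Ambr1}, where $\eta$ is built as the time-$1$ map of a truncated negative pseudo-gradient flow (identity outside $\{|I-c_m|\leq\bar\epsilon\}$, hence fixing $0$ since $c_m\geq\alpha>0$). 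So your outline is sound and matches the cited source; there is simply nothing in the present paper to compare it against beyond the reference itself.
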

\begin{proof}
See Theorem 2.13 in \cite{Ambr1}.
\end{proof}
\section{The case of $p<q<p^*$}
\subsection{The proof of Theorem \ref{T2}}

The idea of the proof here is to verify the condition $I_1$-$I_5$ in Lemma \ref{AR1} and Lemma \ref{AR2}. The following lemmas will be applied in the proof.

\begin{Lem}\label{compact}
For $1<p<N$ and $1\leq q <p^*=\frac{Np}{N-p}$ the embedding
\[
  \mathcal{H}_{p,0}^{1,(\gamma_1,\gamma_2)}(\mathbb{M})\hookrightarrow L_q^{\gamma_1^\prime, \gamma_2^\prime}(\M)
\]
if $\frac{N}{q}-\gamma_1^\prime>\frac{N}{p}-\gamma_1$ and $\frac{N}{q}-\gamma_2^\prime>\frac{N}{p}-\gamma_2$
\end{Lem}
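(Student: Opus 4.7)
My approach is to mimic the Rellich--Kondrachov argument for weighted Sobolev spaces on corner manifolds, splitting $\mathbb{M}$ into a part compactly contained in the interior (where the weights are harmless) and a thin boundary layer near the corner (where the strict weight gap buys compactness). By density of $C_0^\infty(\textup{int}\,\mathbb{M})$ it suffices to show that any bounded sequence $\{u_n\}\subset \mathcal{H}^{1,(\gamma_1,\gamma_2)}_{p,0}(\mathbb{M})$ admits a subsequence Cauchy in $L_q^{\gamma_1',\gamma_2'}(\mathbb{M})$. I also intend to use, as an auxiliary tool, the continuous ``isotropic'' Sobolev embedding
$\mathcal{H}^{1,(\gamma_1,\gamma_2)}_{p,0}(\mathbb{M})\hookrightarrow L_{p^\ast}^{\gamma_1+N/p^\ast-N/p,\,\gamma_2+N/p^\ast-N/p}(\mathbb{M})$, which is the non-compact Sobolev inequality with matching weight shift; this can be obtained by the change of variables $t=-\log x_1$, $s=-\log x_2$ that turns $x_1\partial_{x_1}$ and $x_1 x_2\partial_{x_2}$ (up to a harmless factor) into ordinary derivatives on the cylinder $(-\log\delta,\infty)^2\times X$, converts the algebraic weights into exponential ones, and reduces the inequality to the classical Sobolev embedding on the cylinder.

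For any fixed $\varepsilon\in(0,\delta)$ set $\mathbb{M}_\varepsilon:=[\varepsilon,\delta)\times[\varepsilon,\delta)\times X$. On $\mathbb{M}_\varepsilon$ the factors $x_1,x_2$ are bounded away from $0$, so the measure $d\sigma$ is comparable to Lebesgue measure on $\mathbb{M}_\varepsilon$, the algebraic weights $x_1^{N/p-\gamma_1}x_2^{N/p-\gamma_2}$ are comparable to $\varepsilon$-dependent constants, and $|\nabla_{\mathbb{M}} u|$ is comparable to $|\nabla u|$. Hence $\|u_n\|_{\mathcal{H}^{1,(\gamma_1,\gamma_2)}_p(\mathbb{M}_\varepsilon)}$ dominates $\|u_n\|_{W^{1,p}(\mathbb{M}_\varepsilon)}$ uniformly in $n$. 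The classical Rellich--Kondrachov theorem on the bounded smooth domain $\mathbb{M}_\varepsilon$ then produces, for each $\varepsilon$, a subsequence (not relabelled) Cauchy in $L^q(\mathbb{M}_\varepsilon)$, hence in $L_q^{\gamma_1',\gamma_2'}(\mathbb{M}_\varepsilon)$. A standard diagonal selection over $\varepsilon=1/k$ yields a single subsequence convergent in $L_q^{\gamma_1',\gamma_2'}$ on every $\mathbb{M}_\varepsilon$.

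The crux is controlling the boundary-layer contribution $\mathbb{M}\setminus\mathbb{M}_\varepsilon=\{x_1<\varepsilon\}\cup\{x_2<\varepsilon\}$ uniformly in $n$. Rewrite the integrand as
\[
 x_1^{N-q\gamma_1'-2}\,x_2^{N-q\gamma_2'-1}\,|u|^q
 = x_1^{q(\gamma_1-\gamma_1')}\,x_2^{q(\gamma_2-\gamma_2')}\cdot x_1^{N-q\gamma_1-2}\,x_2^{N-q\gamma_2-1}\,|u|^q,
\]
and apply H\"older's inequality, with a small exponent $r>1$ close to $1$, against the ``target'' space $L_{qr}^{\gamma_1+N(1/(qr)-1/q),\,\gamma_2+N(1/(qr)-1/q)}$ obtained from the continuous Sobolev embedding above (for $qr<p^\ast$, which is permitted since $q<p^\ast$). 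The H\"older conjugate factor gives an integral of $x_1^{a(\varepsilon)}x_2^{b(\varepsilon)}$ over $\{x_1<\varepsilon\}\cup\{x_2<\varepsilon\}$ against $d\sigma$, with exponents
\[
 a(\varepsilon)=q(\gamma_1-\gamma_1')\cdot\tfrac{r}{r-1}+\cdots,\qquad b(\varepsilon)=q(\gamma_2-\gamma_2')\cdot\tfrac{r}{r-1}+\cdots,
\]
and the strict inequalities $N/q-\gamma_i'>N/p-\gamma_i$ are exactly what is needed to keep the corresponding power-weighted volume integrals finite and $O(\varepsilon^{\kappa})$ for some $\kappa>0$ as $\varepsilon\to 0$, uniformly in $n$ (since $\{u_n\}$ is bounded in the target $L_{qr}^{\cdot,\cdot}$ space by the Sobolev embedding).

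The main obstacle I expect is this last tail estimate: one must choose $r>1$ and the H\"older split carefully so that the small-power ``remainder'' used against H\"older is still integrable near each face $\{x_1=0\}$ and $\{x_2=0\}$, which relies precisely on both strict inequalities in the hypothesis (not just one). Once the tail bound $O(\varepsilon^\kappa)$ is in hand, combining it with the interior Cauchy property on $\mathbb{M}_\varepsilon$ and letting $\varepsilon\to 0$ shows that the diagonal subsequence is Cauchy in $L_q^{\gamma_1',\gamma_2'}(\mathbb{M})$, completing the proof.
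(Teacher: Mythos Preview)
Your overall interior-plus-tail strategy is reasonable, but there is a genuine gap in the step you flag as ``auxiliary'': the continuous Sobolev embedding $\mathcal{H}^{1,(\gamma_1,\gamma_2)}_{p,0}(\mathbb{M})\hookrightarrow L_{qr}^{\tilde\gamma_1,\tilde\gamma_2}(\mathbb{M})$. Your proposed justification via $t=-\log x_1$, $s=-\log x_2$ does \emph{not} straighten the corner operator. Under that substitution $x_1\partial_{x_1}\mapsto -\partial_t$ is fine, but $x_1x_2\partial_{x_2}\mapsto -e^{-t}\partial_s$, and the factor $e^{-t}$ (which vanishes as $t\to\infty$, i.e.\ as $x_1\to 0$) is not harmless: you end up with an anisotropic degenerate operator on the cylinder. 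Simultaneously $d\sigma=\tfrac{dx_1}{x_1}\tfrac{dx_2}{x_1x_2}dx'$ becomes $e^{t}\,dt\,ds\,dx'$, not Lebesgue measure. So the transformed space is \emph{not} a standard $W^{1,p}$ on a cylinder, and the classical Sobolev inequality does not apply. (There is also a slip in your target weights: the matched shift for $L_{qr}$ should be $\gamma_i+N(1/(qr)-1/p)$, not $\gamma_i+N(1/(qr)-1/q)$.)

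The paper's proof confronts exactly this difficulty. It uses a \emph{two-stage} change of variables: first $x_2=e^{-y}$, which turns $x_1x_2\partial_{x_2}$ into $-x_1\partial_y$ and lands in an edge-type space $\mathcal{H}^{m,\gamma_1}_{l,0}(\mathbb{R}_+\times\mathbb{R}\times X)$; then the crucial second step $x_1=e^{-\rho}$ together with the \emph{oblique} substitution $y=e^{-\rho}\xi$ (equivalently $\xi=-(\log x_2)/x_1$), which finally turns $x_1\partial_y$ into $\partial_\xi$ and the measure into Lebesgue. Only after this nested substitution does one reach a genuine $W^{m,l}_0(\mathbb{R}^2\times X)$, and then the strict weight gap $\delta_i=(N/q-\gamma_i')-(N/p-\gamma_i)>0$ appears as rapidly decaying factors $e^{-\rho\delta_1}e^{-\xi e^{-\rho}\delta_2}$ that force compactness of $W^{1,p}\hookrightarrow L^q$ on the (noncompact) image. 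In other words, the coupling of the two singular directions through the extra $x_1$ in $x_1x_2\partial_{x_2}$ and in $d\sigma$ is the whole point, and it cannot be removed by a product substitution. If you grant yourself the continuous embedding by the paper's isomorphism $S_{l,(\gamma_1,\gamma_2)}$, then your H\"older tail estimate does go through (the remainder weight on the strip has exponents $q\delta_i r'$ which tend to $+\infty$ as $r\to 1^+$), and the interior Rellich plus tail argument gives a valid alternative route; but as written, the hard analytic step has been assumed rather than proved.
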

\begin{proof}

Since the embedding $\mathcal{H}_{p,0}^{0,(\gamma^\prime_1,\gamma^\prime_2)}(\mathbb{M})\hookrightarrow L_q^{\gamma_1^\prime, \gamma_2^\prime}(\M)$ is continuous, it is sufficient to prove
\[
  [\omega][\sigma]\mathcal{H}_{p,0}^{1,(\gamma_1,\gamma_2)}(\R_+\times\R_+\times X)\hookrightarrow  [\omega][\sigma]\mathcal{H}_{q,0}^{0,(\gamma^\prime_1,\gamma^\prime_2)}(\R_+\times\R_+\times X)
\]
is compact.
Set $1\leq l <\infty$, for any $v(x)\in \mathcal{H}_{l,0}^{m,(\gamma_1,\gamma_2)}(\R_+\times\R_+\times X)$, define
\begin{equation}\label{eq:Slr2}
(\hat{S}_{l\gamma_2}v)(x_1,y,x^\prime) = e^{-y(\frac{N}{l}-\gamma_2)}v(x_1,e^{-y},x^\prime):=w(x_1,y,x^\prime).
\end{equation}
Then $\hat{S}_{l\gamma_2}$ induces an isomorphism
\begin{equation}\label{S1}
\hat{S}_{l\gamma_2}:[\omega][\sigma]\mathcal{H}_{l,0}^{m,(\gamma_1,\gamma_2)}(\R_+\times\R_+\times X)\to [\omega][\tilde{\sigma}]\mathcal{H}_{l,0}^{m,\gamma_1}(\R_+\times\R \times X)
\end{equation}
where $\tilde{\sigma}(y)=\sigma(e^{-y})$ and the $\mathcal{H}^{m,\gamma}_{l,0}(\R_+\times\R\times X)$ (see more in \cite{Schu20} and \cite{SchuWei}) denotes the space of all
 $w(x_1,y,x^\prime)\in \mathcal{D}^\prime(\R_+\times\R\times X)$ such that, for $k,j\in \N$
and $\alpha\in \N^{N-2}$
\begin{equation}\label{wedgesobo}
\|w\|^l_{\mathcal{H}^{m,\gamma}_{l,0}(\R_+\times\R\times X)}=\sum_{k+j+|\alpha|\leq m}\int_{\R_+\times \R\times X}|x_1^{\frac{N}{l}-\gamma_1}(x_1\partial_{x_1})^k(x_1\partial_y)^j\partial_{x^\prime}^\alpha w|^l \frac{dx_1}{x_1}\frac{dy}{x_1}dx^\prime <\infty.
\end{equation}
In fact, we have
\begin{align*}
&\|(\hat{S}_{l\gamma_2}v)(x_1,y,x^\prime)\|^l_{\mathcal{H}_{l,0}^{m,\gamma_1}(\R_+\times\R \times X)}= \|w\|^l_{\mathcal{H}^{m,\gamma}_{l,0}(\R_+\times\R\times X)}\\
=&\sum_{k+j+|\alpha|\leq m}\int_{\R_+\times \R\times X}|x_1^{\frac{N}{l}-\gamma_1}(x_1\partial_{x_1})^k(x_1\partial_y)^j\partial_{x^\prime}^\alpha e^{-y(\frac{N}{p}-\gamma_2)}v(x_1,e^{-y},x^\prime)|^l \frac{dx_1}{x_1}\frac{dy}{x_1}dx^\prime\\
=&c_{(N,p,\gamma_2)}\sum_{k+j+|\alpha|\leq m}\int_{\R_+\times \R\times X}|x_1^{\frac{N}{l}-\gamma_1} x_2^{\frac{N}{p}-\gamma_2} (x_1\partial_{x_1})^k (x_1x_2\partial_{x_2})^j \partial_{x^\prime}^\alpha v(x_1,x_2,x^\prime)|^l \frac{dx_1}{x_1}\frac{dx_2}{x_1x_2}dx^\prime\\
=&\|v(x)\|^l_{\mathcal{H}_{l,0}^{m,(\gamma_1,\gamma_2)}(\R_+\times\R_+\times X)}<\infty
\end{align*}
It proves the isomorphism of $\hat{S}_{l\gamma_2}$ in \eqref{S1}.
Moreover, we need the following map to deal with the degeneracy caused by $x_1$,
\begin{equation}\label{TSr1}
(\tilde{S}_{l\gamma_1}w)(\rho,\xi,x^\prime)=e^{-\rho(\frac{N}{l}-\gamma_1)}w(e^{-\rho},e^{-\rho}\xi,x^\prime):=a(\rho,\xi,x^\prime)
\end{equation}
which induces an isomorphism
\begin{equation}\label{S2}
\tilde{S}_{l\gamma_1}: [\omega][\tilde{\sigma}]\mathcal{H}^{m,\gamma_1}_{l,0}(\R_+\times \R \times X)\to [\tilde{\omega}][\hat{\sigma}]W_0^{m,l}(\R \times\R \times X)
\end{equation}
where $\tilde{\omega}(\rho)=\omega(e^{-\rho})$, and $\hat{\sigma}(\xi)$ is a cut-off function in $\xi$ for $\xi=\frac{y}{x_1}$ with $y\in \textup{supp}\tilde{\sigma}(y)$ and $x_1\in \textup{supp}\omega(x_1)$, and $W^{m,p}(\cdot)$ denotes the classical Sobolev spaces.
In fact, the rule of changing variables implies that
\begin{align*}
&\|\tilde{S}_{l\gamma_1}w\|^l_{W_0^{m,l}(\R \times\R \times X)}
=\sum_{k+j+|\alpha|\leq m}\int_{\R\times \R\times X}|\partial_\rho^k \partial_\xi^j \partial_{x^\prime}^\alpha a(\rho,\xi,x^\prime)|^l d\rho d\xi dx^\prime\\
=&\sum_{k+j+|\alpha|\leq m}\int_{\R\times \R\times X}|\partial_\rho^k \partial_\xi^j \partial_{x^\prime}^\alpha e^{-\rho(\frac{N}{l}-\gamma_1)}w(e^{-\rho},e^{-\rho}\xi,x^\prime)|^l d\rho d\xi dx^\prime\\
=&c_{(N,l,\gamma_1)}\sum_{k+j+|\alpha|\leq m}\int_{\R_+\times \R\times X}|x_1^{\frac{N}{l}-\gamma_1}(x_1\partial_{x_1})^k (x_1\partial_y)^j \partial_{x^\prime}^\alpha w(x_1,y,x^\prime)|^l \frac{dx_1}{x_1}\frac{dy}{x_1} dx^\prime\\
=&c_{(N,l,\gamma_1)}\|w(x_1,y,x^\prime)\|^l_{\mathcal{H}^{m,\gamma_1}_{l,0}(\R_+\times \R \times X)}<\infty
\end{align*}
This induces the isomorphism of $\tilde{S}_{l\gamma_1}$ in \eqref{S2}.
Then set $S_{l(\gamma_1,\gamma_2)}=\tilde{S}_{l,\gamma_1}\circ \hat{S}_{l,\gamma_2}$, for $v(x)\in \mathcal{H}^{m,(\gamma_1,\gamma_2)}_{l,0}(\R_+\times \R_+ \times X)$, we have
\[
  (S_{l,(\gamma_1,\gamma_2)}v)(\rho,\xi,x^\prime)=e^{-\rho(\frac{N}{l}-\gamma_1)}
  e^{-\xi e^{-\rho}(\frac{N}{l}-\gamma_2)}v(e^{-\rho},e^{-\xi e^{-\rho}},x^\prime)
\] which induces the following isomorphism,
\begin{equation}\label{mapS}
  S_{l(\gamma_1,\gamma_2)}=\tilde{S}_{l,\gamma_1}\circ \hat{S}_{l,\gamma_2}:[\omega][\sigma]\mathcal{H}^{m,(\gamma_1,\gamma_2)}_{l,0}(\R_+\times \R_+ \times X)
  \to [\tilde{\omega}][\hat{\sigma}]W_0^{m,l}(\R\times \R \times X)
\end{equation}
Now for $u_q\in\mathcal{H}^{0,(\gamma^\prime_1,\gamma^\prime_2)}_{q,0}(\R_+\times \R_+ \times X)$, we have
\[
  (S_{q,(\gamma_1^\prime,\gamma_2^\prime)}[\omega][\sigma]u_q)(\rho,\xi,x^\prime)=[\tilde{\omega}][\hat{\sigma}]
  e^{-\rho(\frac{N}{q}-\gamma^\prime_1)}e^{-\xi e^{-\rho}(\frac{N}{q}-\gamma^\prime_2)}u_q(e^{-\rho},e^{-\xi e^{-\rho}},x^\prime)
\]
which gives the following isomorphism
\[
  S_{q,(\gamma_1^\prime,\gamma_2^\prime)}:[\omega][\sigma]\mathcal{H}^{0,(\gamma^\prime_1,\gamma^\prime_2)}_{q,0}(\R_+\times \R_+ \times X)\to [\tilde{\omega}][\hat{\sigma}]W^{0,q}_0(\R\times\R \times X).
\]
In the other hand, the map $S_{q,(\gamma_1^\prime,\gamma_2^\prime)}$ induces another isomorphism, for $u_p\in \mathcal{H}^{1,(\gamma_1,\gamma_2)}_{p,0}(\R_+\times \R_+ \times X)$, as follows. Set $\delta_1:=(\frac{N}{q}-\gamma^\prime_1)-(\frac{N}{p}-\gamma_1)$, $\delta_2:=(\frac{N}{q}-\gamma^\prime_2)-(\frac{N}{p}-\gamma_2)$, then we have
\begin{align*}
&(S_{q,(\gamma_1^\prime,\gamma_2^\prime)}[\omega][\sigma]u_p)(\rho,\xi,x^\prime)
=[\tilde{\omega}][\hat{\sigma}]e^{-\rho(\frac{N}{q}-\gamma^\prime_1)}e^{-\xi e^{-\rho}(\frac{N}{q}-\gamma^\prime_2)}
u_p(e^{-\rho},e^{-\xi e^{-\rho}},x^\prime)\\
=&[\tilde{\omega}][\hat{\sigma}]e^{-\rho\delta_1}e^{-\xi e^{-\rho}\delta_2}
e^{-\rho(\frac{N}{p}-\gamma_1)}e^{-\xi e^{-\rho}(\frac{N}{p}-\gamma_2)}u_p(e^{-\rho},e^{-\xi e^{-\rho}},x^\prime)\\
\end{align*}
which gives the isomorphism
\[
  S_{q,(\gamma_1^\prime,\gamma_2^\prime)}:[\omega][\sigma]\mathcal{H}^{1,(\gamma_1,\gamma_2)}_{p,0}(\R_+\times \R_+ \times X)\to
   [\tilde{\omega}][\hat{\sigma}]e^{-\rho\delta_1}e^{-\xi e^{-\rho}\delta_2}W^{1,p}_0(\R\times \R\times X).
\]
For $1< q< p^*$ and $\delta_1>0$, $\delta_2>0$, the following embedding is compact
\[
  [\tilde{\omega}][\hat{\sigma}]e^{-\rho\delta_1}e^{-\xi e^{-\rho}\delta_2}W^{1,p}_0(\R\times \R\times X)\hookrightarrow   [\tilde{\omega}][\hat{\sigma}]W^{0,q}_0(\R\times\R \times X)
\]
since the functions $e^{-\rho\delta_1}$ and $e^{-\xi e^{-\rho}\delta_2}$ vanish rapidly as $\rho\to \infty$ and $\xi\to \infty$, then
the function $\varphi(\rho\xi)=e^{-\rho\delta_1}\rho^{s_1}e^{-\xi e^{-\rho}\delta_2}\xi^{s_2}$ and all the derivatives in $\rho$ and $\xi$
are uniformly bounded on $\textup{supp}\tilde{\omega}$ and $\textup{supp}\hat{\sigma}$ for every $s_1, s_2\in \R$.
\end{proof}
\begin{Rem}\label{embconti}
Apply the same idea in Lemma \ref{compact}, for $1<p<N$ and $1\leq q< p^*$ the embedding
\[
  \mathcal{H}_{p,0}^{1,(\gamma_1,\gamma_2)}(\mathbb{M})\hookrightarrow L_q^{\gamma_1^\prime, \gamma_2^\prime}(\M)
\]
is continuous, if $\frac{N}{q}-\gamma_1^\prime \geq \frac{N}{p}-\gamma_1$ and $\frac{N}{q}-\gamma_2^\prime \geq \frac{N}{p}-\gamma_2$.
The embedding
\[
  \mathcal{H}_{p,0}^{m^\prime,(\gamma^\prime_1,\gamma^\prime_2)}(\mathbb{M})\hookrightarrow  \mathcal{H}_{p,0}^{m,(\gamma_1,\gamma_2)}(\mathbb{M})
\]
is continuous if $m^\prime\geq m$, $\gamma_1^\prime\geq \gamma_1$, and $\gamma_2^\prime\geq \gamma_2$.
\end{Rem}

\begin{Lem}[Breizis-Lieb type result]\label{BLR}
Let $1\leq p <\infty$ and $\{u_k\}\subset L_p^{\gamma_1,\gamma_2}(\M)$. If the following
conditions are satisfied
\begin{itemize}
\item[(i)] $\{u_k\}$ is bounded in $L_p^{\gamma_1,\gamma_2}(\M)$,
\item[(ii)] $u_k \to u$ a.e in $\textup{int}\M$, as $k\to \infty$,
\end{itemize} then
\begin{equation}\label{eq:BLR}
\lim_{k\to \infty}(\|u_k\|^p_{L_p^{\gamma_1,\gamma_2}(\M)}-\|u_k-u\|^p_{L_p^{\gamma_1,\gamma_2}(\M)})=\|u\|^p_{L_p^{\gamma_1,\gamma_2}(\M)}
\end{equation}
\end{Lem}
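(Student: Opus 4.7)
The plan is to carry the classical Brezis--Lieb argument over to the weighted setting. Observe that
\[
\|u\|^p_{L_p^{\gamma_1,\gamma_2}(\M)} = \int_\M |u(x)|^p\, d\mu(x), \qquad d\mu(x) := x_1^{N-p\gamma_1}x_2^{N-p\gamma_2}\, d\sigma,
\]
so $d\mu$ is a fixed positive Borel measure on $\textup{int}\M$ and the statement reduces to the Brezis--Lieb lemma for $L^p(\M,d\mu)$. The weights carry no $k$-dependence, so the only thing to verify is that the classical measure-theoretic proof still applies.

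First I would record two preliminaries. By Fatou's lemma applied to $|u_k|^p\,d\mu$, combined with the a.e.\ convergence $u_k\to u$ and the uniform bound $\sup_k\|u_k\|_{L_p^{\gamma_1,\gamma_2}(\M)}<\infty$, one gets $u\in L_p^{\gamma_1,\gamma_2}(\M)$. Setting $v_k:=u_k-u$, the triangle inequality gives $\sup_k\|v_k\|_{L_p^{\gamma_1,\gamma_2}(\M)}\leq M<\infty$, and $v_k\to 0$ a.e.\ in $\textup{int}\M$.

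The pointwise tool is the elementary inequality: for every $\epsilon>0$ there is $C_\epsilon>0$ such that
\[
\bigl||a+b|^p-|a|^p-|b|^p\bigr|\leq \epsilon|a|^p+C_\epsilon|b|^p \qquad (a,b\in\R).
\]
Applying it with $a=v_k$ and $b=u$, the function
\[
W_{k,\epsilon}(x) := \Bigl(\bigl||u_k(x)|^p-|v_k(x)|^p-|u(x)|^p\bigr|-\epsilon|v_k(x)|^p\Bigr)_{+}
\]
satisfies $0\leq W_{k,\epsilon}\leq C_\epsilon|u|^p$. Since $v_k\to 0$ a.e.\ implies $|u_k|^p-|v_k|^p\to|u|^p$ a.e., we have $W_{k,\epsilon}\to 0$ a.e. Because $|u|^p\in L^1(\M,d\mu)$ by the first preliminary, Lebesgue dominated convergence with respect to $d\mu$ gives $\int_\M W_{k,\epsilon}\,d\mu\to 0$ as $k\to\infty$.

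From the definition of $W_{k,\epsilon}$,
\[
\int_\M\bigl||u_k|^p-|v_k|^p-|u|^p\bigr|\,d\mu \leq \int_\M W_{k,\epsilon}\,d\mu + \epsilon\int_\M|v_k|^p\,d\mu \leq \int_\M W_{k,\epsilon}\,d\mu + \epsilon M,
\]
so $\limsup_{k\to\infty}\int_\M||u_k|^p-|v_k|^p-|u|^p|\,d\mu\leq \epsilon M$, and letting $\epsilon\to 0^{+}$ produces \eqref{eq:BLR}. The only step that warrants any attention is the legitimacy of dominated convergence against the weight $x_1^{N-p\gamma_1}x_2^{N-p\gamma_2}d\sigma$; however, this weight is independent of $k$ and $|u|^p$ is $d\mu$-integrable by the Fatou step, so no genuine obstacle appears beyond the classical argument, and the corner-singularity structure of $\M$ plays no role here.
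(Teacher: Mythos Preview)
Your proof is correct and follows essentially the same route as the paper: Fatou's lemma to ensure $u\in L_p^{\gamma_1,\gamma_2}(\M)$, the elementary inequality $\bigl||a+b|^p-|a|^p-|b|^p\bigr|\le \epsilon|a|^p+C_\epsilon|b|^p$, the truncated function $W_{k,\epsilon}$ (the paper's $f_k^\varepsilon$), and dominated convergence. Your presentation is in fact slightly cleaner, since you make explicit that the weighted norm is an ordinary $L^p$-norm with respect to the fixed measure $d\mu=x_1^{N-p\gamma_1}x_2^{N-p\gamma_2}\,d\sigma$, which is exactly why the classical Brezis--Lieb argument transfers verbatim; one cosmetic slip is that your bound ``$\epsilon M$'' should be ``$\epsilon M^p$'' given how you defined $M$.
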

\begin{proof}
Due to Fatou Lemma, it yields
\begin{align*}
 \|u\|^p_{L_p^{\gamma_1,\gamma_2}}&=\int_\M |x_1^{\frac{N}{p}-\gamma_1}x_2^{\frac{N}{p}-\gamma_2} u|^p d\sigma\\
 &\leq \liminf_{k\to \infty} \int_\M |x_1^{\frac{N}{p}-\gamma_1}x_2^{\frac{N}{p}-\gamma_2} u_k|^p d\sigma
 =\liminf_{k\to \infty} \|u_k\|^p_{L_p^{\gamma_1,\gamma_2}}<\infty
\end{align*}
For simplicity, we set here $\tilde{u}_k=x_1^{\frac{N}{p}-\gamma_1}x_2^{\frac{N}{p}-\gamma_2} u_k$ and
$\tilde{u}=x_1^{\frac{N}{p}-\gamma_1}x_2^{\frac{N}{p}-\gamma_2} u$.
Since $p>1$, then $j(t)=t^p$ is convex. For any fixed $\varepsilon >0$, there exists a constant $c_{\varepsilon}$,
such that
\[
  \big||\tilde{u}_k-\tilde{u}+\tilde{u}|^p+|\tilde{u_k}-\tilde{u}|^p\big|\leq \varepsilon |\tilde{u}_k-\tilde{u}|^p+c_\varepsilon |\tilde{u}|^p,
\] and then
\[
   \big||\tilde{u}_k-\tilde{u}+\tilde{u}|^p-|\tilde{u_k}-\tilde{u}|^p-|\tilde{u}|^p \big|\leq \varepsilon |\tilde{u}_k-\tilde{u}|^p+(1+c_\varepsilon) |\tilde{u}|^p.
\]
Therefore, we obtain that \[
  f_k^\varepsilon:=( \big||\tilde{u}_k|^p-|\tilde{u_k}-\tilde{u}|^p-|\tilde{u}|^p \big|-\varepsilon |\tilde{u}_k-\tilde{u}|^p)^+\leq (1+c_\varepsilon) |\tilde{u}|^p
\]
Then Lebesgue dominate theorem induces
\[
  \lim_{k\to \infty} \int_\M f_k^\varepsilon (x)d\sigma =\int_\M \lim_{k\to \infty} f_k^\varepsilon (x)d\sigma=0.
\]
Since
\begin{align*}
&\big||x_1^{\frac{N}{p}-\gamma_1}x_2^{\frac{N}{p}-\gamma_2} u_k|^p-|x_1^{\frac{N}{p}-\gamma_1}x_2^{\frac{N}{p}-\gamma_2} u_k-x_1^{\frac{N}{p}-\gamma_1}x_2^{\frac{N}{p}-\gamma_2} u|^p-|x_1^{\frac{N}{p}-\gamma_1}x_2^{\frac{N}{p}-\gamma_2} u|^p\big|\\
  \leq & f_k^\varepsilon+ \varepsilon |x_1^{\frac{N}{p}-\gamma_1}x_2^{\frac{N}{p}-\gamma_2} u_k-x_1^{\frac{N}{p}-\gamma_1}x_2^{\frac{N}{p}-\gamma_2} u|^p,
\end{align*}
then for any arbitrary small $\varepsilon$, it follows that
\[
  \limsup_{k\to \infty} \int_\M \big||x_1^{\frac{N}{p}-\gamma_1}x_2^{\frac{N}{p}-\gamma_2} u_k|^p-|x_1^{\frac{N}{p}-\gamma_1}x_2^{\frac{N}{p}-\gamma_2} (u_k- u)|^p-|x_1^{\frac{N}{p}-\gamma_1}x_2^{\frac{N}{p}-\gamma_2} u|^p\big|d\sigma \leq c\cdot \varepsilon
\]
 where $c:=\sup \int_\M  |x_1^{\frac{N}{p}-\gamma_1}x_2^{\frac{N}{p}-\gamma_2}( u_k- u)|^p d\sigma.$ It verifies the result.
\end{proof}
By a direct calculation, one can derive that the energy functional
 \[
  J(u)=\frac{1}{p}\int_\M x_1|\nabla_\M u|^p d\sigma -\frac{\lambda}{q}\int_\M x_1(x_1x_2)^p |u|^q d\sigma \in C^1(\mathcal{H}^{1,(\frac{N-1}{p},\frac{N}{p})}_{p,0}(\M),\R)
\] satisfies $J(0)=0$ and $J(u)=J(-u)$ for any $u\in \mathcal{H}^{1,(\frac{N-1}{p},\frac{N}{p})}_{p,0}(\M)$.
\begin{Lem}\label{PScond}
Let $p<q<p^*$, then the functional
\[
   J(u)=\frac{1}{p}\int_\M x_1|\nabla_\M u|^p d\sigma -\frac{\lambda}{q}\int_\M x_1(x_1x_2)^p |u|^q d\sigma
\]
verifies the $($PS$)$ condition.
\end{Lem}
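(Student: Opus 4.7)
The plan is the standard Palais--Smale argument for subcritical quasi-linear functionals, transposed to the weighted corner setting. First, I would take $\{u_k\}\subset\mathcal{H}:=\mathcal{H}^{1,(\frac{N-1}{p},\frac{N}{p})}_{p,0}(\M)$ with $J(u_k)\to c$ and $\|J'(u_k)\|_{\ast}\to 0$. The identity
\begin{equation*}
qJ(u_k)-\langle J'(u_k),u_k\rangle=\Big(\frac{q}{p}-1\Big)\int_\M x_1|\nabla_\M u_k|^p\,d\sigma,
\end{equation*}
in which the nonlinear parts cancel exactly, combined with Proposition \ref{p-ineq}, would give $\|u_k\|_{\mathcal H}^{p}\le C(1+\|u_k\|_{\mathcal H})$. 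Because $p>2>1$, this forces boundedness of $\{u_k\}$ in $\mathcal H$.

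Next, using reflexivity of $\mathcal H$ -- which follows from the isomorphism $S_{p,(\gamma_1,\gamma_2)}$ of \eqref{mapS} onto a closed subspace of a standard $W^{1,p}$ space -- I would extract a subsequence with $u_k\rightharpoonup u$ in $\mathcal H$ and $u_k\to u$ a.e.\ on $\mathrm{int}\,\M$. Choosing
\begin{equation*}
\gamma_1'=(N-1-p)/q,\qquad \gamma_2'=(N-p)/q,
\end{equation*}
makes $\int_\M x_1(x_1x_2)^p|v|^q\,d\sigma$ coincide with $\|v\|_{L_q^{\gamma_1',\gamma_2'}(\M)}^{q}$. An arithmetic check would then show that the strict inequalities $N/q-\gamma_i'>N/p-\gamma_i$ required by Lemma \ref{compact} hold throughout $p<q<p^*$, so the embedding $\mathcal H\hookrightarrow L_q^{\gamma_1',\gamma_2'}(\M)$ is compact and $u_k\to u$ strongly there.

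To upgrade to strong convergence in $\mathcal H$, I would invoke the Simon-type inequality
\begin{equation*}
\bigl\langle|a|^{p-2}a-|b|^{p-2}b,\,a-b\bigr\rangle\ge c_p|a-b|^p,\qquad p\ge 2,
\end{equation*}
applied pointwise with $a=\nabla_\M u_k$, $b=\nabla_\M u$ and integrated against $x_1\,d\sigma$. The task then reduces to showing that
\begin{equation*}
\int_\M x_1\bigl(|\nabla_\M u_k|^{p-2}\nabla_\M u_k-|\nabla_\M u|^{p-2}\nabla_\M u\bigr)\cdot\nabla_\M(u_k-u)\,d\sigma
\end{equation*}
tends to $0$. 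The $u_k$ contribution equals $\langle J'(u_k),u_k-u\rangle+\lambda\int_\M x_1(x_1x_2)^p|u_k|^{q-2}u_k(u_k-u)\,d\sigma$: the first term is $o(1)$ because $\|J'(u_k)\|_\ast\to 0$ and $\{u_k-u\}$ is bounded, while the second is $o(1)$ by H\"older in $L_q^{\gamma_1',\gamma_2'}$ paired with the compact embedding. The $u$ contribution vanishes by the weak convergence $u_k\rightharpoonup u$ in $\mathcal H$, since $v\mapsto\int_\M x_1|\nabla_\M u|^{p-2}\nabla_\M u\cdot\nabla_\M v\,d\sigma$ is a bounded linear functional on $\mathcal H$. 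The main obstacle is really the bookkeeping of the four weighted spaces: one must check that the strict condition of Lemma \ref{compact} persists across the whole subcritical range, which amounts to $p(N-p-1)\ge 0$ and is fine in the regime $2<p<N$ of the paper.
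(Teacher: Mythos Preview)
Your argument is correct and follows a genuinely shorter path than the paper's. The two proofs coincide through the first half: boundedness from $qJ(u_k)-\langle J'(u_k),u_k\rangle=(q/p-1)\|u_k\|_{\mathcal H}^p$, weak convergence by reflexivity, strong $L_q^{\gamma_1',\gamma_2'}$ convergence via Lemma~\ref{compact}, and the conclusion that
\[
I_1:=\int_\M x_1\bigl(|\nabla_\M u_k|^{p-2}\nabla_\M u_k-|\nabla_\M u|^{p-2}\nabla_\M u\bigr)\cdot\nabla_\M(u_k-u)\,d\sigma\longrightarrow 0.
\]
From here the paper takes a long detour: it argues by contradiction that $\nabla_\M u_k\to\nabla_\M u$ a.e., invokes the Brezis--Lieb lemma (Lemma~\ref{BLR}) to relate the norms, and then combines Egorov's theorem with an equi-integrability estimate to obtain $\|\nabla_\M u_k\|_{L_p}\to\|\nabla_\M u\|_{L_p}$. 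You bypass all of this with the Simon-type inequality $\langle|a|^{p-2}a-|b|^{p-2}b,a-b\rangle\ge c_p|a-b|^p$ for $p\ge 2$, which gives $\|\nabla_\M(u_k-u)\|_{L_p}^p\le c_p^{-1}I_1\to 0$ in one stroke; this is the standard modern device for $p$-Laplacian PS arguments and makes Lemma~\ref{BLR} unnecessary here. One small correction: your final arithmetic check reduces to $p^*\le p(p+1)$, i.e.\ $N-p-1\ge 0$, so strictly speaking it needs $p\le N-1$ rather than merely $p<N$; but the paper imposes exactly the same restriction (it writes ``$q<p^*<p(p+1)$'' in Proposition~\ref{Pbound}'s predecessor), so this is not a defect of your approach relative to the original.
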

\begin{proof}
Let $\{u_k(x)\}\in \mathcal{H}^{1,\frac{N-1}{p}}_{p,0}(\M)$ be a \textup(PS) sequence. Then
 \[
   J(u_k)-\frac{1}{q}<J^\prime(u_k),u_k>
=(\frac{1}{p}-\frac{1}{q})\int_{\M} x_1|\nabla_{\M} u|^p d\sigma < \infty
 \]
 which implies that
$
 \{\|u_k\|_{\mathcal{H}^{1,(\frac{N-1}{p},\frac{N}{p})}_{p,0}(\M)}\}
$ is bounded. Hence
\[
  u_k \rightharpoonup u  \,\,\, \textup{in}\,\,\, \mathcal{H}^{1,(\frac{N-1}{p},\frac{N}{p})}_{p,0}(\M),\mbox{ as  } k\to \infty,
\] and together with Lemma \ref{compact}, it follows
\[
  u_k  \to u \,\,\, \textup{in}\,\,\, L_q^{\gamma_1,\gamma_2}(\M),\mbox{ as  } k\to \infty,
\] for $1<q<p^*$ and $\frac{1}{p}<\frac{N}{q}-\gamma_1<p+1$, $0<\frac{N}{q}-\gamma_2<p$.
Let us calculate that
\begin{align*}
o(1)=&<J^\prime(u_k)-J^\prime(u),u_k-u>\\
=&\int_{\M}(|\nabla_{\M} x_1 u_k|^{p-2}\nabla_{\M}u_k-|\nabla_{\M}u|^{p-2}\nabla_{\M}u)(\nabla_{\M}u_k-\nabla_{\M}u)d\sigma\\
-&\lambda\int_{\M}x_1^{p+1}x_2^p(|u_k|^{q-2}u_k-|u|^{q-2}u)(u_k-u)d\sigma
=:I_1-I_2
\end{align*}
Due to H\"older inequality, we derive that $I_2\leq \lambda T_1\cdot T_2$, with
 \[
 T_1:=(\int_{\M}|x_1^{\frac{N}{q}-\gamma_1}x_2^{\frac{N}{q}-\gamma_2}(u_k-u)|^qd\sigma)^{\frac{1}{q}}
\]
\[
  T_2:=(\int_{\M}|x_1^{p+1-(\frac{N}{q}-\gamma_1)}x_2^{p-(\frac{N}{q}-\gamma_2)}(|u_k|^{q-2}u_k-|u|^{q-2}u)|^{\frac{q}{q-1}} d\sigma)^{\frac{q-1}{q}}
\]
Since $\{u_k\}$ is bounded in $\mathcal{H}^{1,(\frac{N-1}{p},\frac{N}{p})}_{p,0}(\M)$ and $u_k\to u$ in $L_q^{\gamma_1,\gamma_2}(\M)$, we derive that
$T_1 \to 0$ and $T_2$ is bounded which implies $I_2 \to 0, \mbox{ as } k\to \infty.$
Then we arrive that
\begin{equation}\label{I1}
I_1=\int_{\M}P_k(x) d\sigma \to 0.
\end{equation}
where $P_k(x)=x_1(|\nabla_{\M}u_k|^{p-2}\nabla_{\M}u_k-|\nabla_{\M}u|^{p-2}\nabla_{\M}u)(x)(\nabla_{\M}u_k-\nabla_{\M}u)(x)$,
Here, denote the $i^{th}$ component of $\nabla_\M u$ by $(\nabla_\M u)_i$.
It is easy to verify that
 %\begin{equation}\label{Pk}
 $P_k(x)\geq 0; \mbox{ and  } P_k(x)>0, \mbox{ if  } \nabla_{\M} u_k \neq \nabla_{\M} u.$
% \end{equation}
In the following, we show that
\begin{equation}\label{ith}
(\nabla_\M u_k)_i \to (\nabla_\M u)_i \,\,\,\mbox{ for } 1\leq i\leq N, \mbox{ as } k\to \infty
\end{equation}
 a.e in $\textup{int} \M$, which can be deduced by contradiction. Assume, there exists a point $x_p\in \textup{int} \M$,
and its neighborhood $U_{x_p}$, such that for any $x_0\in U_{x_p}$,
\[
  \lim_{k\to \infty} \nabla_{\M}u_k(x_0)\neq \nabla_{\M}u(x_0).
\]
Since $ x_1(|\nabla_{\M}u_k|^{p-2}\nabla_{\M}u_k-|\nabla_{\M}u|^{p-2}\nabla_{\M}u)_i(x_0)(\nabla_{\M}u_k-\nabla_{\M}u)_i(x_0)\leq c,$
it follows that
\begin{align*}
&x_1(|\nabla_{\M}u_k|^{p-2}\nabla_{\M}u_k)_i(x_0)(\nabla_\M u_k)_i(x_0)\\
\leq & c+x_1(|\nabla_{\M}u_k|^{p-2}+|\nabla_{\M}u|^{p-2})(x_0)(\nabla_\M u_k)_i(x_0)(\nabla_\M u)_i(x_0),
\end{align*}
which indicates that
$\{x_1|\nabla_\M u_k(x_0)|^p\}$ is bounded. There exists a subsequence, here still denoted by $\{u_k\}$ such that
\[
  (\nabla_\M u_k)(x_0)\to \xi^\prime \neq \xi=\nabla_\M u(x_0), \mbox{ as } k\to \infty.
\]
This induces that
\[
  P_k(x_0)=x_1(|\nabla_{\M}u_k|^{p-2}\nabla_{\M}u_k-|\nabla_{\M}u|^{p-2}\nabla_{\M}u)(x_0)(\nabla_{\M}u_k-\nabla_{\M}u)(x_0)\to c_0> 0,
\]
for any $x_0\in U_{x_p}$, as $k\to \infty$. It follows that
\[
  I_1=\int_\M P_k(x)d\sigma \to c\neq 0, \mbox{ as } k\to \infty,
\]
which contradicts to \eqref{I1}, and then \eqref{ith} is obtained.
Applying Lemma \ref{BLR} to $(\nabla_\M u_k)_i$, for $1\leq i\leq N$, we have
\begin{equation}\label{BLRNA}
\lim_{k\to \infty}(\|\nabla_{\M}u_k\|^p_{L_p^{(\frac{N-1}{p},\frac{N}{p})}(\M)}-\|\nabla_{\M}u_k-\nabla_{\M}u\|^p_{L_p^{(\frac{N-1}{p},\frac{N}{p})}(\M)})
=\|\nabla_{\M}u\|^p_{L_p^{(\frac{N-1}{p},\frac{N}{p})}(\M)}
\end{equation}
To the end, what left is to show that
\begin{equation}\label{Lpcon}
\int_\M x_1|\nabla_\M u_k|^p d\sigma \to \int_\M x_1|\nabla_\M u|^p d\sigma, \mbox{ as } k\to \infty.
\end{equation}
Due to Egorov Theorem, we obtain that for any $\delta>0$, there exists a subset $E\subset \textup{int}\M$ with the measure $m(E)<\delta$, such that
\[
  (\nabla_\M u_k)_i \to (\nabla_\M u)_i \,\,\,\mbox{ for } 1\leq i\leq N,  \mbox{ as } k\to \infty,
\] uniformly on $\textup{int}\M\setminus E$. It follows that
\begin{equation}\label{eqBME}
\int_{\M\setminus E} x_1|\nabla_{\M} u_k|^p d\sigma \to \int_{\M\setminus E}x_1|\nabla_{\M} u|^p d\sigma, \mbox{ as } k\to \infty.
\end{equation}
Now we claim that for any $\varepsilon >0$, there is $\delta(\varepsilon)>0$, and a subset $E\subset \M$ with the measure $m(E)<\delta(\varepsilon)$, such that
\begin{equation}\label{eqE}
\int_E x_1|\nabla_{\M} u_k|^p d\sigma<\varepsilon.
\end{equation}
In fact,
\[
  o(1)=I_1
=\int_{\M} x_1(|\nabla_\M u_k|^{p-2}\nabla_\M u_k-|\nabla_\M u|^{p-2}\nabla_\M u)(\nabla_{\M} u_k-\nabla_{\M} u)d\sigma,
\]
which implies that for any $E\subset \M$, we have
\begin{align}\label{Eeq}
\int_E x_1|\nabla_{\M} u_k|^p d\sigma \leq \int_E x_1|\nabla_{\M} u_k|^{p-1}|\nabla_{\M} u|+x_1|\nabla_{\M} u|^{p-1}|\nabla_{\M} u_k|+ x_1|\nabla_{\M} u_k|^{p}d\sigma+o(1).
\end{align}
Applying H\"older inequality on \eqref{Eeq}, it verifies \eqref{eqE}.
Hence, for any $\varepsilon>0$ there exists $\delta(\varepsilon)>0$ and a subset $E\subset \textup{int}\B$, such that both \eqref{eqBME} and \eqref{eqE} hold.
This gives \eqref{Lpcon}
\end{proof}
The following two propositions verifies that the functional $J(u)$ satisfies the conditions $I_3$, $I_4$, $I_5$ in Lemma \ref{AR1} and Lemma \ref{AR2}.
\begin{Prop}
If $p< q < p^*$, then there exists $r>0$ such that
\begin{itemize}
\item[(i)] $J(u)>0$ if $0<\|u\|_{\mathcal{H}^{1,(\frac{N-1}{p},\frac{N}{p})}_{p,0}}<r$ \quad \textup{and} \quad $J(u)\geq \alpha>0$ if $\|u\|_{\mathcal{H}^{1,(\frac{N-1}{p},\frac{N}{p})}_{p,0}}=r$.
\item[(ii)] there exists $v\in \mathcal{H}^{1,(\frac{N-1}{p},\frac{N}{p})}_{p,0}$ such that $\|v\|_{\mathcal{H}^{1,(\frac{N-1}{p},\frac{N}{p})}_{p,0}}>r$ and $J(v)<\alpha$.
\end{itemize}
\end{Prop}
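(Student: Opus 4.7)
The plan is a standard mountain-pass geometry argument. The central observation is that, by choosing the weight data $\gamma_1' = \frac{N-1-p}{q}$ and $\gamma_2' = \frac{N-p}{q}$, one verifies directly from the definition of the weighted $L^q$-space that
\[
\int_\M x_1(x_1 x_2)^p |u|^q\, d\sigma = \|u\|^q_{L_q^{\gamma_1', \gamma_2'}(\M)},
\]
because $q(\frac{N}{q} - \gamma_1') = 1 + p$ and $q(\frac{N}{q} - \gamma_2') = p$. These indices satisfy the scaling conditions $\frac{N}{q} - \gamma_i' \geq \frac{N}{p} - \gamma_i$ of Remark \ref{embconti} under the hypothesis $p < q < p^*$, so the embedding $E := \mathcal{H}^{1, (\frac{N-1}{p}, \frac{N}{p})}_{p,0}(\M) \hookrightarrow L_q^{\gamma_1', \gamma_2'}(\M)$ is continuous with some constant $C > 0$.

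For part (i), I would use the fact that, by Proposition \ref{p-ineq} and the remark following it, $\int_\M x_1 |\nabla_\M u|^p\, d\sigma$ is (up to equivalent norms) the $p$th power of $\|u\|_E$. Combined with the above embedding this produces
\[
J(u) \geq \tfrac{1}{p}\|u\|_E^p - \tfrac{\lambda C^q}{q}\|u\|_E^q.
\]
Since $q > p$, the one-variable function $f(t) = \tfrac{1}{p}t^p - \tfrac{\lambda C^q}{q} t^q$ is strictly positive on some interval $(0, t_*)$ and attains a positive maximum in between. Picking any $r \in (0, t_*)$ and setting $\alpha := f(r) > 0$ yields both claimed inequalities.

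For part (ii), I would fix any $u_0 \in E$ with $u_0 \not\equiv 0$; since $u_0$ is nonzero on a set of positive measure, $\int_\M x_1(x_1 x_2)^p |u_0|^q\, d\sigma > 0$. Then
\[
J(t u_0) = \tfrac{t^p}{p}\int_\M x_1 |\nabla_\M u_0|^p\, d\sigma - \tfrac{\lambda t^q}{q}\int_\M x_1(x_1 x_2)^p |u_0|^q\, d\sigma \to -\infty
\]
as $t \to \infty$, precisely because $q > p$. Take $t$ so large that simultaneously $\|t u_0\|_E > r$ and $J(t u_0) < \alpha$, and set $v := t u_0$.

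The only subtle point is the bookkeeping step: computing the exact weights $(\gamma_1', \gamma_2')$ making the $q$-term an $L^q$-norm to the $q$th power and verifying that $p < q < p^*$ is enough to guarantee the embedding inequalities of Remark \ref{embconti}. Everything else is one-variable calculus on $f(t)$.
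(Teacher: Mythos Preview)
Your argument is correct and essentially identical to the paper's: both use the continuous embedding (the paper cites Lemma~\ref{compact}, you cite Remark~\ref{embconti} with the explicit weights $\gamma_1',\gamma_2'$) to obtain $J(u)\ge \tfrac1p\|u\|^p-\tfrac{c\lambda}{q}\|u\|^q$, then exploit $q>p$ for the one-variable analysis, and for (ii) scale a fixed nonzero element to drive $J$ to $-\infty$. Your bookkeeping of the weights and the observation that the embedding requires $q\le p(p+1)$ is exactly what the paper records as ``$q<p^*<p(p+1)$''.
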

\begin{proof}
According to both Lemma \ref{compact} and the condition $q<p*<p(p+1)$, it holds that
\[
  J(u)\geq \frac{1}{p}\|u\|^p_{\mathcal{H}^{1,(\frac{N-1}{p},\frac{N}{p})}_{p,0}}-\frac{c\lambda}{q}\|u\|^q_{\mathcal{H}^{1,(\frac{N-1}{p},\frac{N}{p})}_{p,0}}
=\|u\|^p_{\mathcal{H}^{1,(\frac{N-1}{p},\frac{N}{p})}_{p,0}}(\frac{1}{p}-\frac{c\lambda}{q}\|u\|^{q-p}_{\mathcal{H}^{1,(\frac{N-1}{p},\frac{N}{p})}_{p,0}})
\]
Let $r=(\frac{q}{2 p c\lambda})^{\frac{1}{q-p}}>0$, if $\|u\|_{\mathcal{H}^{1,(\frac{N-1}{p},\frac{N}{p})}_{p,0}}=r$, then
$J(u)\geq \alpha=\frac{1}{2p}r^p >0$ and if $0<\|u\|_{\mathcal{H}^{1,(\frac{N-1}{p},\frac{N}{p})}_{p,0}}<r$, then $J(u)>\alpha>0$. Then the condition $(i)$ is proved.
Set $\|u\|_{\mathcal{H}^{1,(\frac{N-1}{p},\frac{N}{p})}_{p,0}}=r$, and $\theta>0$, it holds that
$J(\theta u) \to -\infty$ as $\theta \to \infty$.
Therefore, by choosing a large enough positive constant $\theta_1$ such that $v=\theta_1 u$ and $\|v\|_{\mathcal{H}^{1,(\frac{N-1}{p},\frac{N}{p})}_{p,0}}>r$,
one has $J(v)<0<\alpha$, which implies the condition $(ii)$.
\end{proof}
Let $\{E_m\}_{m\in N}$ be a sequence of subspaces of $\mathcal{H}^{1,(\frac{N-1}{p},\frac{N}{p})}_{p,0}(\M)$, such that $\dim(E_m)=m$; $E_m \subset E_{m+1}$; $\mathcal{L}(\cup_{m\in \N} E_m)$ denotes the linear manifold generated by $\cup_{m\in \N} E_m$ which is dense in $E$. By $E_m^c$ we denote the algebraically and topologically complementary of $E_m$.
\begin{Prop}\label{Pbound}
Let $E_m\subset \mathcal{H}^{1,(\frac{N-1}{p},\frac{N}{p})}_{p,0}(\M)$ be defined as above, we have
\[
  P_m=E_m \cap \{ u\in \mathcal{H}^{1,(\frac{N-1}{p},\frac{N}{p})}_{p,0}(\M)| \,0\leq J(u)<+\infty \}
\] is a bounded set.
\end{Prop}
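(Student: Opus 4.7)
The plan is to exploit the sign condition $J(u) \geq 0$ to control the Sobolev norm by a weighted $L^q$ norm, and then invoke equivalence of norms on the finite-dimensional subspace $E_m$ to close the estimate. Since $q>p$, the two sides will scale differently in $\|u\|$, which is exactly what forces boundedness.

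First I would rewrite the nonlinear term as a weighted $L^q$ norm. Using $d\sigma=\frac{dx_1}{x_1}\frac{dx_2}{x_1x_2}dx'$, one checks that
\[
\int_\M x_1(x_1x_2)^p|u|^q d\sigma=\int_\M \bigl|x_1^{(p+1)/q}x_2^{p/q} u\bigr|^q d\sigma = \|u\|^q_{L_q^{\gamma_1',\gamma_2'}(\M)},
\]
where $\gamma_1'=\frac{N-p-1}{q}$ and $\gamma_2'=\frac{N-p}{q}$ are chosen so that $\frac{N}{q}-\gamma_1'=\frac{p+1}{q}$ and $\frac{N}{q}-\gamma_2'=\frac{p}{q}$. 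Since $p<q<p^*<p(p+1)$, one has $\frac{p+1}{q}>\frac{1}{p}=\frac{N}{p}-\frac{N-1}{p}$ and $\frac{p}{q}>0=\frac{N}{p}-\frac{N}{p}$, so Lemma \ref{compact} (or Remark \ref{embconti}) provides a continuous embedding $\mathcal{H}^{1,(\frac{N-1}{p},\frac{N}{p})}_{p,0}(\M)\hookrightarrow L_q^{\gamma_1',\gamma_2'}(\M)$, ensuring the norm on $E_m$ is well defined.

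Now take $u\in P_m$. The condition $J(u)\geq 0$ combined with the identity above gives
\[
\frac{1}{p}\,\|u\|^p_{\mathcal{H}^{1,(\frac{N-1}{p},\frac{N}{p})}_{p,0}(\M)}\;\geq\;\frac{\lambda}{q}\,\|u\|^q_{L_q^{\gamma_1',\gamma_2'}(\M)}.
\]
Since $E_m$ is finite dimensional and $\|\cdot\|_{L_q^{\gamma_1',\gamma_2'}(\M)}$ restricts to a genuine norm on $E_m$ (the embedding being continuous and injective), all norms on $E_m$ are equivalent; hence there exists a constant $c_m>0$ (depending on $m$ but not on $u$) such that
\[
\|u\|_{L_q^{\gamma_1',\gamma_2'}(\M)}\;\geq\; c_m\,\|u\|_{\mathcal{H}^{1,(\frac{N-1}{p},\frac{N}{p})}_{p,0}(\M)}\qquad\text{for every }u\in E_m.
\]

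Substituting this lower bound into the previous inequality yields
\[
\frac{1}{p}\,\|u\|^p_{\mathcal{H}^{1,(\frac{N-1}{p},\frac{N}{p})}_{p,0}(\M)}\;\geq\;\frac{\lambda c_m^q}{q}\,\|u\|^q_{\mathcal{H}^{1,(\frac{N-1}{p},\frac{N}{p})}_{p,0}(\M)},
\]
so, because $q>p$,
\[
\|u\|_{\mathcal{H}^{1,(\frac{N-1}{p},\frac{N}{p})}_{p,0}(\M)}\;\leq\;\Bigl(\tfrac{q}{\lambda p\,c_m^q}\Bigr)^{1/(q-p)},
\]
a bound depending only on $m$, $p$, $q$, $\lambda$. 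Thus $P_m$ is bounded. The only nontrivial step is noticing that the nonlinear term is precisely a weighted $L^q$ norm covered by the embedding of Lemma \ref{compact}; the rest is the standard finite-dimensional norm equivalence argument, using $q>p$ to trap $\|u\|$.
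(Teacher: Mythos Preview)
Your argument is correct and is exactly the standard proof the paper has in mind; indeed the paper omits the proof entirely, writing ``We omit the easy proof of Proposition \ref{Pbound} here for the limit length of writing.'' Your identification of the nonlinear term as $\|u\|^q_{L_q^{(\frac{N-p-1}{q},\frac{N-p}{q})}}$ and the use of the inequality $q<p^*<p(p+1)$ to invoke Lemma \ref{compact} matches precisely how the paper handles this term in the preceding proposition, and the finite-dimensional norm equivalence together with $q>p$ is the expected way to close the estimate.
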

We omit the easy proof of Proposition \ref{Pbound} here for the limit length of writing. Set
\[
  A_0=\{u\in \mathcal{H}^{1,(\frac{N-1}{p},\frac{N}{p})}_{p,0}(\M)~|~ 0\leq J(u) <+\infty\} \quad
     B=\{u\in\mathcal{H}^{1,(\frac{N-1}{p},\frac{N}{p})}_{p,0}(\M)~|~ \|u\|_{\mathcal{H}^{1,(\frac{N-1}{p},\frac{N}{p})}_{p,0}}\leq 1\}.
\]
\[
  \Gamma:=\{h\in C(\mathcal{H}^{1,(\frac{N-1}{p},\frac{N}{p})}_{p,0}(\M),\mathcal{H}^{1,(\frac{N-1}{p},\frac{N}{p})}_{p,0}(\M))~|~ h(0)=0;\, h \,\textup{is odd homeomorphism};\, h(B)\subset A_0\},
\]
\[
\Gamma_m=\{K\subset \mathcal{H}^{1,(\frac{N-1}{p},\frac{N}{p})}_{p,0}(\M)| K \textup{compact}; K=-K; \gamma(K\cap h(\partial B))\geq m, \forall h\in \Gamma\}.
\]

Combining Lemma \ref{AR1} and Lemma \ref{AR2}, it completes the proof of Theorem \ref{T2}.

\subsection{The proof of Theorem \ref{geqC1} }
In this proof, the following definition and lemma will employed.
\begin{Def}\label{geqM}
Define the manifold $M$ as follows
\[
 M=\{u\in \mathcal{H}^{1,(\frac{N-1}{p},\frac{N}{p})}_{p,0}(\M)\setminus \{0\}~|~\, \|u\|^p_{\mathcal{H}^{1,(\frac{N-1}{p},\frac{N}{p})}_{p,0}}=\lambda \int_\M x_1^{p+1}x_2^p |u|^q d\sigma\}.
\]
\end{Def}

\begin{Lem}\label{betau}
For any $u\in \mathcal{H}^{1,(\frac{N-1}{p},\frac{N}{p})}_{p,0}(\M)\setminus \{0\}$, there exists a unique
\[
  \beta:=\beta(u)\geq 0\quad \textup{such that}\quad \beta u\in M.
\]
 The maximum of $J(\beta u)$ for
$\beta \geq 0$ is achieved at $\beta=\beta(u)>0$. The function
$u\mapsto \beta=\beta(u)$ is continuous.
 %and the map $u\to \beta(u) u $ defines a homeomorphism of the unit sphere in $\mathcal{H}^{1,(\frac{N-1}{p},\frac{N}{p})}_{p,0}(\M)$ with the manifold $M$ in Definition \ref{geqM}.
\end{Lem}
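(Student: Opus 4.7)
The plan is to apply the standard Nehari-manifold / fibering argument. Fix $u \in \mathcal{H}^{1,(\frac{N-1}{p},\frac{N}{p})}_{p,0}(\M)\setminus\{0\}$ and abbreviate
\[
A(u) := \int_\M x_1 |\nabla_\M u|^p \, d\sigma, \qquad B(u) := \int_\M x_1^{p+1} x_2^p |u|^q \, d\sigma,
\]
so that $J(\beta u) = \frac{\beta^p}{p} A(u) - \frac{\lambda \beta^q}{q} B(u)$ for $\beta \geq 0$. By Proposition \ref{p-ineq} the quantity $A(u)$ is (a power of) an equivalent norm on $\mathcal{H}^{1,(\frac{N-1}{p},\frac{N}{p})}_{p,0}(\M)$, and in particular $A(u)>0$. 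For $u\not\equiv 0$ the weight $x_1^{p+1}x_2^p$ is strictly positive on $\mathrm{int}\,\M$, so $B(u)>0$ as well. Both functionals $A$ and $B$ are continuous on the ambient space: $A$ trivially, and $B$ via the continuous weighted embedding into the appropriately weighted $L_q$-space provided by Remark \ref{embconti}.

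The fibering map $\varphi(\beta) := J(\beta u)$ has derivative
\[
\varphi'(\beta) = \beta^{p-1}\!\left(A(u) - \lambda \beta^{q-p} B(u)\right),
\]
so, because $p<q$, the unique positive zero of $\varphi'$ is
\[
\beta(u) = \left(\frac{A(u)}{\lambda B(u)}\right)^{\frac{1}{q-p}}>0.
\]
One then observes $\varphi(0)=0$, $\varphi(\beta)>0$ for sufficiently small $\beta>0$ (the $\beta^p$-term dominates), and $\varphi(\beta)\to-\infty$ as $\beta\to\infty$ (the $-\beta^q$-term dominates), so $\beta(u)$ is the unique strict global maximum of $\varphi$ on $[0,\infty)$. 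Multiplying the identity $A(u)=\lambda\beta(u)^{q-p}B(u)$ by $\beta(u)^p$ produces exactly the Nehari constraint defining $M$, hence $\beta(u)u\in M$. Uniqueness of a nonnegative $\beta$ with $\beta u\in M$ is forced by the same algebra, since any positive solution must satisfy the same scalar equation with only one root.

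Continuity of the map $u\mapsto\beta(u)$ then follows from the explicit formula above: $A$ and $B$ are continuous on $\mathcal{H}^{1,(\frac{N-1}{p},\frac{N}{p})}_{p,0}(\M)$, and $B$ is locally bounded below by a positive constant near any $u_0\neq 0$ (by continuity together with $B(u_0)>0$), so $\beta(u)=(A(u)/(\lambda B(u)))^{1/(q-p)}$ is a continuous composition on $\mathcal{H}^{1,(\frac{N-1}{p},\frac{N}{p})}_{p,0}(\M)\setminus\{0\}$. The only point that genuinely uses the structure of this paper rather than pure one-variable scaling is the continuity of $B$: one needs the continuous embedding with weights matching $x_1^{p+1}x_2^p$, which is guaranteed by Remark \ref{embconti} in the regime $p<q<p^*$. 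Beyond that verification, the argument is a routine analysis of the scalar function $\beta\mapsto J(\beta u)$.
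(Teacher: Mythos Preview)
Your argument is correct and follows essentially the same fibering/Nehari approach as the paper: define $g(\beta)=J(\beta u)$, observe $g(0)=0$, $g>0$ for small $\beta$, $g<0$ for large $\beta$, and locate the unique critical point, which lies on $M$. The only cosmetic difference is that you solve explicitly for $\beta(u)=(A(u)/(\lambda B(u)))^{1/(q-p)}$ and read off continuity from this formula together with the continuity of $A$ and $B$, whereas the paper argues continuity by a subsequence argument (boundedness of $\beta(u_n)$ plus passing to the limit in the defining equation); both routes rest on the same ingredient, namely the continuous embedding of Remark~\ref{embconti} ensuring $B(u_n)\to B(u)>0$.
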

\begin{proof}
Let $u\in \mathcal{H}^{1,(\frac{N-1}{p},\frac{N}{p})}_{p,0}(\M)\setminus \{0\}$ be fixed, define
%\begin{equation}\label{gbeta}
$g(\beta):=J(\beta u)$ on $ [0,\infty).$
%\end{equation}
Then it follows that
\begin{equation}\label{gprime}
g^\prime(\beta)=0 \Longleftrightarrow \beta u\in M \Longleftrightarrow \| u\|^p_{\mathcal{H}^{1,(\frac{N-1}{p},\frac{N}{p})}_{p,0}}
=\frac{1}{\beta^p}\int_\M x_1^{p+1}x_2^p |\beta u|^q d\sigma.
\end{equation}
It is obvious that $g(0)=0$; $g(\beta)>0$ for $\beta>0$ small enough; and $g(\beta)<0$ for $\beta>0$ large.
Therefore, $\max_{[0,\infty)}g(\beta)$ is achieved at a unique $\beta=\beta(u)$ such that
$g^\prime(\beta)=0$ and $\beta u\in M.$

 To prove the continuity of $\beta(u)$, let us assume that
$u_n \to u$ in $ \mathcal{H}^{1,(\frac{N-1}{p},\frac{N}{p})}_{p,0}(\M)\setminus \{0\}$.
Then $\{\beta(u_n)\}$ is bounded. If a subsequence of $\{\beta(u_n)\}$ converges to $\beta_0$, then it follows from the right side of \eqref{gprime} that
$\beta_0=\beta(u)$.
%Finally, the continuous map from the unit sphere of $\mathcal{H}^{1,(\frac{N-1}{p},\frac{N}{p})}_{p,0}(\M)$ to $M$, $u\mapsto \beta(u)u$ is inverse to the retraction$u\mapsto u/\|u\|_{\mathcal{H}^{1,(\frac{N-1}{p},\frac{N}{p})}_{p,0}(\M)}$.
\end{proof}
By Definition \ref{geqM}, there exists $r>0$, such that
  \begin{equation}\label{geqrinM}
  \int_\M x_1^{p+1}x_2^p|u|^q d\sigma>r, \quad \textup{for any}\,\,\, u\in M.
  \end{equation}
Indeed, if $u\in M$, then by Lemma \ref{compact}, it follows
\begin{equation}\label{eq:4-1}
 \|u\|^p_{\mathcal{H}^{1,(\frac{N-1}{p},\frac{N}{p})}_{p,0}}=\lambda\|u\|^q_{L_q^{(\frac{N-p-1}{q},\frac{N-p}{q})}}\leq c\lambda\|u\|^q_{\mathcal{H}^{1,(\frac{N-1}{p},\frac{N}{p})}_{p,0}}.
\end{equation}
For $q>p$, then it follows $\|u\|^p_{\mathcal{H}^{1,(\frac{N-1}{p},\frac{N}{p})}_{p,0}}\geq (\frac{1}{c\lambda})^{\frac{p}{q-p}}$. Set $r=\frac{1}{2\lambda}(\frac{1}{c\lambda})^{\frac{p}{q-p}}$, it holds that \eqref{geqrinM}.

Let $d_m=\inf\{\|u\|_{\mathcal{H}^{1,(\frac{N-1}{p},\frac{N}{p})}_{p,0}}~|~u\in M\cap E_m^c\}$, then we claim that
\begin{equation}\label{dm}
 d_m\to \infty \quad \textup{as} \quad m\to \infty.
\end{equation}
In fact, if there exist $d>0$ and $u_m\in M\cap E_m^c$, such that
$\|u_m\|_{\mathcal{H}^{1,(\frac{N-1}{p},\frac{N}{p})}_{p,0}}\leq d $  for all $m\in \N_+$.
Then there exists $u\in \mathcal{H}^{1,(\frac{N-1}{p},\frac{N}{p})}_{p,0}(\M)$, such that
$u_m \rightharpoonup u$ in $\mathcal{H}^{1,\frac{N}{p}}_{p,0}(\M)$.
Since $u_m\in E_m^c$, and $\mathcal{L}(\cup_{m\in\N} E_m)$ is dense in $\mathcal{H}^{1,(\frac{N-1}{p},\frac{N}{p})}_{p,0}(\M)$, then we have $u=0$.
According to Lemma \ref{compact}, it follows that
$u_m \to 0$ in $L_q^{(\frac{N-p-1}{q},\frac{N-p}{q})}(\M)$.
This is a contradiction to \eqref{geqrinM}. That means $d_m$ will be unbounded as $m\to\infty$,
which proves the claim \eqref{dm}.

Next, for some $R>1$,  we define a homeomorphism
\begin{equation}\label{hm}
 h_m=R^{-1}d_m u: E_m^c\to E_m^c
\end{equation}
By Lemma \ref{betau}, let $\beta:=\beta(u)$ such that $\beta u\in M$.
Set \[
  B=\{u\in \mathcal{H}^{1,(\frac{N-1}{p},\frac{N}{p})}_{p,0}(\M)~|~ \|u\|_{\mathcal{H}^{1,(\frac{N-1}{p},\frac{N}{p})}_{p,0}}\leq 1\}.
\]For $u_1 \in E_m^c\cap B$, $u_1\neq 0$ and $R>1$, we have
\begin{equation}\label{beta}
 R^{-1}d_m<d_m=\inf\{\|u\|_{\mathcal{H}^{1,(\frac{N-1}{p},\frac{N}{p})}_{p,0}}~|~u\in M\cap E_m^c\}\leq \|\beta u_1\|_{\mathcal{H}^{1,(\frac{N-1}{p},\frac{N}{p})}_{p,0}}\leq \beta:=\beta(u_1).
\end{equation}
It follows that
\begin{equation}\label{hminA}
h_m(E_m^c\cap B)\subset A_0:=\{u\in \mathcal{H}^{1,(\frac{N-1}{p},\frac{N}{p})}_{p,0}(\M)~|~ 0\leq J(u) <+\infty\}.
\end{equation}
In fact, if $u\in E_m^c\cap B$, with $\beta$ chosen as above, such that $\beta u \in M$ and $d_m\leq \beta$, then
\begin{align}\label{4-4}
&J(h_m(u))=\frac{1}{p}(R^{-1}d_m)^p\|u\|^p_{\mathcal{H}^{1,(\frac{N-1}{p},\frac{N}{p})}_{p,0}}-\frac{\lambda}{q}(\frac{R^{-1}d_m}{\beta})^q\|\beta u\|^q_{L_q^{(\frac{N-p-1}{q},\frac{N-p}{q})}}\notag\\
&= \frac{1}{p}(R^{-1}d_m)^p\|u\|^p_{\mathcal{H}^{1,(\frac{N-1}{p},\frac{N}{p})}_{p,0}}-\frac{1}{q}(\frac{R^{-1}d_m}{\beta})^q \|\beta u\|^p_{\mathcal{H}^{1,(\frac{N-1}{p},\frac{N}{p})}_{p,0}}.
\end{align}
Then let $R$ be large enough, it gives $J(h_m(u))\geq 0$, which proves that \eqref{hminA}.

Therefore, we can define
\begin{displaymath}
\widetilde{h_m}(u)=\left\{\begin{array}{ll}
h_m(u) &\textup{if}\,\,\, u\in E_m^c\\
\varepsilon e_j, j=1,2,...,m \,\,\,\textup{and}\,\,\, \{e_j\}_{j=1}^m\,\,\, \textup{is basis of}\,\,\, E_m & \textup{if}\,\,\, u\in E_m
\end{array}\right.
\end{displaymath} for $\varepsilon$ small enough. In this way, it is shown that for $R$ large enough, the mapping $h_m$ in \eqref{hm} defined on $E_m^c$ admits
an extension $\widetilde{h_m}\in \Gamma$ for each $m$.
Finally, we take $u\in \partial B\cap E_m^c$, then
\begin{align}\label{F}
J(\widetilde{h_m}(u))= (R^{-1}d_m)^p\big(\frac{1}{p}-\frac{1}{q}(\frac{R^{-1}d_m}{\beta})^{q-p}\big)\|u\|^p_{\mathcal{H}^{1,(\frac{N-1}{p},\frac{N}{p})}_{p,0}}
\end{align}
where the calculus in \eqref{F} is the same as that in \eqref{4-4}. Since $d_m\leq \beta:=\beta(u)$ proved in \eqref{beta}, then we choose $R$ large enough to deduce that

\[
  J(\widetilde{h_m}(u))\geq \frac{1}{2p}(R^{-1}d_m)^p\to \infty \quad \textup{as}\quad m\to \infty.
\]
Since $\{c_m\}$ is critical value sequence of $J$ (as defined by \eqref{eq:cm}), thus we have
$c_m\to \infty $ as $m \to \infty$. Theorem \ref{geqC1} is proved.

\section{The case of $p=q$}
\subsection{The proof of Theorem \ref{T1}}
The idea of Lusternik-Schnirelman theory in \cite{Amann} is adapted here for the proof. Consider the following two operators,
\begin{equation}\label{BB}
B(u)=\frac{1}{p}\int_{\M}x_1^{p+1}x_2^p |u|^p d\sigma: \mathcal{H}_{p,0}^{1,(\frac{N-1}{p},\frac{N}{p})}(\mathbb{M}) \to \R
\end{equation}
\begin{equation}\label{LB}
b(u)=x_1^{p+1}x_2^p |u|^{p-2}u:\mathcal{H}_{p,0}^{1,(\frac{N-1}{p},\frac{N}{p})}(\mathbb{M}) \to  \mathcal{H}_{p}^{-1,(-\frac{N-1}{p},-\frac{N}{p})}(\mathbb{M})
\end{equation}
where $\mathcal{H}_{p}^{-1,(-\frac{N-1}{p},-\frac{N}{p})}(\mathbb{M})$ is the dual space of $\mathcal{H}_{p,0}^{1,(\frac{N-1}{p},\frac{N}{p})}(\mathbb{M})$ with the norm as follows
\[
  \|g\|_{\mathcal{H}_{p}^{-1,(-\frac{N-1}{p},-\frac{N}{p})}}=\sup_{\varphi}\frac{|<g,\varphi>|}{\|\varphi\|_{\mathcal{H}_{p,0}^{1,(\frac{N-1}{p},\frac{N}{p})}}}
\]
\begin{Lem}\label{Bsprop}
We have the following properties of the above two operators.
\begin{itemize}
\item[\textup{(i)}] The operator $b$ defined in \eqref{LB} is odd, compact and uniformly continuous on bounded sets.
\item[\textup{(ii)}] The funtional $B$ defined in \eqref{BB} is even and compact.
\end{itemize}
\end{Lem}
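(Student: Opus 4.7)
The oddness of $b$ and evenness of $B$ follow immediately from the pointwise identities $|-u|^{p-2}(-u)=-|u|^{p-2}u$ and $|-u|^p=|u|^p$, applied inside the integral (for $B$) and the duality pairing (for $b$). The substantive content is compactness, and the plan is to reduce both parts to the compact embedding supplied by Lemma \ref{compact} after identifying the correct target weighted Lebesgue space. Namely, with $(\gamma_1',\gamma_2'):=(\tfrac{N-p-1}{p},\tfrac{N-p}{p})$, one has $x_1^{N/p-\gamma_1'}x_2^{N/p-\gamma_2'}=x_1^{(p+1)/p}x_2$, and therefore $B(u)=\tfrac{1}{p}\|u\|^p_{L_p^{(\gamma_1',\gamma_2')}(\M)}$. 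With the domain weights $(\gamma_1,\gamma_2)=(\tfrac{N-1}{p},\tfrac{N}{p})$ the strict inequalities $\tfrac{N}{p}-\gamma_1'=1+\tfrac{1}{p}>\tfrac{1}{p}=\tfrac{N}{p}-\gamma_1$ and $\tfrac{N}{p}-\gamma_2'=1>0=\tfrac{N}{p}-\gamma_2$ are satisfied, and since $q=p<p^*$, Lemma \ref{compact} delivers a compact embedding $\mathcal{H}^{1,(\frac{N-1}{p},\frac{N}{p})}_{p,0}(\M)\hookrightarrow L_p^{(\gamma_1',\gamma_2')}(\M)$.

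Part (ii) is then immediate: if $u_k\rightharpoonup u$ in $\mathcal{H}^{1,(\frac{N-1}{p},\frac{N}{p})}_{p,0}(\M)$, the compact embedding yields $u_k\to u$ strongly in $L_p^{(\gamma_1',\gamma_2')}(\M)$, and continuity of the $p$-th power of the norm gives $B(u_k)\to B(u)$. For part (i) the plan is to establish a local Lipschitz estimate
\begin{equation*}
\|b(u)-b(v)\|_{\mathcal{H}^{-1,(-\frac{N-1}{p},-\frac{N}{p})}_{p}}\leq C_R\,\|u-v\|_{L_p^{(\gamma_1',\gamma_2')}(\M)},\qquad \|u\|_{\mathcal{H}^{1}_p},\|v\|_{\mathcal{H}^{1}_p}\leq R,
\end{equation*}
which together with the compact embedding at once yields both uniform continuity of $b$ on bounded sets and its compactness (extract a strongly convergent subsequence in $L_p^{(\gamma_1',\gamma_2')}$ and apply the estimate). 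The estimate is produced by applying Hölder's inequality with conjugate exponents $p'=p/(p-1)$ and $p$ to the duality pairing $\langle b(u)-b(v),\varphi\rangle$, splitting the weight as $x_1^{p+1}x_2^p=x_1^{(p+1)(p-1)/p}x_2^{p-1}\cdot x_1^{(p+1)/p}x_2$ so that the second factor exactly reproduces the $L_p^{(\gamma_1',\gamma_2')}$-norm of $\varphi$, which by Remark \ref{embconti} is controlled by $\|\varphi\|_{\mathcal{H}^{1}_p}$.

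The remaining factor, after the change of variable $f:=x_1^{(p+1)/p}x_2\cdot u$ (for which all weights cancel exactly), reduces to $\||f_u|^{p-2}f_u-|f_v|^{p-2}f_v\|_{L^{p'}(d\sigma)}$. Since $p>2$, the standard pointwise inequality $\bigl||a|^{p-2}a-|b|^{p-2}b\bigr|\leq C_p(|a|^{p-2}+|b|^{p-2})|a-b|$ combined with a second Hölder split with exponents $(p-1)/(p-2)$ and $p-1$ bounds this by $C(\|f_u\|_{L^p}+\|f_v\|_{L^p})^{p-2}\|f_u-f_v\|_{L^p}$; restricting to the $R$-ball yields the claimed Lipschitz bound. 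The main technical obstacle is the weight bookkeeping behind the factorization $x_1^{p+1}x_2^p=x_1^{(p+1)(p-1)/p}x_2^{p-1}\cdot x_1^{(p+1)/p}x_2$ and the change of variable $u\mapsto f$: one must verify that every $x_1$- and $x_2$-power in the resulting integrand cancels so that the Nemytskii estimate can be applied on the unweighted $L^p(d\sigma)$. Once this arithmetic is in place, (i) and (ii) follow as routine applications of Lemma \ref{compact}.
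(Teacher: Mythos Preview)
Your argument is correct and, for part (i), proceeds along a cleaner route than the paper's. Both proofs rest on the compact embedding of Lemma \ref{compact} into $L_p^{(\frac{N-p-1}{p},\frac{N-p}{p})}(\M)$, and for part (ii) they are essentially identical. For part (i), however, the paper bounds $|u_0+\delta|^{p-2}(u_0+\delta)-|u_0|^{p-2}u_0$ by a binomial expansion (which, as written, tacitly treats $p$ as an integer) and obtains a multi-term estimate $\sum_{l=1}^{p-1}\|u_1-u_0\|^l$; it then proves compactness of $b$ separately by extracting an a.e.\ convergent subsequence and invoking dominated convergence. Your approach replaces the binomial step by the standard pointwise inequality $\bigl||a|^{p-2}a-|b|^{p-2}b\bigr|\le C_p(|a|^{p-2}+|b|^{p-2})|a-b|$, valid for all $p>2$, and the substitution $f=x_1^{(p+1)/p}x_2\,u$ reduces the weighted estimate to an unweighted Nemytskii bound. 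The resulting local Lipschitz control of $b$ by $\|u-v\|_{L_p^{(\gamma_1',\gamma_2')}}$ then yields \emph{both} uniform continuity on bounded sets and compactness in one stroke via the compact embedding, bypassing the a.e.\ and dominated-convergence machinery entirely. The paper's approach has the minor advantage of being self-contained, while yours is shorter, handles non-integer $p$ without modification, and makes the dependence on Lemma \ref{compact} more transparent.
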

\begin{proof}
It is obvious that $B$ is even and $b$ is odd. First we verify the uniformly continuity of $b$ in bounded set.
Let $u_1, u_0$ be in bounded set in $\mathcal{H}_{p,0}^{1,(\frac{N-1}{p},\frac{N}{p})}(\mathbb{M})$, and set $\delta:=u_1-u_0 \in \mathcal{H}_{p,0}^{1,(\frac{N-1}{p},\frac{N}{p})}(\mathbb{M})$, then
for any $\varphi\in \mathcal{H}_{p,0}^{1,(\frac{N-1}{p},\frac{N}{p})}(\mathbb{M})$ we have that
\[
  |<b(u_1)-b(u_0),\varphi>|=|\int_{\M} x_1^{p+1}x_2^p(|u_0+\delta|^{p-2}(u_0+\delta)-|u_0|^{p-2}u_0)\varphi d\sigma|
\]
where the binomial theorem implies that
\begin{align}\label{binom}
&|u_0+\delta|^{p-2}(u_0+\delta)-|u_0|^{p-2}u_0
=|\sum_{l=1}^{p-2}C_{p-2}^l u_0^{p-2-l}\delta^l+u_0^{p-2}|(u_0+\delta)-|u_0|^{p-2}u_0 \notag \\
\leq&|\sum_{l=1}^{p-2}C_{p-2}^l u_0^{p-1-l}\delta^l|+|\sum_{l=1}^{p-2}C_{p-2}^l u_0^{p-2-l}\delta^{l+1}|+|u_0^{p-2}\delta|
\leq C \sum_{l=1}^{p-1}| u_0^{p-1-l}\delta^l|.
\end{align}
Then applying H\"older inequality and Lemma \ref{compact}, it implies that
\begin{align}\label{Holdercomp}
 &|<b(u_1)-b(u_0),\varphi>|\leq C\sum_{l=1}^{p-1}\int_{\M}|x_1^{p+1}x_2^p u_0^{p-1-l} \delta^l \varphi|d\sigma \notag \\
 &\leq C \sum_{l=1}^{p-1} (\int_{\M}|x_1^{\frac{N-p-1}{p}}x_2^{\frac{N}{p}-1}u_0|^p d\sigma)^{\frac{p-l-1}{p}}
(\int_\M|x_1^{\frac{N-p-1}{p}}x_2^{\frac{N}{p}-1}\delta|^{p})^{\frac{l}{p}}
(\int_\M|x_1^{\frac{N-p-1}{p}}x_2^{\frac{N}{p}-1}\varphi|^{p})^{\frac{1}{p}}\notag \\
&\leq C \big( \sum_{l=1}^{p-1}\|u_0\|_{ \mathcal{H}_{p,0}^{1,(\frac{N-1}{p},\frac{N}{p})}}^{p-1-l}\|\delta\|^l_{ \mathcal{H}_{p,0}^{1,(\frac{N-1}{p},\frac{N}{p})}}\big)\|\varphi\|_{ \mathcal{H}_{p,0}^{1,(\frac{N-1}{p},\frac{N}{p})}}
\end{align}
Due to the assumption that $u_1, u_2$ are in bounded set and $\delta=u_1-u_0$, we have
\begin{equation}\label{buconti}
 \|b(u_1)-b(u_2)\|_{\mathcal{H}_{p}^{-1,(-\frac{N-1}{p},-\frac{N}{p})}}:=\sup_{\varphi}
  \frac{|<b(u_1)-b(u_0),\varphi>|}{\|\varphi\|_{ \mathcal{H}_{p,0}^{1,(\frac{N-1}{p},\frac{N}{p})}}}\leq C \sum_{l=1}^{p-1}\|u_1-u_0\|^l_{\mathcal{H}_{p,0}^{1,(\frac{N-1}{p},\frac{N}{p})}}
\end{equation}
which verifies the uniformly continuity of $b$ in bounded set.

Now we show that $b$ is a compact operator. For $\{u_k\}$ is bounded in $\mathcal{H}_{p,0}^{1,(\frac{N-1}{p},\frac{N}{p})}(\mathbb{M})$,
then there exists a subsequence of $\{u_k\}$ such that $u_k \rightharpoonup u$ in $\mathcal{H}_{p,0}^{1,(\frac{N-1}{p},\frac{N}{p})}(\mathbb{M})$,
as $k\to \infty$. By choosing proper $\gamma_1$ and $\gamma_2$, Lemma \ref{compact} implies that
$u_k \to u$ in $L^{\gamma_1,\gamma_2}_{p}(\M)$, as $k\to \infty$.

Then we claim that there exists a subsequence holding that
\begin{equation}\label{ae}
  x_1^{\frac{N}{p}-\gamma_1}x_2^{\frac{N}{p}-\gamma_2}u_k \to x_1^{\frac{N}{p}-\gamma_1}x_2^{\frac{N}{p}-\gamma_2}u \quad \textup{a.e}\,\,\, \textup{in}\,\,\, \textup{int}\M.
\end{equation}
In fact, there is a subsequence $\{u_{k_j}\}$ such that
$\| u_{k_{j+1}}-u_{k_j}\|_{L^{\gamma_1,\gamma_2}_{p}} \leq \frac{1}{2^j}$, for $j=1,2,...$.
Let
\[
  x_1^{\frac{N}{p}-\gamma_1}x_2^{\frac{N}{p}-\gamma_2}v_k=
\sum_{j=1}^k|x_1^{\frac{N}{p}-\gamma_1}x_2^{\frac{N}{p}-\gamma_2}u_{k_{j+1}}-x_1^{\frac{N}{p}-\gamma_1}x_2^{\frac{N}{p}-\gamma_2}u_{k_j}|,
\] then  Minkowski inequality gives that
\[
  \|v_k\|_{L^{\gamma_1,\gamma_2}_{p}}\leq \sum_{j=1}^k\|u_{k_{j+1}}-u_{k_j}\|_{L^{\gamma_1,\gamma_2}_{p}}\leq 1.
\]
We set $x_1^{\frac{N}{p}-\gamma_1}x_2^{\frac{N}{p}-\gamma_2}v(x)=\lim_{k\to \infty} x_1^{\frac{N}{p}-\gamma_1}x_2^{\frac{N}{p}-\gamma_2}v_k(x)$.
By Fatou Lemma, it follows that
\[
  \int_\M |x_1^{\frac{N}{p}-\gamma_1}x_2^{\frac{N}{p}-\gamma_2} v(x)|^p d\sigma \leq \liminf_{k\to \infty}\int_\M |x_1^{\frac{N}{p}-\gamma_1}x_2^{\frac{N}{p}-\gamma_2} v_k(x)|^p d\sigma \leq 1
\]
The absolutely convergence implies that
\begin{align*}
x_1^{\frac{N}{p}-\gamma_1}x_2^{\frac{N}{p}-\gamma_2}u_{k_1}+
\sum_{j=1}^k(x_1^{\frac{N}{p}-\gamma_1}x_2^{\frac{N}{p}-\gamma_2}u_{k_{j+1}}-x_1^{\frac{N}{p}-\gamma_1}x_2^{\frac{N}{p}-\gamma_2}u_{k_j}) \to x_1^{\frac{N}{p}-\gamma_1}x_2^{\frac{N}{p}-\gamma_2}u(x)
\end{align*}
a.e in $\M$, which verifies the claim \eqref{ae}.

Then for any $v\in \mathcal{H}_{p,0}^{1,(\frac{N-1}{p},\frac{N}{p})}(\mathbb{M})$, we choose proper $\gamma_1^\prime$, $\gamma_2^\prime$, such that
Lemma \ref{compact} can be applied, it follows that
\begin{align}\label{compbuk}
|&<b(u_k)-b(u),v>|\notag \\
\leq& (\int_\M |x_1^{p+1-(\frac{N}{p}-\gamma^\prime_1)}x_2^{p-(\frac{N}{p}-\gamma^\prime_2)}(|u_k|^{p-2}u_k-|u|^{p-2}u)|^{\frac{p}{p-1}} d\sigma)^{\frac{p-1}{p}}
(\int_\M |x_1^{\frac{N}{p}-\gamma^\prime_1}x_2^{\frac{N}{p}-\gamma_2^\prime} v|^p d\sigma)^{\frac{1}{p}}\notag \\
\leq& C (\int_\M |x_1^{p+1-(\frac{N}{p}-\gamma^\prime_1)}x_2^{p-(\frac{N}{p}-\gamma^\prime_2)}(|u_k|^{p-2}u_k-|u|^{p-2}u)|^{\frac{p}{p-1}} d\sigma)^{\frac{p-1}{p}}
 \|v\|_{\mathcal{H}_{p,0}^{1,(\frac{N-1}{p},\frac{N}{p})}}.
\end{align}
Due to $x_1^{\frac{N}{p}-\gamma_1}x_2^{\frac{N}{p}-\gamma_2}u_k
\to x_1^{\frac{N}{p}-\gamma_1}x_2^{\frac{N}{p}-\gamma_2}u ~ \textup{a.e}~ \textup{in}~\textup{int}\M$, as $k\to \infty$, then we apply Lebesgue dominate
convergence theory to \eqref{compbuk}, and then get the compactness of the operator $b$.

For the compactness of the operator $B$, we take a bounded sequence $\{u_k\}$ in $\mathcal{H}_{p,0}^{1,(\frac{N-1}{p},\frac{N}{p})}(\mathbb{M})$,
then, as before, up to subsequence we have
$ u_k \to u$ in $ L_p^{\frac{N-1}{p}-1,\frac{N}{p}-1}(\M)$.
Then
\begin{align}
B(u_k)=\frac{1}{p}\|u_k\|^p_{L_p^{(\frac{N-1}{p}-1,\frac{N}{p}-1)}}
\to \frac{1}{p}\|u\|^p_{L_p^{(\frac{N-1}{p}-1,\frac{N}{p}-1)}}=B(u)
\end{align}
\end{proof}
The main idea of the proof is to obtain the critical points of $B(u)$ on the manifold
\begin{equation}\label{M}
 M=\{u\in \mathcal{H}_{p,0}^{1,(\frac{N-1}{p},\frac{N}{p})}(\mathbb{M})| \frac{1}{p}\int_\M x_1|\nabla_\M u|^p d\sigma = \alpha \}.
\end{equation}
here $\alpha>0$ is fixed. For each $u\in \mathcal{H}_{p,0}^{1,(\frac{N-1}{p},\frac{N}{p})}(\mathbb{M})\setminus \{0\}$,
we can find $\lambda(u)>0$ such that $\lambda(u) u\in M$, in
the following way
\begin{equation}\label{deflam}
 \lambda(u)=\bigg(\frac{p\alpha}{\int_\M x_1 |\nabla_\M u|^p d\sigma}\bigg)^{\frac{1}{p}}.
\end{equation}
Hence $\lambda: \mathcal{H}_{p,0}^{1,(\frac{N-1}{p},\frac{N}{p})}(\mathbb{M})\setminus \{0\}\to (0,+\infty)$.
It is obvious that $\lambda(u)$ is uniformly continuous on manifold $M$.
By direct computation, the derivative of $\lambda$ is as follows
\begin{equation}\label{deflamp}
 <\lambda^\prime(u), \varphi>=-(p\alpha)^{\frac{1}{p}}\bigg(\int_\M x_1|\nabla_\M u|^p d\sigma\bigg)^{-\frac{p+1}{p}}\int_\M x_1|\nabla_\M u|^{p-2}
 \nabla_\M u\cdot\nabla_\M \varphi d\sigma
\end{equation}
for any $\varphi\in \mathcal{H}_{p,0}^{1,(\frac{N-1}{p},\frac{N}{p})}(\mathbb{M})$.
Therefore, $\int_\M x_1|\nabla_\M u|^{p-2}\nabla_\M u\cdot\nabla_\M \varphi d\sigma=0$ implies $<\lambda^\prime(u), \varphi>=0$.
\begin{Lem}\label{lampunicon}
The functional $\lambda^\prime(\cdot)$ is uniformly continuous on  $M$.
\end{Lem}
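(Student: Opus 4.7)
\textbf{Proof plan for Lemma \ref{lampunicon}.} The plan is to show a Lipschitz estimate for $\lambda'$ on $M$, which is stronger than uniform continuity. The key simplification is that on the constraint manifold $M$ the quantity $\int_\M x_1|\nabla_\M u|^p d\sigma$ equals the fixed constant $p\alpha$, so the scalar prefactor in \eqref{deflamp} collapses and
\[
\langle \lambda'(u),\varphi\rangle \;=\; -\frac{1}{p\alpha}\int_\M x_1|\nabla_\M u|^{p-2}\nabla_\M u\cdot \nabla_\M\varphi\, d\sigma \qquad (u\in M).
\]
Therefore, for $u,v\in M$ and any $\varphi\in \mathcal{H}^{1,(\frac{N-1}{p},\frac{N}{p})}_{p,0}(\M)$,
\[
|\langle \lambda'(u)-\lambda'(v),\varphi\rangle| \;\leq\; \frac{1}{p\alpha}\int_\M x_1\bigl||\nabla_\M u|^{p-2}\nabla_\M u-|\nabla_\M v|^{p-2}\nabla_\M v\bigr|\cdot|\nabla_\M\varphi|\, d\sigma.
\]

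Next I would apply H\"older's inequality with exponents $p/(p-1)$ and $p$, splitting the weight as $x_1 = x_1^{(p-1)/p}\cdot x_1^{1/p}$, to pull out $\|\nabla_\M\varphi\|_{L_p^{((N-1)/p,N/p)}(\M)}$, which by Proposition \ref{p-ineq} is equivalent to $\|\varphi\|_{\mathcal{H}^{1,(\frac{N-1}{p},\frac{N}{p})}_{p,0}}$. What remains is to estimate
\[
T(u,v) := \int_\M x_1\bigl||\nabla_\M u|^{p-2}\nabla_\M u-|\nabla_\M v|^{p-2}\nabla_\M v\bigr|^{p/(p-1)} d\sigma.
\]
Using the elementary inequality $||a|^{p-2}a-|b|^{p-2}b|\leq C_p(|a|+|b|)^{p-2}|a-b|$ (valid since $p>2$) and a second H\"older with exponents $(p-1)$ and $(p-1)/(p-2)$, I would bound
\[
T(u,v) \;\leq\; C\Bigl(\int_\M x_1|\nabla_\M(u-v)|^p d\sigma\Bigr)^{1/(p-1)}\Bigl(\int_\M x_1(|\nabla_\M u|^p+|\nabla_\M v|^p)\, d\sigma\Bigr)^{(p-2)/(p-1)}.
\]
Because $u,v\in M$, the last bracket equals $(2p\alpha)^{(p-2)/(p-1)}$, a constant depending only on $p$ and $\alpha$.

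Combining these two H\"older estimates yields
\[
\|\lambda'(u)-\lambda'(v)\|_{\mathcal{H}_{p}^{-1,(-\frac{N-1}{p},-\frac{N}{p})}} \;\leq\; C(p,\alpha)\,\|u-v\|_{\mathcal{H}^{1,(\frac{N-1}{p},\frac{N}{p})}_{p,0}(\M)}
\]
for all $u,v\in M$, which gives uniform (in fact Lipschitz) continuity of $\lambda'$ on $M$. The only delicate step is the bookkeeping of exponents in the nested H\"older estimates; everything else reduces to the standard $p$-Laplacian monotonicity inequality and the fact that $M$ pins the gradient norm. I do not anticipate a real obstacle, since $p>2$ is guaranteed by the hypothesis of the theorem.
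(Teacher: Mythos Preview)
Your proposal is correct and follows essentially the same strategy as the paper: restrict to $M$, where the gradient norm is pinned, and then estimate the difference $|\nabla_\M u|^{p-2}\nabla_\M u-|\nabla_\M v|^{p-2}\nabla_\M v$ via H\"older. The paper invokes the binomial expansion from \eqref{binom}--\eqref{Holdercomp} and obtains a polynomial bound $\sum_{l=1}^{p-1}\|u-v\|^l$, whereas you use the sharper pointwise inequality $||a|^{p-2}a-|b|^{p-2}b|\le C_p(|a|+|b|)^{p-2}|a-b|$ and get an honest Lipschitz estimate; your version is cleaner and, unlike the binomial identity, does not tacitly assume $p$ is an integer.
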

\begin{proof}
Let $u_1, u_0$ be in $M$ defined in \eqref{M}, and set $u_1-u_0=:\delta \in \mathcal{H}_{p,0}^{1,(\frac{N-1}{p},\frac{N}{p})}(\mathbb{M})$. For
any $\varphi\in \mathcal{H}_{p,0}^{1,(\frac{N-1}{p},\frac{N}{p})}(\mathbb{M})$, we apply binomial theorem and Lemma \ref{compact} as we did
in \eqref{binom} and \eqref{Holdercomp}, it follows that
\begin{align*}
|<\lambda^\prime(u_1)-\lambda^\prime(u_0),\varphi>|
\leq C\sum_{l=1}^{p-1}\|u_0\|^{p-1-l}_{\mathcal{H}_{p,0}^{1,(\frac{N-1}{p},\frac{N}{p})}}\|\delta\|^{l}_{\mathcal{H}_{p,0}^{1,(\frac{N-1}{p},\frac{N}{p})}}
\|\varphi\|_{\mathcal{H}_{p,0}^{1,(\frac{N-1}{p},\frac{N}{p})}}
\end{align*}
which, as \eqref{buconti}, leads to the uniformly continuity of $\lambda^\prime(\cdot)$ in $M$.
\end{proof}
The next step is to construct a flow on $M$ (defined in \eqref{M}) related to the functional $B(u)$ and the corresponding deformation result allows us to apply the
min-max theory, see \cite{Rab2}. Let $D(u)$ denote the derivative of $B(\lambda(u)u)$ for $u\in \mathcal{H}_{p,0}^{1,(\frac{N-1}{p},\frac{N}{p})}(\mathbb{M})\setminus \{0\}$,
then we have for any $v\in \mathcal{H}_{p,0}^{1,(\frac{N-1}{p},\frac{N}{p})}(\mathbb{M})$
\[
  <D(u),v>=\frac{p\alpha}{\int_\M x_1|\nabla_\M u|^p d\sigma}\bigg(<b(u),v>-\frac{<b(u),u>}{\int_\M x_1|\nabla_\M u|^p d\sigma}
  \int_\M x_1|\nabla_\M u|^{p-2} \nabla_\M u \cdot \nabla_\M v d\sigma\bigg).
\]
where $D(u)\in \mathcal{H}_p^{-1,(-\frac{N-1}{p},-\frac{N}{p})}(\M)$.
If $u\in M$, then
\[
   <D(u),v>=<b(u),v>-\frac{<b(u),u>}{\int_\M x_1|\nabla_\M u|^p d\sigma}
  \int_\M x_1|\nabla_\M u|^{p-2} \nabla_\M u \cdot \nabla_\M v d\sigma.
\]
We claim that $D(u)$ is uniformly continuous in $M$. Since $b(u)$ and $\int_\M x_1|\nabla_\M u|^{p-2}\nabla_\M u\cdot \nabla_\M(\cdot)d\sigma$ are uniformly continuous on $M$ as proved in
Lemma \ref{Bsprop} and Lemma \ref{lampunicon}, then it is sufficient to verify that $<b(u),u>$ hold this property on $M$. In fact, let $u_1, u_0 \in M$,
and set $\delta:=u_1-u_0\in \mathcal{H}_{p,0}^{1,(\frac{N-1}{p},\frac{N}{p})}(\mathbb{M})$. Applying the binomial theorem, H\"older inequality and Lemma \ref{compact} as in \eqref{binom} and \eqref{Holdercomp}, we obtain that
\begin{align*}
|<b(u_1),u_1>-<b(u_0),u_0>|
\leq C \sum_{l=1}^p \|u_0\|^{p-1}_{\mathcal{H}_{p,0}^{1,(\frac{N-1}{p},\frac{N}{p})}}\|u_1-u_0\|^l_{\mathcal{H}_{p,0}^{1,(\frac{N-1}{p},\frac{N}{p})}}.
\end{align*}
which implies the uniformly continuity of $<b(u), u>$ and $D(u)$ on $M$.
Recall the definition of duality map.
\begin{Def}\label{dualmap}
Let $E$ be normed vector space, $E^*$ be the dual space of $E$. We set for every $x_0\in E$
\[
  \mathcal{J}(x_0)=\{ f_0\in E^*; \|f_0\|_{E^*}=\|x_0\|_E ~\mbox{and} ~<f_0,x_0>=\|x_0\|^2\}.
\] The map $x_0 \mapsto \mathcal{J}(x_0)$ is called the duality map from $E$ into $E^*$.
\end{Def}
According to the information of duality map in Chapter 1, \cite{Brez}, here define the duality map
\begin{equation}\label{bigJ}
 \mathcal{J}: \mathcal{H}^{-1,(-\frac{N-1}{p},-\frac{N}{p})}_p(\M) \to \mathcal{H}_{p,0}^{1,(\frac{N-1}{p},\frac{N}{p})}(\mathbb{M})
\end{equation}
 for all $f\in \mathcal{H}^{-1,(-\frac{N-1}{p},-\frac{N}{p})}_p(\M)$, such that $\mathcal{J}$ verifies
\begin{itemize}
\item[\textup{(i)}] $\|\mathcal{J}(f)\|_{\mathcal{H}_{p,0}^{1,(\frac{N-1}{p},\frac{N}{p})}(\mathbb{M})}=\|f\|_{\mathcal{H}^{-1,(-\frac{N-1}{p},-\frac{N}{p})}_p(\M)},$
\item[\textup{(ii)}]$<f,\mathcal{J}(f)>=\|f\|^2_{\mathcal{H}^{-1,(-\frac{N-1}{p},\frac{N}{p})}_p(\M)}$,
\item[\textup{(iii)}] $\mathcal{J}(\cdot)$ is uniformly continuous on bounded sets.
\end{itemize}
For each $u\in M$, we define the tangent component as follows
\begin{equation}\label{bigT}
 T(u)= \mathcal{J}(D(u))-\frac{\int_\M x_1|\nabla_\M u|^{p-2}\nabla_\M u\cdot \nabla_\M(\mathcal{J}(D(u)))d\sigma}{\int_\M x_1|\nabla_\M u|^pd\sigma} u
\end{equation}
such that
$T: M\to \mathcal{H}^{1,(\frac{N-1}{p},\frac{N}{p})}_{p,0}(\M)$ and
\[
 \int_\M x_1|\nabla_\M u|^{p-2}\nabla_\M u\cdot \nabla_\M(T(u))d\sigma=0
\] which implies that if $u\in M$ then
\begin{equation}\label{lambdapT}
<\lambda^\prime(u), T(u)>=0
\end{equation}
\begin{Lem}\label{Tprop}
The tangent component $T(u)$ processes the following properties
\begin{itemize}
\item[\textup(i)] $T(u)$ is odd,
\item[\textup(ii)] $T(u)$ is uniformly continuous on $M$,
\item[\textup(iii)] $T(u)$ is bounded on $M$.
\end{itemize}
\end{Lem}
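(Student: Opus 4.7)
The strategy is to decompose
\[
T(u)=T_1(u)-\rho(u)\,u,\qquad T_1(u):=\mathcal{J}(D(u)),\qquad \rho(u):=\frac{\int_\M x_1|\nabla_\M u|^{p-2}\nabla_\M u\cdot \nabla_\M(\mathcal{J}(D(u)))\,d\sigma}{\int_\M x_1|\nabla_\M u|^p\,d\sigma},
\]
and verify (i)--(iii) for each piece separately, leveraging the preparations already in place: Lemma \ref{Bsprop} for the operator $b$, Lemma \ref{lampunicon} for $\lambda'$, the discussion preceding the lemma where $D$ is shown uniformly continuous on $M$, and properties (i)--(iii) of $\mathcal{J}$ from \eqref{bigJ}.

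For (i), $b(-u)=-b(u)$ by Lemma \ref{Bsprop}(i), while $|\nabla_\M u|^{p-2}\nabla_\M u$ is odd in $u$, so the explicit formula for $D$ yields $D(-u)=-D(u)$. The duality map may be chosen odd directly from Definition \ref{dualmap} (if $f_0\in\mathcal{J}(x_0)$ then $-f_0\in\mathcal{J}(-x_0)$ and $\|-f_0\|=\|f_0\|=\|x_0\|=\|-x_0\|$), hence $T_1$ is odd. In $\rho$ the numerator is a product of two $u$-odd quantities, so it is $u$-even, and the denominator is $u$-even, whence $\rho(-u)=\rho(u)$ and $\rho(-u)(-u)=-\rho(u)u$. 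Summing, $T(-u)=-T(u)$.

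For (iii), on $M$ the denominator $\int_\M x_1|\nabla_\M u|^p\,d\sigma=p\alpha$ is constant, and by the remark following Proposition \ref{p-ineq} the norm $\|u\|_{\mathcal{H}^{1,(\frac{N-1}{p},\frac{N}{p})}_{p,0}}$ is controlled by a multiple of $(p\alpha)^{1/p}$. Lemma \ref{Bsprop}(i) gives $\|b(u)\|_{\mathcal{H}^{-1}_p}$ bounded on bounded sets, and H\"older's inequality yields
\[
\Bigl|\int_\M x_1|\nabla_\M u|^{p-2}\nabla_\M u\cdot \nabla_\M v\,d\sigma\Bigr|\leq (p\alpha)^{(p-1)/p}\,\|\nabla_\M v\|_{L_p^{(\frac{N-1}{p},\frac{N}{p})}},
\]
so $\|D(u)\|_{\mathcal{H}^{-1}_p}\leq C$ on $M$; by \eqref{bigJ}(i) the same bound holds for $\|T_1(u)\|$. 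A second application of H\"older bounds the numerator of $\rho(u)$, so $|\rho(u)|$ and hence $\|\rho(u)u\|$ are uniformly bounded on $M$.

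For (ii), the composition $T_1=\mathcal{J}\circ D$ is uniformly continuous on $M$: $D$ is uniformly continuous on $M$ (established just before the lemma) with values in a bounded subset of $\mathcal{H}^{-1}_p$ by (iii), and $\mathcal{J}$ is uniformly continuous on bounded sets by \eqref{bigJ}(iii). Uniform continuity of $\rho$ on $M$ follows from the identity
\[
\rho(u_1)-\rho(u_0)=\frac{1}{p\alpha}\!\int_\M\!x_1|\nabla_\M u_1|^{p-2}\nabla_\M u_1\!\cdot\!\nabla_\M(\mathcal{J}D(u_1)-\mathcal{J}D(u_0))\,d\sigma+\frac{1}{p\alpha}\!\int_\M\!x_1\bigl(|\nabla_\M u_1|^{p-2}\nabla_\M u_1-|\nabla_\M u_0|^{p-2}\nabla_\M u_0\bigr)\!\cdot\!\nabla_\M\mathcal{J}D(u_0)\,d\sigma,
\]
whose two pieces are controlled by H\"older and the uniform continuity of $\mathcal{J}\circ D$, respectively by the binomial expansion and H\"older argument of \eqref{binom}--\eqref{Holdercomp}. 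Finally,
\[
\|\rho(u_1)u_1-\rho(u_0)u_0\|\leq|\rho(u_1)|\,\|u_1-u_0\|+|\rho(u_1)-\rho(u_0)|\,\|u_0\|,
\]
both terms of which are uniformly controlled on $M$ by boundedness (iii) and the just-established uniform continuity of $\rho$.

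The main obstacle is the uniform continuity of $u\mapsto|\nabla_\M u|^{p-2}\nabla_\M u$ entering the numerator of $\rho$: since $p>2$ this map is not Lipschitz, but it is tamed pointwise by the binomial bound
\[
\bigl||u_0+\delta|^{p-2}(u_0+\delta)-|u_0|^{p-2}u_0\bigr|\leq C\sum_{l=1}^{p-1}|u_0|^{p-1-l}|\delta|^l
\]
applied to $\nabla_\M u_0,\nabla_\M u_1$, followed by H\"older and the embedding of Lemma \ref{compact}; the uniform bound on $\|u\|$ on $M$ then controls all intermediate factors, just as in the proofs of Lemmas \ref{Bsprop} and \ref{lampunicon}.
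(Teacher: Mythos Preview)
Your proof is correct and follows essentially the same approach as the paper: the decomposition $T=T_1-\rho\,u$, the oddness via $D$ odd and $\mathcal{J}$ odd, the boundedness via H\"older and property (i) of $\mathcal{J}$, and the uniform continuity via the composition $\mathcal{J}\circ D$ together with the binomial/H\"older scheme of \eqref{binom}--\eqref{Holdercomp}. The paper's own proof is considerably terser (for (ii) it simply refers back to \eqref{binom}--\eqref{Holdercomp}), so your version supplies exactly the details that are only gestured at there.
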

\begin{proof}
According to the definition of duality map and the fact that $D(u)$ is odd,
we arrive that $T(u)$ is odd. Since both $D(\cdot)$ and $\mathcal{J}(\cdot)$ are uniformly continuous on bounded set,
one can deduce that $T(u)$ is uniformly continuous on M by applying the very similar procedure as in \eqref{binom} and \eqref{Holdercomp}.

On the manifold $M$, by \eqref{bigT}, we have $\|T(u)\|_{\mathcal{H}^{1,(\frac{N-1}{p},\frac{N}{p})}_{p,0}}\leq  I_1+I_2$, with
\[
  I_1=\|\mathcal{J}(D(u))\|_{\mathcal{H}^{1,(\frac{N-1}{p},\frac{N}{p})}_{p,0}}, \quad I_2=\frac{|\int_\M x_1 |\nabla_\M u|^{p-2}\nabla_\M u\cdot \nabla_\M(\mathcal{J}(D(u)))d\sigma|}{|\int_\M x_1|\nabla_\M u|^pd\sigma|}\|u\|_{\mathcal{H}^{1,(\frac{N-1}{p},\frac{N}{p})}_{p,0}}
\]
By applying H\"older inequality and Lemma \ref{compact}, we obtain that
\[
  I_1=\|\mathcal{J}(D(u))\|_{\mathcal{H}^{1,(\frac{N-1}{p},\frac{N}{p})}_{p,0}}=\|D(u)\|_{\mathcal{H}^{-1,(-\frac{N-1}{p},-\frac{N}{p})}_{p}}\leq C \|u\|^{p-1}_{\mathcal{H}^{1,(\frac{N-1}{p},\frac{N}{p})}_{p,0}}
\]
\begin{align*}
  I_2&\leq \frac{\|u\|_{\mathcal{H}^{1,(\frac{N-1}{p},\frac{N}{p})}_{p,0}}^{p-1}\|\mathcal{J}(Du)\|_{\mathcal{H}^{1,(\frac{N-1}{p},\frac{N}{p})}_{p,0}}}
  {\|u\|_{\mathcal{H}^{1,(\frac{N-1}{p},\frac{N}{p})}_{p,0}}^{p}}\|u\|_{\mathcal{H}^{1,(\frac{N-1}{p},\frac{N}{p})}_{p,0}}
  &=\|\mathcal{J}(Du)\|_{\mathcal{H}^{1,(\frac{N-1}{p},\frac{N}{p})}_{p,0}}\\
  &=\|D(u)\|_{\mathcal{H}^{-1,(-\frac{N-1}{p},-\frac{N}{p})}_{p}}\leq C \|u\|^{p-1}_{\mathcal{H}^{1,(\frac{N-1}{p},\frac{N}{p})}_{p,0}}
\end{align*} Then we have that $T(u)$ is bounded on $M$.
\end{proof}
For all $u\in M$, there exists $\gamma_0>0$ and $t_0>0$ such that for all $(u,t)\in M\times [-t_0,t_0]$ it holds
$  \|u+tT(u)\|_{\mathcal{H}^{1,(\frac{N-1}{p},\frac{N}{p})}_{p,0}}\geq \gamma_0>0$.
As a consequence we define the flow
\begin{equation}\label{sigma}
\sigma(u,t):=\lambda(u+tT(u))\,(u+tT(u)) : M\times [-t_0,t_0] \to M
\end{equation}
Then $\sigma(u,t)$ verifies the following properties,
\begin{itemize}
\item[\textup(i)] $\sigma(u,t)$ is odd w.r.t $u$ for fixed $t$;
\item[\textup(ii)] $\sigma(u,t)$ is uniformly continuous with respect to $u$ on $M$;
\item[\textup(iii)] $\sigma(u,0)=u$ for $u\in M$.
\end{itemize}
Indeed, it is obvious that the properties (i) and (iii) of $\sigma(u,t)$ are hold. The uniformly continuity of $\sigma(u,t)$
can be induced from the uniformly continuity of both $\lambda(\cdot)$ and $T(\cdot)$.

In order to obtain the deformation result, we first discover the relation between the functional $B(u)$
and the flow $\sigma(u,t)$ on $M$.
\begin{Lem}\label{rus}
Let $\sigma(u,t)$ be defined in \eqref{sigma}. Then there exists
\[r: M\times [-t_0, t_0] \to \R\] such that $\lim_{\tau \to 0} r(u,\tau) =0$ uniformly on $M$ and
\begin{equation}\label{Bsigma}
B(\sigma(u,t))-B(u)=\int_0^t (\|D(u)\|_{\mathcal{H}^{-1,(-\frac{N-1}{p},-\frac{N}{p})}_{p}}^2+r(u,s))ds
\end{equation}
 for all $u \in M$, and $t\in [-t_0,t_0]$.
\end{Lem}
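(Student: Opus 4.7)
The plan is to write $B(\sigma(u,t))-B(u)$ as a one-variable integral, extract the leading-order contribution $\|D(u)\|^2$ using that $D(u)$ evaluated on $T(u)$ reproduces the squared dual norm, and package the rest into $r(u,s)$. The starting observation is that $B(\sigma(u,s))=F(u+sT(u))$ where $F(v):=B(\lambda(v)v)$ is precisely the functional whose Fr\'echet derivative is $D$. Since $B\in C^{1}$ on $\mathcal{H}^{1,(\frac{N-1}{p},\frac{N}{p})}_{p,0}(\M)$, and by the positivity bound $\|u+tT(u)\|\geq \gamma_{0}>0$ on $M\times[-t_{0},t_{0}]$ the map $v\mapsto F(v)$ stays smooth along the curve $s\mapsto u+sT(u)$, the fundamental theorem of calculus yields
\[
B(\sigma(u,t))-B(u)=\int_{0}^{t}\langle D(u+sT(u)),T(u)\rangle\,ds.
\]

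The next step is to evaluate the integrand at $s=0$. For $u\in M$ one has $\lambda(u)=1$ and the formula for $D(u)$ simplifies. Plugging in the definition \eqref{bigT} of $T(u)$ gives
\[
\langle D(u),T(u)\rangle=\langle D(u),\mathcal{J}(D(u))\rangle-\frac{\int_{\M}x_{1}|\nabla_{\M}u|^{p-2}\nabla_{\M}u\cdot\nabla_{\M}\mathcal{J}(D(u))\,d\sigma}{\int_{\M}x_{1}|\nabla_{\M}u|^{p}\,d\sigma}\,\langle D(u),u\rangle.
\]
Property (ii) of the duality map $\mathcal{J}$ identifies the first term with $\|D(u)\|^{2}_{\mathcal{H}^{-1,(-\frac{N-1}{p},-\frac{N}{p})}_{p}}$. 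A short algebraic check on the explicit formula for $\langle D(u),v\rangle$ with $v=u$ shows that $\langle D(u),u\rangle=0$, because the two summands inside the bracket both reduce to $\langle b(u),u\rangle$ up to a factor $\int_{\M}x_{1}|\nabla_{\M}u|^{p}\,d\sigma$ and cancel. Thus $\langle D(u),T(u)\rangle=\|D(u)\|^{2}$.

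The remainder is then forced to be
\[
r(u,s):=\langle D(u+sT(u)),T(u)\rangle-\|D(u)\|^{2}=\langle D(u+sT(u))-D(u),T(u)\rangle,
\]
and the identity \eqref{Bsigma} follows by adding and subtracting $\|D(u)\|^{2}$ inside the integral. By the dual pairing bound $|r(u,s)|\le \|D(u+sT(u))-D(u)\|_{\mathcal{H}^{-1,(-\frac{N-1}{p},-\frac{N}{p})}_{p}}\|T(u)\|_{\mathcal{H}^{1,(\frac{N-1}{p},\frac{N}{p})}_{p,0}}$, and Lemma \ref{Tprop}(iii) gives a uniform bound $\|T(u)\|\le K$ on $M$. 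So $u+sT(u)$ stays in a bounded $H^{1}$-tube of width $|s|K$ around $M$, bounded below in norm by $\gamma_{0}$ by the hypothesis preceding \eqref{sigma}.

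The main obstacle is the uniform convergence claim for $r(u,s)$. It reduces to showing $\|D(u+sT(u))-D(u)\|_{\mathcal{H}^{-1}_{p}}\to 0$ uniformly in $u\in M$ as $s\to 0$. This follows by combining: the uniform continuity of $b(\cdot)$ on bounded sets (Lemma \ref{Bsprop}(i)); the uniform continuity of $\langle b(u),u\rangle$ and of the map $u\mapsto \int_{\M}x_{1}|\nabla_{\M}u|^{p-2}\nabla_{\M}u\cdot\nabla_{\M}(\cdot)\,d\sigma$, which is already used in the paper to obtain uniform continuity of $D(u)$ on $M$; and the fact that the denominators $\int_{\M}x_{1}|\nabla_{\M}(u+sT(u))|^{p}\,d\sigma$ remain bounded above and below (above because $\|T(u)\|$ is bounded, below because of $\gamma_{0}>0$). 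The same binomial/H\"older manipulation used in \eqref{binom}--\eqref{Holdercomp} then yields a modulus-of-continuity estimate $\|D(u+sT(u))-D(u)\|_{\mathcal{H}^{-1}_{p}}\le \omega(|s|)$ with $\omega(|s|)\to 0$ as $s\to 0$ independent of $u\in M$, finishing the proof.
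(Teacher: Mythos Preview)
Your argument is correct, but it proceeds along a somewhat different decomposition than the paper's. The paper differentiates $B(\sigma(u,s))$ using $B'=b$ and the chain rule on $\sigma(u,s)=\lambda(u+sT(u))(u+sT(u))$, obtaining
\[
B(\sigma(u,t))-B(u)=\int_0^t \langle b(\sigma(u,s)),\partial_s\sigma(u,s)\rangle\,ds,
\]
and then expands $\partial_s\sigma(u,s)=T(u)+R(u,s)$ by exploiting $\lambda(u)=1$ and $\langle\lambda'(u),T(u)\rangle=0$ on $M$; the remainder $r(u,s)$ thus involves $b(\sigma(u,s))-b(u)$ together with $R(u,s)$, and its uniform smallness relies on the uniform continuity of $\lambda$, $\lambda'$ and $b$. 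You instead recognize $B(\sigma(u,\cdot))=F(u+\cdot\,T(u))$ with $F'=D$, which gives directly
\[
B(\sigma(u,t))-B(u)=\int_0^t \langle D(u+sT(u)),T(u)\rangle\,ds,
\]
so the remainder collapses to the single term $\langle D(u+sT(u))-D(u),T(u)\rangle$. This is cleaner, but the price is that you must establish uniform continuity of $D$ not merely on $M$ (as in the paper) but on the whole tube $\{u+sT(u):u\in M,\ |s|\le t_0\}$; your justification via boundedness of the tube, the lower bound $\|u+sT(u)\|\ge\gamma_0$ controlling the prefactor $p\alpha/\int x_1|\nabla_\M(\cdot)|^p\,d\sigma$ in the general formula for $D$, and the binomial/H\"older estimates of \eqref{binom}--\eqref{Holdercomp} is sound. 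Both routes ultimately identify the leading term via $\langle D(u),\mathcal{J}(D(u))\rangle=\|D(u)\|^2$; the paper reaches it through $\langle b(u),T(u)\rangle$, you through $\langle D(u),T(u)\rangle$ and the observation $\langle D(u),u\rangle=0$, which are equivalent computations on $M$.
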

\begin{proof}
Since $\sigma(u,0)=u$, then $B(u)=B(\sigma(u,0))$. By the definitions of functional $B$ in \eqref{BB} and
the operator $b$ in \eqref{LB}, we have for any $v\in \mathcal{H}^{1,(\frac{N-1}{p},\frac{N}{p})}_{p,0}(\B)$,
\[
  <B^\prime(u), v>=<b(u),v>.
\]
Hence,
\[
  B(\sigma(u,t))-B(u)=\int_0^t <b(\sigma(u,s)),\partial_s \sigma(u,s)> ds.
\]
Due to the fact that $<\lambda^\prime(u), T(u)>=0$ in \eqref{lambdapT} and $\lambda(u)=1$ on $M$ by \eqref{M}, one can derive
\begin{align*}
&\partial_s \sigma(u,s)
=\partial_s(\lambda(u+sT(u))(u+sT(u)))\\
=&<\lambda^\prime(u+sT(u)),T(u)>(u+sT(u))+\lambda(u+sT(u))T(u)\\
=&<\lambda^\prime(u+sT(u))-\lambda^\prime(u),T(u)>(u+sT(u))+(\lambda(u+sT(u))-\lambda(u))T(u)+T(u)\\
:=&R(u,s)+T(u),
\end{align*}
where \[
 R(u,s)=<\lambda^\prime(u+sT(u))-\lambda^\prime(u),T(u)>(u+sT(u))+(\lambda(u+sT(u))-\lambda(u))T(u).
\]

Because $T$ is bounded on $M$, and both $\lambda(u)$ and $\lambda^\prime(u)$ are uniformly
continuous, we have $\lim_{s\to 0} R(u,s)=0$ uniformly on $M$.
Therefore,
\begin{align*}
B(\sigma(u,t))-B(u)&=\int_0^t <b(\sigma(u,s)),R(u,s)+T(u)> ds\\&:=\int_0^t <b(u),T(u)>+r(u,s) ds
\end{align*}
where $ r(u,s)=<b(\sigma(u,s))-b(u),R(u,s)+T(u)>+<b(u),R(u,s)>$.

Since $b$ is uniformly continuous as proved in Lemma \ref{Bsprop}, and the properties that
$\lim_{s\to 0} \sigma(u,s)=u$ and $\lim_{s\to 0} R(u,s)=0$ leads to that
\[
  \lim_{s\to 0} r(u,s)=0
\] uniformly on $M$.
Moreover, a direct computation implies that
\begin{align*}
&<b(u), T(u)>\\
=&<b(u), \mathcal{J}(D(u))>-\frac{<b(u),u>\int_\M x_1|\nabla_\M u|^{p-2}\nabla_\M u\cdot \nabla_\M(\mathcal{J}(D(u)))d\sigma}{\int_\M x_1|\nabla_\M u|^pd\sigma}\\
=&<D(u),\mathcal{J}(D(u))>
=\|D(u)\|^2_{\mathcal{H}^{-1,(-\frac{N-1}{p},\frac{N}{p})}_{p}}
\end{align*}
which verifies \eqref{Bsigma}.
\end{proof}
Consider the level set, for $\beta>0$
\begin{equation}\label{bigphibeta}
\Phi_\beta=\{u\in M~|~ B(u)\geq \beta\}.
\end{equation}
Then we have the following deformation result
\begin{Lem}\label{deform}
Let $\beta>0$ be fixed. Assume that there exists an open set $U\subset M$ such that for some constants $\delta>0$,
$0<\rho<\beta$, it holds that
\[
  \|D(u)\|_{\mathcal{H}^{-1,(-\frac{N-1}{p},-\frac{N}{p})}_{p}}\geq \delta \quad \textup{if}\quad u\in V_\rho=\{u\in M~|~u\notin U, |B(u)-\beta|\leq \rho\}.
\]
Then there exists $\varepsilon >0$ and an operator $\eta_\varepsilon$ such that
\begin{itemize}
\item[\textup{(i)}] $\eta_\varepsilon$ is odd and continuous
\item[\textup{(ii)}] $\eta_\varepsilon(\Phi_{\beta-\varepsilon}-U)\subset \Phi_{\beta+\varepsilon}$.
\end{itemize}
\end{Lem}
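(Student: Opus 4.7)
The strategy is to construct $\eta_\varepsilon$ by truncating the flow $\sigma(u,t)$ from \eqref{sigma}: on the region where the gradient bound $\|D(u)\|\ge\delta$ is available, I will let the flow run for a positive time and invoke Lemma~\ref{rus} to raise the value of $B$; elsewhere a continuous cutoff will fade the deformation down to the identity. First I would fix $t_1\in(0,t_0]$ so that $|r(u,s)|\le\delta^2/2$ for all $u\in M$ and $s\in[0,t_1]$ (possible by the uniform convergence $r\to 0$ in Lemma~\ref{rus}), and then set $\varepsilon:=\tfrac14\min(t_1\delta^2,\rho)$.

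Next I would introduce the closed disjoint sets
\[
W_1=\{u\in M\setminus U:|B(u)-\beta|\le\rho/2\},\qquad W_2=\{u\in M:|B(u)-\beta|\ge\rho\},
\]
and define the Urysohn-type cutoff $g(u):=d(u,W_2)/(d(u,W_1)+d(u,W_2))$, which is continuous on $M$ with $g\equiv 1$ on $W_1$ and $g\equiv 0$ on $W_2$. Since $B$ is even, $W_2$ is symmetric; assuming $U=-U$ (as is natural in the Lusternik--Schnirelman context, where $U$ arises as a symmetric neighborhood of the critical set, or else after replacing $U$ by $U\cup(-U)$, which only shrinks the gradient-bound set $V_\rho$), $W_1$ is also symmetric, hence $g$ is even. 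Setting $\eta_\varepsilon(u):=\sigma(u,g(u)t_1)$, continuity of $\eta_\varepsilon$ is immediate from that of $g$ and $\sigma$, while oddness follows from the oddness of $\sigma(\cdot,s)$ in its first argument combined with the evenness of $g$.

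To verify $\eta_\varepsilon(\Phi_{\beta-\varepsilon}\setminus U)\subset\Phi_{\beta+\varepsilon}$, take $u\in\Phi_{\beta-\varepsilon}\setminus U$ and split into three cases. If $B(u)\ge\beta+\rho$, then $u\in W_2$ forces $g(u)=0$, so $\eta_\varepsilon(u)=u$ lies in $\Phi_{\beta+\varepsilon}$. If $|B(u)-\beta|\le\rho/2$, then $u\in W_1$ so $g(u)=1$, and Lemma~\ref{rus} together with the gradient bound yields
\[
B(\eta_\varepsilon(u))=B(u)+\int_0^{t_1}\bigl(\|D(u)\|^2+r(u,s)\bigr)\,ds\ge B(u)+t_1\delta^2/2\ge (\beta-\varepsilon)+2\varepsilon=\beta+\varepsilon.
\]
In the remaining case $\rho/2<B(u)-\beta<\rho$ (the possibility $B(u)-\beta<-\rho/2$ is excluded by $\varepsilon<\rho/2$), one has $u\in V_\rho$, so the integrand $\|D(u)\|^2+r(u,s)\ge\delta^2/2>0$ on $[0,t_1]$ ensures $B(\eta_\varepsilon(u))\ge B(u)>\beta+\rho/2>\beta+\varepsilon$, regardless of the value of $g(u)$.

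The principal delicate point is ensuring $g$ is even while retaining the gradient bound on its support, which forces the symmetry condition $U=-U$ (either as an implicit assumption or via the $U\cup(-U)$ substitution). The balance in the parameter choice is that $\varepsilon$ must simultaneously be dominated by the $B$-increment $t_1\delta^2/2$ achievable on $W_1$ and lie strictly below $\rho/2$, so that the intermediate band $(\beta+\rho/2,\beta+\rho)$ sits safely above the target level $\beta+\varepsilon$ and the non-unit values of $g$ on the transition zone do not jeopardize the inclusion.
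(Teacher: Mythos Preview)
Your argument is correct and follows a genuinely different route from the paper. The paper constructs $\eta_\varepsilon$ via a \emph{stopping time}: for $u\in V_\varepsilon=\{u\in M:u\notin U,\ |B(u)-\beta|\le\varepsilon\}$ it sets $t_\varepsilon(u)=\min\{t\ge 0:B(\sigma(u,t))=\beta+\varepsilon\}$ (which is well defined and $\le t_1$ by the same increment estimate you used), defines $\eta_\varepsilon(u)=\sigma(u,t_\varepsilon(u))$ there, and lets $\eta_\varepsilon$ be the identity on $\Phi_{\beta-\varepsilon}\setminus(U\cup V_\varepsilon)$; continuity is then argued from the continuity of $t_\varepsilon$, and on that piece one has exactly $B(\eta_\varepsilon(u))=\beta+\varepsilon$. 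Your Urysohn cutoff $g$ is an alternative way to glue the flow to the identity: it sidesteps the verification that the stopping time is continuous (in exchange for the three-case level-set analysis, where you only obtain $B(\eta_\varepsilon(u))\ge\beta+\varepsilon$ rather than equality), and it makes the oddness transparent once $g$ is even. You are also right to flag the symmetry hypothesis $U=-U$: the paper's stopping-time construction needs it equally (so that $V_\varepsilon$ is symmetric and $t_\varepsilon$ is even), and the sole application in the paper takes $U=\emptyset$, where the issue does not arise.
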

\begin{proof}
Take $t_0$ and $r(u,s)$ as in Lemma \ref{rus}. Consider $t_1\in [0,t_0]$, such that for $s\in [-t_1,t_1]$
$|r(u,s)|\leq \frac{1}{2}\delta^2$,
for all $u\in M$.
Then for $u\in V_\rho$ and $t\in [0,t_1]$, we have
\begin{align}\label{3-5}
B(\sigma(u,t))-B(u)&=\int_0^t (\|D(u)\|_{\mathcal{H}^{-1,(-\frac{N-1}{p},-\frac{N}{p})}_{p}}^2+r(u,s))ds\notag \\ &\geq \int_0^t (\delta^2-\frac{1}{2}\delta^2) ds=\frac{1}{2}\delta^2 t.
\end{align}
Choosing $\varepsilon=\min\{\rho,\frac{1}{4}\delta^2 t_1\}$. If $u\in V_\rho\cap \Phi_{\beta-\varepsilon}$, then
$|B(u)-\beta|\leq \rho$,
and from \eqref{3-5}
we have
\begin{equation}\label{t1}
B(\sigma(u,t_1))\geq B(u)+\frac{1}{2}\delta^2 t_1 \geq \beta+\varepsilon.
\end{equation}
By Lemma \ref{rus}, fixing  $u\in V_\rho$, the functional $B(\sigma(u,\cdot))$ is increasing in some interval $[0,s_0)\subset [0,t_1)$.
Then for
\[
  u\in V_\varepsilon =\{u\in M ~|~ u\notin U, |B(u)-\beta|\leq \varepsilon \}
\]
the functional
\begin{equation}\label{tepsilon}
 t_\varepsilon(u)=\min\{t\geq 0|B(\sigma(u,t))=\beta+\varepsilon\}
\end{equation}
is well defined. The inequality \eqref{t1} implies  $0<t_\varepsilon(u)\leq t_1$. The continuity of $\sigma(\cdot,s)$ and the continuity
of $B(\cdot)$ induce that $t_\varepsilon(u)$ is continuous in $V_\varepsilon$.

Define
\begin{equation}
\eta_\varepsilon(u)=\big\{\begin{array}{ll}
\sigma(u,t_\varepsilon(u)) &\textup{if}\quad  u\in V_\varepsilon\\
u& \textup{if}\quad u\in \Phi_{\beta-\varepsilon}-(U\cup V_\varepsilon)
\end{array}
\end{equation}
such that
\[
  \eta_\varepsilon : \Phi_{\beta-\varepsilon}-U \to \Phi_{\beta+\varepsilon}.
\]
Since $\sigma(u,t)$ is odd and uniformly continuous w.r.t $u$, then we have $\eta_\varepsilon (u)$ is odd and continuous.
\end{proof}
We now prove the existence of a sequence of critical values and critical points by applying a min-max argument.
For each $k\in \N$, consider the class
\begin{equation}\label{Ak}
\mathcal{A}_k=\{A\subset M~|~ A~\textup{closed}, A=-A,\gamma(A)\geq k \}
\end{equation}
where $\gamma$ is the genus as in Definition \ref{genus}.
\begin{Lem}\label{seq}
Let $\mathcal{A}_k$ be defined in \eqref{Ak}, define $\beta_k$ as follows
\begin{equation}\label{betak}
\beta_k=\sup_{A\in \mathcal{A}_k}\min_{u\in A}B(u),
\end{equation}
then for each $k$, $\beta_k>0$, and there exists a sequence $\{u_{k_j}\}\subset M$ such that as $j\to \infty$ it holds that
\begin{equation}\label{ukj}
\bigg\{\begin{array}{ll}
\textup{(i)} ~ B(u_{k_j}) \to \beta_k,\\
\textup{(ii)}~ D(u_{k_j}) \to 0.
\end{array}
\end{equation}
\end{Lem}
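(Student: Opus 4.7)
The plan is to argue in two stages: first exhibit a symmetric compact subset of $M$ whose genus equals $k$ and on which $B$ is bounded below by a positive constant, thereby securing $\beta_k>0$; and then invoke the deformation Lemma \ref{deform} in a standard Lusternik-Schnirelman type contradiction argument to produce the approximating sequence satisfying \eqref{ukj}.

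To see that $\beta_k > 0$, I would pick any $k$-dimensional subspace $V_k \subset \mathcal{H}^{1,(\frac{N-1}{p},\frac{N}{p})}_{p,0}(\M)$ and consider its unit sphere $\Sigma_k$. By \eqref{deflam}, the radial projection $u \mapsto \lambda(u)\,u$ is an odd homeomorphism of $\mathcal{H}^{1,(\frac{N-1}{p},\frac{N}{p})}_{p,0}(\M)\setminus\{0\}$ onto $M$, so $S_k := \{\lambda(u)u : u\in \Sigma_k\}$ is a compact symmetric subset of $M$. Since $\Sigma_k$ is (odd-)homeomorphic to the sphere $S^{k-1}\subset\R^k$, Proposition \ref{genusp}(2) and (6) give $\gamma(S_k)=k$, whence $S_k\in \mathcal{A}_k$. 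Because $B$ is continuous and $B(u)>0$ for every $u\neq 0$, it attains a strictly positive minimum on the compact $S_k$, and thus $\beta_k \geq \min_{S_k} B > 0$.

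For the existence of a sequence $\{u_{k_j}\}\subset M$ verifying \eqref{ukj}, I would argue by contradiction. Suppose no such sequence exists. Then there exist $\rho \in (0,\beta_k)$ and $\delta>0$ such that $\|D(u)\|_{\mathcal{H}^{-1,(-\frac{N-1}{p},-\frac{N}{p})}_p}\geq \delta$ whenever $u\in M$ satisfies $|B(u)-\beta_k|\leq \rho$. Apply Lemma \ref{deform} with $\beta=\beta_k$ and $U=\emptyset$ to obtain $\varepsilon\in(0,\rho)$ and an odd continuous $\eta_\varepsilon$ with $\eta_\varepsilon(\Phi_{\beta_k-\varepsilon})\subset \Phi_{\beta_k+\varepsilon}$. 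By the definition of $\beta_k$ as a supremum, choose $A\in \mathcal{A}_k$ with $\min_{u\in A}B(u) > \beta_k-\varepsilon$, i.e., $A\subset \Phi_{\beta_k-\varepsilon}$. Set $A':=\overline{\eta_\varepsilon(A)}$; then $A'$ is closed and symmetric, and since $\Phi_{\beta_k+\varepsilon}$ is closed by continuity of $B$, we have $A'\subset \Phi_{\beta_k+\varepsilon}$. Moreover, applying Proposition \ref{genusp}(1) and (3) to the odd continuous map $\eta_\varepsilon:A\to A'$ yields $\gamma(A')\geq \gamma(A)\geq k$, so $A'\in \mathcal{A}_k$. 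This produces $\min_{u\in A'}B(u)\geq \beta_k+\varepsilon$, contradicting the definition of $\beta_k$ as the supremum. Hence the desired sequence must exist.

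The main obstacle is purely technical: ensuring that the class $\mathcal{A}_k$ is stable under the deformation, so that the image retains a genus at least $k$ and remains closed. This is resolved by passing to the closure of $\eta_\varepsilon(A)$ and combining the monotonicity and continuity properties of $\gamma$ listed in Proposition \ref{genusp} with the continuity of $B$.
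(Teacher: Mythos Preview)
Your proposal is correct and follows essentially the same approach as the paper: positivity of $\beta_k$ via nonemptiness of $\mathcal{A}_k$ (the paper argues this through $\gamma(M)=+\infty$, you via an explicit compact $S_k$), followed by the identical contradiction argument applying Lemma \ref{deform} with $U=\emptyset$ and using the genus monotonicity from Proposition \ref{genusp}. Your extra care in passing to the closure $\overline{\eta_\varepsilon(A)}$ to ensure membership in $\mathcal{A}_k$ is a minor refinement the paper omits, but the strategy is the same.
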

\begin{proof}
By Definition \ref{genus}, for the manifold $M$ as in \eqref{M}, $\gamma(M)=+\infty$. Hence it holds that $\mathcal{A}_k\neq \emptyset$ for all $k>0$.
For each $k$, given $A\in \mathcal{A}_k$, we have
$\min_{u\in A} B(u)>0$, which implies that $\beta_k >0$ for all $k$.

Assume there is no sequence in $M$ verifying the conditions \eqref{ukj}, then there must exists constants $\delta>0$, $\rho>0$
such that
\[
  \|D(u)\|_{\mathcal{H}^{-1,(-\frac{N-1}{p},-\frac{N}{p})}_{p}}\geq \delta \quad \textup{if}\quad u\in \{u\in M ~|~ |B(u)-\beta_k|\leq \rho\}.
\]
Without loss of generality, assume $\delta<\beta_k$. Applying Lemma \ref{deform} with $U=\emptyset$,
there exists $\varepsilon >0$ and an odd continuous mapping $\eta_\varepsilon$ such that
\[
  \eta_\varepsilon(\Phi_{\beta_k-\varepsilon})\subset \Phi_{\beta_k+\varepsilon}.
\]
By the definition of $\beta_k$ in \eqref{betak}, there exists a set $A_\varepsilon \in \mathcal{A}_k$ such that
\[
  B(u)\geq \beta_k -\varepsilon \quad \textup{in}\quad A_\varepsilon
\]
namely, $A_\varepsilon \subset \Phi_{\beta_k-\varepsilon}$.
Then $B(u)\geq \beta_{k+\varepsilon}$ in $\eta_\varepsilon(A_\varepsilon)$. Since $A_\varepsilon\in \mathcal{A}_k$, then $\gamma(A_\varepsilon)\geq k$.
By Proposition \ref{genusp}, and the fact that $\eta_\varepsilon$ is odd and continuous, we get
\[
  \gamma(\eta_\varepsilon(A_\varepsilon)) \geq k
\]
which implies that
\[
  \eta_\varepsilon(A_\varepsilon)\in \mathcal{A}_k
\]
This is a contradiction with the definition of $\beta_k$ in \eqref{betak}. In this way, for each $k$, we obtain the sequence $\{u_{k_j}\}\subset M$
verifying the conditions \eqref{ukj}.
\end{proof}
To the end, we need the following local $(PS)$ condition.
\begin{Lem}\label{PS}
Let $\{u_j\}\subset M$, $\beta>0$ such that as $j\to \infty$
\begin{equation}\label{uj}
\bigg\{\begin{array}{ll}
\textup{(i)} ~ B(u_{j}) \to \beta,\\
\textup{(ii)}~ D(u_{j}) \to 0, \quad \textup{in} \quad \mathcal{H}^{-1,(-\frac{N-1}{p},-\frac{N}{p})}_{p}(\B).
\end{array}
\end{equation}
Then there exists a convergent subsequence of $\{u_j\}$ in $M$.
\end{Lem}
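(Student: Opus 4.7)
The plan is to combine compactness of the lower-order operators with the nonlinear monotonicity trick already developed in the proof of Lemma \ref{PScond}. Since $u_j \in M$ gives $\|u_j\|^p_{\mathcal{H}^{1,(\frac{N-1}{p},\frac{N}{p})}_{p,0}} = p\alpha$, the sequence is bounded; passing to a subsequence I may assume $u_j \rightharpoonup u$ weakly in $\mathcal{H}^{1,(\frac{N-1}{p},\frac{N}{p})}_{p,0}(\M)$. Lemma \ref{Bsprop} then yields $B(u_j) \to B(u) = \beta > 0$ and $b(u_j) \to b(u)$ strongly in the dual space. In particular $u \not\equiv 0$, and the Lagrange multiplier
\[
\mu_j := \frac{\langle b(u_j), u_j\rangle}{\int_\M x_1|\nabla_\M u_j|^p\, d\sigma} = \frac{B(u_j)}{\alpha} \longrightarrow \frac{\beta}{\alpha} =: \mu > 0.
\]

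The hypothesis $D(u_j) \to 0$, unfolded on $M$ using the formula for $D$, asserts that for every test function $v$
\[
\langle b(u_j), v\rangle - \mu_j \int_\M x_1|\nabla_\M u_j|^{p-2}\nabla_\M u_j \cdot \nabla_\M v\, d\sigma = o(1)\|v\|_{\mathcal{H}^{1,(\frac{N-1}{p},\frac{N}{p})}_{p,0}}.
\]
Taking $v = u_j - u$, the term $\langle b(u_j), u_j - u\rangle$ vanishes in the limit because $b(u_j) \to b(u)$ strongly and $u_j - u \rightharpoonup 0$. Subtracting the analogous identity in which $|\nabla_\M u_j|^{p-2}\nabla_\M u_j$ is replaced by $|\nabla_\M u|^{p-2}\nabla_\M u$ (the pairing of this fixed dual element against $u_j - u$ also tends to zero by weak convergence), I expect to reach
\[
\int_\M x_1\bigl(|\nabla_\M u_j|^{p-2}\nabla_\M u_j - |\nabla_\M u|^{p-2}\nabla_\M u\bigr)\cdot(\nabla_\M u_j - \nabla_\M u)\,d\sigma \longrightarrow 0.
\]

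From this point on, I would reuse verbatim the second half of the proof of Lemma \ref{PScond}: the pointwise monotonicity inequality forces $\nabla_\M u_j \to \nabla_\M u$ a.e.\ in $\textup{int}\,\M$ along a further subsequence; then the Brezis--Lieb identity of Lemma \ref{BLR} applied component-wise to $\nabla_\M u_j$, together with Egorov and an equi-integrability bound on $\int_E x_1|\nabla_\M u_j|^p\, d\sigma$ obtained from the integrand above via H\"older's inequality, upgrades the a.e.\ convergence to norm convergence $\|\nabla_\M u_j\|_{L_p^{(\frac{N-1}{p},\frac{N}{p})}} \to \|\nabla_\M u\|_{L_p^{(\frac{N-1}{p},\frac{N}{p})}}$. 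Weak convergence together with norm convergence in the uniformly convex space $\mathcal{H}^{1,(\frac{N-1}{p},\frac{N}{p})}_{p,0}(\M)$ yields strong convergence $u_j \to u$, and then $\frac{1}{p}\int_\M x_1|\nabla_\M u|^p\,d\sigma = \alpha$, so $u \in M$. The main obstacle is the nonlinear passage in the middle step; what makes it work is that $\mu > 0$ (guaranteed by $\beta > 0$), so $\mu_j$ stays bounded away from zero and the strong convergence of $b(u_j)$ can be transferred to control the nonlinear $p$-Laplacian term.
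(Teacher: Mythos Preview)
Your proposal is correct and follows exactly the approach the paper intends: the paper's own proof of Lemma \ref{PS} is the single sentence ``Apply the similar process in the proof of Lemma \ref{PScond},'' and what you have written is precisely that adaptation spelled out in detail. Your observation that $\beta>0$ forces $\mu_j\to\mu>0$, which is what allows the monotonicity integral to be isolated from $D(u_j)\to 0$, is the one extra ingredient needed beyond Lemma \ref{PScond}, and you handle it correctly.
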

\begin{proof}
Apply the similar process in the proof of Lemma \ref{PScond}.
\end{proof}
Combining Lemma \ref{seq} and Lemma \ref{PS}, then for each $k$, we have a sequence $u_{k_j}\subset M$ such that
$u_{k_j} \to u_k$ in  $M$ which gives that $u_k \in M$ with $B(u_k)=\beta_k$ and $D(u_k)=0$.

 This induces that
for any $\varphi\in \mathcal{H}^{1,(\frac{N-1}{p},\frac{N}{p})}_{p,0}(\M)$, and for each $k\in \N$
\[
  \int_\M x_1|\nabla_\M u_k|^{p-2}\nabla_\M u_k\cdot\nabla_\M \varphi~ d\sigma=\lambda_k \int_\M x_1^{p+1}x_2^p |u_k|^{p-2}u_k \varphi~ d\sigma
\]
by setting $\lambda_k=\frac{\alpha}{\beta_k}$. This completes the proof of Theorem \ref{T1}.

%\begin{Rem}
%For the case $p=2$ in \eqref{1-1},
%the problem involving the linear Fuchs type operator $-x_1^{-2}\textup{div}_\B(\nabla_\B u)$ turns to be linear
%which is a simpler and similar case of \eqref{1-1}.
%\end{Rem}

\subsection{The proof of Theorem \ref{C1}}

Consider $\{E_k\}$ be a sequence of linear subspaces of $\mathcal{H}^{1,(\frac{N-1}{p},\frac{N}{p})}_{p,0}(\M)$, such that $E_k \subset E_{k+1}$; $\overline{\mathcal{L}(\cup_k E_k)}=\mathcal{H}^{1,(\frac{N-1}{p},\frac{N}{p})}_{p,0}(\M)$ and $\textup{dim}E_k =k$.
Define \begin{equation}\label{betatil}
  \tilde{\beta_k}=\sup_{A\in \mathcal{A}_k}\inf_{u\in A\cap E_{k-1}^c} B(u)
\end{equation}
where $E_k^c$ is the linear and topological complementary of $E_k$. It is obvious that
$\tilde{\beta}_k\geq \beta_k >0$.

Hence it is sufficient to show that $\lim_{k\to \infty} \tilde{\beta_k}=0$, which will be verified by
contradiction as follows. Assume for some positive constant $\gamma>0$, we have $\tilde{\beta_k}>\gamma>0$
for all $k\in \N$. Then for each $k\in \N$, there exists $A_k\in \mathcal{A}_k$ such that
\[
  \tilde{\beta}_k \geq \inf_{u\in A_k\cap E_{k-1}^c} B(u)>\gamma.
\]
Then there exists $u\in A_k\cap E_{k-1}^c$ such that
$\tilde{\beta}_k \geq B(u_k)>\gamma$.

In this way, we have formed a sequence $\{u_k\}\subset M$, such that $B(u_k)>\gamma$ for all $k\in \N$.
Since $\{u_k\}\subset M$, as before we know that $\{u_k\}$ is bounded in $\mathcal{H}^{1,\frac{N}{p}}_{p,0}(\B)$,
which implies that there exist $v\in \mathcal{H}^{1,(\frac{N-1}{p},\frac{N}{p})}_{p,0}(\M)$ such that
$ u_k \rightharpoonup v$ in $\mathcal{H}^{1,(\frac{N-1}{p},\frac{N}{p})}_{p,0}(\M)$ and $u_k \to v$ in $L_p^{(\frac{N-p-1}{p},\frac{N-p}{p})}$

Hence we have
\begin{equation}\label{Bgam}
B(v)=\frac{1}{p}\|v\|^p_{L^{\frac{N}{p}-1}_p} >\gamma.
\end{equation}
But the fact that $u_k\in E_{k-1}^c$ implies $v=0$ which induces the contradiction with \eqref{Bgam},
and then finishes the proof of Theorem \ref{C1}.

\addcontentsline{toc}{section}{References}

% ------------------------------------------------------------------------

% ------------------------------------------------------------------------
\end{document}